\documentclass[reqno]{amsart}
\usepackage[foot]{amsaddr}
\usepackage{amssymb,amsmath,amsfonts,mathptmx,cite,amsthm,mathrsfs,url}
\usepackage{tikz-cd}

\usepackage{rotating}

\theoremstyle{plain}
\newtheorem{theorem}{Theorem}[section]
\newtheorem{proposition}[theorem]{Proposition}
\newtheorem{lemma}[theorem]{Lemma}
\newtheorem{corollary}[theorem]{Corollary}

\theoremstyle{remark}
\newtheorem*{remark}{Remark}
\newtheorem{example}[theorem]{Example}

\DeclareSymbolFont{rsfscript}{OMS}{rsfs}{m}{n}
\DeclareSymbolFontAlphabet{\mathrsfs}{rsfscript}

\newcommand{\mA}{\mathcal{A}}

\DeclareMathOperator{\alf}{alph}
\DeclareMathOperator{\var}{var}

\numberwithin{equation}{section}

\newcommand{\ais}{ai-semi\-ring}

\def\fb{finitely based}
\def\nfb{non\-finitely based}

\predisplaypenalty=0

\begin{document}
\title[Identities of triangular Boolean matrices]{Identities of triangular Boolean matrices}

\author[M. V. Volkov]{Mikhail V. Volkov}
\address{{\normalfont 620075 Ekaterinburg, Russia}}
\email{m.v.volkov@urfu.ru}

\begin{abstract}
We give a combinatorial characterization of the identities holding in the semiring of all upper triangular Boolean $n\times n$-matrices and apply the characterization to computational complexity of identity checking, finite axiomatizability of equational theories, and algebraic descriptions of certain classes of recognizable languages.
\end{abstract}

\maketitle

\section*{Introduction}
A \emph{Boolean} matrix is a matrix with entries $0$ and $1$ only. The addition and multiplication of such matrices are as usual, except that addition and multiplication of the entries are defined as $x+y:=\max\{x,y\}$ and $x\cdot y:=\min\{x,y\}$. (Here and below the sign $:=$ stands for equality by definition; thus, $A:=B$ means that $A$ is defined as $B$.) For each $n$, the set of all Boolean $n\times n$-matrices forms an additively idempotent semiring under matrix addition and multiplication.  Here an \emph{additively idempotent semiring} (\ais, for short) is an algebra $(S,+,\cdot)$ with binary addition $+$ and multiplication $\cdot$ such that $(S,+)$ is a commutative and idempotent semigroup, $(S,\cdot)$ is a semigroup, and multiplication distributes over addition on the left and right.

A Boolean matrix $\bigl(\alpha_{ij}\bigr)_{n\times n}$ is called \emph{upper triangular} if $\alpha_{ij}=0$ for all $1\le j<i\le n$. The set $T_n$ of all upper triangular Boolean $n\times n$-matrices is closed under matrix addition and multiplication so it forms a subsemiring of the \ais{} of all  Boolean $n\times n$-matrices. Our aim is to characterize the semiring identities of the \ais\ $(T_n,+,\cdot)$. As a consequence, we get a characterization of the semigroup identities of the semigroup $(T_n,\cdot)$.

Our characterizations of identities of $(T_n,+,\cdot)$ and $(T_n,\cdot)$ are presented in Sect.~\ref{sec:main}; their proofs only use notions detailed in Sect.~\ref{sec:prelim} and the addition and multiplication rules for $n\times n$-matrices:
\[
\bigl(\alpha_{ij}\bigr)_{n\times n}+\bigl(\beta_{ij}\bigr)_{n\times n}:=\bigl(\alpha_{ij}+\beta_{ij}\bigr)_{n\times n}, \qquad \bigl(\alpha_{ij}\bigr)_{n\times n}\cdot\bigl(\beta_{ij}\bigr)_{n\times n}:=\left(\sum_{k=1}^n\alpha_{ik}\beta_{kj}\right)_{n\times n}.
\]
Section~\ref{sec:applications} contains applications of our main results. The applications deal with the computational complexity of identity checking, finite axiomatizability of equational theories, and algebraic descriptions of certain classes of recognizable languages. While Sects.~\ref{sec:prelim} and~\ref{sec:main} are truly self-contained, understanding material of Sect.~\ref{sec:applications} requires some background specified at the beginning of subsections.

\section{Preliminaries}
\label{sec:prelim}
To state our results, we need several combinatorial and algebraic concepts. Most are standard, but some are less known or even new. We provide a self-contained introduction for the reader's convenience. When doing so, we also set up our notation.

\subsection{Words and subwords}
A (\emph{semigroup}) \emph{word} is a finite sequence of symbols, called \emph{variables}. Sometimes we employ the \emph{empty word}, that is, the empty sequence. In particular, we adopt the convention that for a variable $x$
and a nonnegative integer $m$, the expression $x^m$ denotes $\underbrace{x\cdots x}_{\text{$m$ times}}$ if $m>0$ and the empty word if $m=0$.

We denote words by lowercase boldface letters. If $\mathbf{w}=w_1\cdots w_k$, where $w_1,\dots,w_k$ are variables, possibly with repeats, then the set $\{w_1,\dots,w_k\}$ is denoted by $\alf(\mathbf{w})$ and called the \emph{alphabet} of $\mathbf{w}$ and the number $k$ is denoted by $|\mathbf{w}|$ and called the \emph{length} of $\mathbf{w}$. If $\mathbf{w}$ is empty, then we define $\alf(\mathbf{w}):=\varnothing$ and $|\mathbf{w}|:=0$.

Words are multiplied by concatenation: the product of two words $\mathbf{w}=w_1\cdots w_k$ and $\mathbf{w}'=w'_1\cdots w'_\ell$, is defined as $\mathbf{ww}':=w_1\cdots w_kw'_1\cdots w'_\ell$. Clearly, concatenation is associative, so we need no parentheses when writing down products of several words. If $\mathbf{w}$ is any word and $\mathbf{w'}$ is empty, then  $\mathbf{ww}':=\mathbf{w}$ and $\mathbf{w}'\mathbf{w}:=\mathbf{w}$. If $\mathbf{w}=\mathbf{pfs}$ for words $\mathbf{p}$, $\mathbf{f}$ and $\mathbf{s}$ (some of which may be empty), we say that $\mathbf{p}$ is a \emph{prefix}, $\mathbf{f}$ is a \emph{factor}, and $\mathbf{s}$ is a \emph{suffix} of the word $\mathbf{w}$.

Let $\mathbf{u}=u_1\cdots u_k$ and $\mathbf{v}$ be words. We say that $\mathbf{u}$ \emph{occurs as a  subword in $\mathbf{v}$} if there exist words $\mathbf{v}_1, \mathbf{v}_2, \dots,  \mathbf{v}_k, \mathbf{v}_{k+1}$ (some of which may be empty) such that
\begin{equation}
\label{eq:occurrence}
\mathbf{v} = \mathbf{v}_1u_1\mathbf{v}_2\cdots \mathbf{v}_ku_k\mathbf{v}_{k+1}.
\end{equation}
Thus, $\mathbf{u}$ as a sequence of variables is a subsequence in $\mathbf{v}$. We refer to any decomposition of the form \eqref{eq:occurrence} as an \emph{occurrence} of $\mathbf{u}$ as a  subword in $\mathbf{v}$. We say that $\mathbf{u}$ occurs in a word $\mathbf{v}$ \emph{with gaps} $G_1,G_2,\dots,G_{k+1}$ if there is an occurrence \eqref{eq:occurrence} of $\mathbf{u}$ as a  subword in $\mathbf{v}$ such that $\alf(\mathbf{v}_\ell)=G_\ell$ for each $\ell=1,2,\dots,k+1$.

\begin{example}
The three occurrences of the word $x$ as a subword in $x^2yx$ are:
\[
x^2yx=\begin{cases}
\underline{x}\cdot xyx,&\text{with gaps $\varnothing,\{x,y\}$;}\\
x\cdot\underline{x}\cdot yx,&\text{with gaps $\{x\},\{x,y\}$};\\
x^2y\cdot\underline{x},&\text{with gaps $\{x,y\},\varnothing$.}
\end{cases}
\]
The two occurrences of the word $xy$ as a  subword in $x^2yx$ are:
\[
x^2yx=\begin{cases}
\underline{x}\cdot x\cdot\underline{y}\cdot x,&\text{with gaps $\varnothing,\{x\},\{x\}$;}\\
x\cdot\underline{x}\underline{y}\cdot x,&\text{with gaps $\{x\},\varnothing,\{x\}$.}
\end{cases}
\]
\end{example}
We adopt the convention that the empty word occurs as a subword in every word $\mathbf{v}$ in a unique way with gap $G_1:=\alf(\mathbf{v})$.

\subsection{Semigroup identities and inequalities}
Let $\mathbf{w}=w_1\cdots w_k$ be a word and $(S,\cdot)$ a semigroup. A \emph{substitution} is any map $\varphi\colon\alf(\mathbf{w})\to S$. The \emph{value} of $\mathbf{w}$ under $\varphi$ is the element $\mathbf{w}\varphi\in S$ defined as $\mathbf{w}\varphi:=w_1\varphi\cdots w_k\varphi$.

A (\emph{semigroup}) \emph{identity} is a formal expression of the form $\mathbf{w}\approx\mathbf{w}'$ where $\mathbf{w}$ and $\mathbf{w}'$ are words. A semigroup $(S,\cdot)$ \emph{satisfies} $\mathbf{w}\approx\mathbf{w}'$ (or $\mathbf{w}\approx\mathbf{w}'$ \emph{holds} in $(S,\cdot)$) if $\mathbf{w}\varphi=\mathbf{w}'\varphi$ for every substitution $\varphi\colon\alf(\mathbf{ww'})\to S$. That is, each substitution of~elements from $S$ for the variables occurring in the words $\mathbf{w}$ or $\mathbf{w}'$ yields equal values to these words.

An \emph{ordered semigroup} is a structure $(S,\cdot,\le)$ such that $(S,\cdot)$ is a semigroup, $(S,\le)$ is a (partially) ordered set and the order is compatible with multiplication in the sense that $a\le b$ implies $ca\le cb$ and $ac\le bc$ for all $a,b,c\in S$. A (\emph{semigroup}) \emph{inequality} is a formal expression of the form $\mathbf{w}\preccurlyeq\mathbf{w}'$ where $\mathbf{w}$ and $\mathbf{w}'$ are words. We refer to $\mathbf{w}$ and $\mathbf{w}'$ as the \emph{lower} and, respectively, \emph{upper} sides of the inequality $\mathbf{w}\preccurlyeq\mathbf{w}'$. An ordered semigroup $(S,\cdot,\le)$ \emph{satisfies} $\mathbf{w}\preccurlyeq\mathbf{w}'$ (or $\mathbf{w}\preccurlyeq\mathbf{w}'$ \emph{holds} in $(S,\cdot,\le)$) if $\mathbf{w}\varphi\le\mathbf{w}'\varphi$ for every substitution $\varphi\colon\alf(\mathbf{ww'})\to S$. Obviously, an identity $\mathbf{w}\approx\mathbf{w}'$ holds in an ordered semigroup $(S,\cdot,\le)$ if and only if $(S,\cdot,\le)$ satisfies both $\mathbf{w}\preccurlyeq\mathbf{w}'$ and $\mathbf{w}'\preccurlyeq\mathbf{w}$.

\subsection{Semiring identities and inequalities}
Most of the above notions naturally extend to \ais{}s. A (\emph{semiring}) \emph{polynomial} is a finite nonempty set of semigroup words. We denote polynomials by uppercase boldface letters and write them as formal sums of their elements. Thus, $\mathbf{W}=\mathbf{w}_1+\dots+\mathbf{w}_p$ means nothing but $\mathbf{W}=\{\mathbf{w}_1,\dots,\mathbf{w}_p\}$; in particular, ordering of summands does not matter.

The above use of the term `polynomial' is quite common in the theory of \ais{}s. To avoid possible confusion, we emphasize that, unlike the `usual' polynomials (which occasionally show up in Subsect.~\ref{subsec:complexity}), semiring polynomials are formed over non-commuting variables, and their monomials (that is, words) carry no coefficients

For a polynomial $\mathbf{W}=\mathbf{w}_1+\dots+\mathbf{w}_p$, we let $\alf(\mathbf{W}):=\cup_{i=1}^p\alf(\mathbf{w}_i)$. Further, we say that a word $\mathbf{u}$ of length $k$ \emph{occurs in $\mathbf{W}$ with gaps} $G_1,G_2,\dots,G_{k+1}$ if $\mathbf{u}$ occurs with gaps $G_1,G_2,\dots,G_{k+1}$ in some of the words $\mathbf{w}_i$, $i=1,\dots,p$.

If $\mathbf{W}=\mathbf{w}_1+\dots+\mathbf{w}_p$ is a polynomial, $(S,+,\cdot)$ is an \ais, and $\varphi\colon\alf(\mathbf{W})\to S$ is any substitution, then the \emph{value} $\mathbf{W}\varphi$ of $\mathbf{W}$ under $\varphi$ is defined as $\sum_{i=1}^p\mathbf{w}_i\varphi$. As the semigroup $(S,+)$ is commutative and idempotent, the value $\mathbf{W}\varphi$ is well-defined because the element $\sum_{i=1}^p\mathbf{w}_i\varphi\in S$ does not depend on how $\mathbf{W}$ is represented as a sum of words.

A (\emph{semiring}) \emph{identity} is a formal expression of the form $\mathbf{W}\approx\mathbf{W}'$ where $\mathbf{W}$ and $\mathbf{W}'$ are polynomials. An \ais{} $(S,+,\cdot)$ \emph{satisfies} $\mathbf{W}\approx\mathbf{W}'$ (or $\mathbf{W}\approx\mathbf{W}'$ \emph{holds} in $(S,+,\cdot)$) if $\mathbf{W}\varphi=\mathbf{W}'\varphi$ for every substitution $\varphi\colon\alf(\mathbf{W})\cup\alf(\mathbf{W}')\to S$.

Given any commutative and idempotent semigroup $(S,+)$, the binary relation $\le$ defined for $a,b\in S$ by
\begin{equation}\label{eq:order}
a\le b\ \text{ if and only if }\ a+b=b
\end{equation}
is known (and easy to see) to be an order on $S$ compatible with addition. If $(S,+,\cdot)$ is an \ais, the distributivity of multiplication over addition on the left and right readily implies that the order \eqref{eq:order} is also compatible with multiplication. When we talk about an order on an \ais, we always mean the order \eqref{eq:order}.

A (\emph{semiring}) \emph{inequality} is a formal expression of the form $\mathbf{W}\preccurlyeq\mathbf{W}'$ where $\mathbf{W}$ and $\mathbf{W}'$ are polynomials referred to as the \emph{lower} and, respectively, \emph{upper} sides of the inequality. An \ais{} $(S,+,\cdot)$ \emph{satisfies} $\mathbf{W}\preccurlyeq\mathbf{W}'$ (or $\mathbf{W}\preccurlyeq\mathbf{W}'$ \emph{holds} in $(S,+,\cdot)$) if for every substitution $\varphi\colon\alf(\mathbf{W})\cup\alf(\mathbf{W}')\to S$, one has $\mathbf{W}\varphi\le\mathbf{W}'\varphi$. Obviously, an identity $\mathbf{W}\approx\mathbf{W}'$ holds in an \ais{} if and only if the \ais{} satisfies both the inequality $\mathbf{W}\preccurlyeq\mathbf{W}'$ and the opposite inequality $\mathbf{W}'\preccurlyeq\mathbf{W}$.

\section{Characterizing inequalities and identities in $(T_n,+,\cdot)$ and $(T_n,\cdot)$}
\label{sec:main}

\subsection{Characterization}
\begin{theorem}
\label{thm:ais-inequality}
The \ais{} \textup[ordered semigroup\textup] of all Boolean upper triangular $n\times n$-matrices satisfies a semiring \textup[respectively, semigroup\textup] inequality if and only if every word of length $k<n$ that occurs in the lower side of the inequality with gaps $G_1,G_2,\dots,G_{k+1}$ occurs in the upper side of the inequality with gaps $G'_1,G'_2,\dots,G'_{k+1}$ such that $G'_\ell\subseteq G_\ell$ for~each $\ell=1,2,\dots,k{+}1$.
\end{theorem}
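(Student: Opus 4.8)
The plan is to translate the semiring inequality $\mathbf{W}\preccurlyeq\mathbf{W}'$ into a statement about matrix entries and then read those entries combinatorially. Since the order \eqref{eq:order} on $(T_n,+,\cdot)$ is the entrywise order and $\mathbf{W}\varphi$ is the entrywise maximum of the matrices $\mathbf{w}_i\varphi$ over the words $\mathbf{w}_i$ of $\mathbf{W}$, the inequality holds precisely when, for every substitution $\varphi\colon\alf(\mathbf{W})\cup\alf(\mathbf{W}')\to T_n$ and all $i,j$, the condition $(\mathbf{W}\varphi)_{ij}=1$ forces $(\mathbf{W}'\varphi)_{ij}=1$. The multiplication rule for Boolean matrices (sums are maxima, products are minima) shows that for a single word $\mathbf{w}=w_1\cdots w_m$ one has $(\mathbf{w}\varphi)_{ij}=1$ if and only if there is a nondecreasing sequence $i=i_0\le i_1\le\cdots\le i_m=j$ (nondecreasing because the matrices are upper triangular) with $(w_t\varphi)_{i_{t-1}i_t}=1$ for all $t$; I will call such a sequence a \emph{path}. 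The dictionary I would set up is this: the steps that strictly increase the level use off-diagonal entries and, read off in order, spell a subword $\mathbf{u}$ of $\mathbf{w}$, whereas the steps that stay at a fixed level use diagonal entries, and the variables consumed at the resting levels between consecutive jumps are exactly the gaps of this occurrence of $\mathbf{u}$. As a path visits at most $n$ distinct levels, it performs at most $n-1$ jumps, so the subword read off a path always has length $<n$; this is where the bound $k<n$ enters.

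For sufficiency I would assume the gap condition and take any substitution $\varphi$ with $(\mathbf{w}\varphi)_{ij}=1$ for some word $\mathbf{w}$ of the lower side, fix a path for it, and extract from the jumps the subword $\mathbf{u}=u_1\cdots u_k$ together with its resting levels $\ell_0<\ell_1<\cdots<\ell_k$ (so $\ell_0=i$ and $\ell_k=j$) and its gaps $G_1,\dots,G_{k+1}$, where $G_s$ is the alphabet consumed at level $\ell_{s-1}$; note $k\le n-1<n$. The gap condition yields an occurrence of $\mathbf{u}$ in some word $\mathbf{w}'$ of the upper side with gaps $G'_s\subseteq G_s$. I would then rebuild a path for $\mathbf{w}'$ along the very same levels: the jump letter $u_s$ still satisfies $(u_s\varphi)_{\ell_{s-1}\ell_s}=1$, and each variable in $G'_s\subseteq G_s$ may rest at level $\ell_{s-1}$ because its diagonal entry there was already $1$ along the original path. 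This gives $(\mathbf{w}'\varphi)_{ij}=1$, hence $(\mathbf{W}'\varphi)_{ij}=1$; the containment $G'_s\subseteq G_s$ is exactly what permits reusing the resting levels.

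For necessity I would, conversely, turn a given occurrence into a test substitution. Suppose $\mathbf{u}=u_1\cdots u_k$ with $k<n$ occurs in the lower side with gaps $G_1,\dots,G_{k+1}$. Using the levels $1<2<\cdots<k+1\le n$, I define the matrix $x\varphi\in T_n$ for each variable $x$ by declaring $(x\varphi)_{ss}=1$ iff $x\in G_s$ (for $1\le s\le k+1$), $(x\varphi)_{s,s+1}=1$ iff $x=u_s$ (for $1\le s\le k$), and all other entries $0$. Tracing the occurrence of $\mathbf{u}$ gives $(\mathbf{w}\varphi)_{1,k+1}=1$ for the word $\mathbf{w}$ carrying it, whence $(\mathbf{W}\varphi)_{1,k+1}=1$, so the inequality produces $(\mathbf{W}'\varphi)_{1,k+1}=1$ and hence a path from $1$ to $k+1$ through some word $\mathbf{w}'$ of the upper side. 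The crux is that, by construction, every nonzero entry of every $x\varphi$ lies on the main diagonal or the first superdiagonal, so each step of this path either stays put or increases the level by exactly $1$; to travel from $1$ to $k+1$ the path must therefore make exactly $k$ jumps, through $1\to2\to\cdots\to k+1$. The letter performing the $s$-th jump is forced to be $u_s$, so the jump letters decode to $\mathbf{u}$, while the letters resting at level $s$ lie in $G_s$; thus this occurrence of $\mathbf{u}$ in $\mathbf{w}'$ has gaps $G'_s\subseteq G_s$, as required.

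The semigroup statement for the ordered semigroup $(T_n,\cdot,\le)$ is the special case in which both sides are single words, and the same test substitution and decoding apply verbatim, since the value of a single word and the order \eqref{eq:order} are computed identically in $(T_n,\cdot)$ and in $(T_n,+,\cdot)$. The degenerate case $k=0$ (the empty word, with its single gap equal to the whole alphabet), where the path rests at one level, should be noted but is handled by the same argument. The main obstacle I anticipate is engineering the test substitution in the necessity direction so that the forced upper path can decode in only one way; this is precisely what is achieved by confining all nonzero entries to the diagonal and the first superdiagonal and by exploiting $k+1\le n$, which guarantees that the $k+1$ resting levels fit inside an $n\times n$ matrix.
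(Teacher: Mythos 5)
Your proposal is correct and follows essentially the same route as the paper: the same test substitution supported on the main diagonal and first superdiagonal for necessity (the paper isolates its decoding property as Lemma~\ref{lem:substitution}), and the same jump/resting-level analysis of the matrix product expansion for sufficiency, including the diagonal case $p=q$ via the empty-word convention and the bound $k\le n-1$ coming from the number of strict increases. Your ``path'' formulation is just a convenient rephrasing of the paper's expansion \eqref{eq:expansion}, so nothing of substance differs.
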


We start with a lemma yielding a matrix criterion for when a subword occurs in a word with specified gaps. The essence of this lemma is known (see the proof of necessity in \cite[Theorem 2]{PiSt85}); nevertheless, we provide a detailed proof for the sake of completeness.

Denote by $e_{ij}$ the $(i,j)$-th \emph{matrix unit}, i.e., the $n\times n$-matrix with 1 in the position $(i,j)$ and 0 in all other positions. The \emph{zero matrix}, i.e., the $n\times n$-matrix with 0 in all positions, will be denoted simply by 0; this should not cause any confusion.

If $\mathbf{u}$ is a word of length $k$, we denote by $u_\ell$ its variable at its $\ell$-th position from the left, so that $\mathbf{u}=u_1\cdots u_k$ if $k>0$. For each word $\mathbf{v}$ in which $\mathbf{u}$ occurs as a subword with gaps $G_1,G_2,\dots,G_{k+1}$ and each $n>k$, define a substitution $\varphi_{\mathbf{u},\mathbf{v}}\colon\alf(\mathbf{v})\to T_n$ as follows:
\begin{equation}\label{eq:substitution}
x\varphi_{\mathbf{u},\mathbf{v}}:=\sum_{x\in G_\ell} e_{\ell\,\ell}+\sum_{x=u_\ell} e_{\ell\,\ell{+}1}.
\end{equation}
For a variable $x\in\alf(\mathbf{v})$, the rule \eqref{eq:substitution} states that the matrix $x\varphi_{\mathbf{u},\mathbf{v}}$ has 1s only in the main diagonal and the superdiagonal. The superdiagonal entry $\bigl(x\varphi_{\mathbf{u},\mathbf{v}}\bigr)_{\ell\,\ell{+}1}$ is equal to 1 if and only if $x$ appears in the word $\mathbf{u}$ at its $\ell$-th position from the left, while the diagonal entry $\bigl(x\varphi_{\mathbf{u},\mathbf{v}}\bigr)_{\ell\,\ell}$ is equal to~1 if and only $x$ appears in gap $G_\ell$ of the chosen occurrence of $\mathbf{u}$ in $\mathbf{v}$. For instance, if $k=3$ and $n=4$, say, and $x$ appears in the word $\mathbf{v}$ as shown by the arrows in the following scheme:
\[
\mathbf{v} = \stackrel{\stackrel{x}{\downarrow}}{\mathbf{v}}_1 u_1 \mathbf{v}_2 \stackrel{\stackrel{x}{\downarrow}}{u}_2 \mathbf{v}_3\stackrel{\stackrel{x}{\downarrow}}{u}_3\stackrel{\stackrel{x}{\downarrow}}{\mathbf{v}}_4,
\]
then
\[
x\varphi_{\mathbf{u},\mathbf{v}}=e_{11}+e_{23}+e_{34}+e_{44}=\begin{pmatrix}1 & 0 & 0 & 0\\ 0 & 0 & 1 & 0\\ 0 & 0 & 0 & 1\\ 0 & 0 & 0 & 1\end{pmatrix}.
\]
Whenever we need to extend the substitution $\varphi_{\mathbf{u},\mathbf{v}}$ to variables beyond $\alf(\mathbf{v})$, we assume that such variables are mapped by $\varphi_{\mathbf{u},\mathbf{v}}$ to the zero $n\times n$-matrix 0.

\begin{lemma}
\label{lem:substitution}
Let $\mathbf{u}$ be a word of length $k<n$ and let $\mathbf{v}$ be a word in which $\mathbf{u}$ occurs as a subword with gaps $G_1,G_2,\dots,G_{k+1}$. Then for every word $\mathbf{w}$, the word $\mathbf{u}$ occurs in  $\mathbf{w}$ as a subword with gaps $G'_1,G'_2,\dots,G'_{k+1}$ such that $G'_\ell\subseteq G_\ell$ for~each $\ell=1,2,\dots,k{+}1$ if and only if $\bigl(\mathbf{w}\varphi_{\mathbf{u},\mathbf{v}}\bigr)_{1\,k+1}=1$.
\end{lemma}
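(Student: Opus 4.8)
The plan is to unfold the Boolean matrix product defining $\mathbf{w}\varphi_{\mathbf{u},\mathbf{v}}$ and read off its $(1\,k{+}1)$-entry as a disjunction over ``paths''. Write $\mathbf{w}=w_1\cdots w_m$ and abbreviate $M_j:=w_j\varphi_{\mathbf{u},\mathbf{v}}$ for each $j$. By the multiplication rule for Boolean matrices, $\bigl(\mathbf{w}\varphi_{\mathbf{u},\mathbf{v}}\bigr)_{1\,k+1}=1$ if and only if there is a sequence of indices $1=r_0,r_1,\dots,r_m=k+1$ such that $(M_j)_{r_{j-1}\,r_j}=1$ for every $j=1,\dots,m$.

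First I would exploit the special shape of the matrices $M_j$. By the defining rule, each $M_j$ has nonzero entries only on the main diagonal and the superdiagonal; hence $(M_j)_{r_{j-1}\,r_j}=1$ forces $r_j\in\{r_{j-1},r_{j-1}+1\}$. Thus along any admissible sequence the index is non-decreasing and rises by at most $1$ at each step, so a sequence running from $r_0=1$ to $r_m=k+1$ must perform exactly $k$ increments. Let $j_1<j_2<\dots<j_k$ be the steps at which an increment occurs; at step $j_\ell$ the index jumps from $\ell$ to $\ell+1$, which by \eqref{eq:substitution} is possible precisely when $w_{j_\ell}=u_\ell$. Each remaining (non-incrementing) step keeps the index fixed at some value $\ell$, and such a step at position $j$ is admissible precisely when $w_j\in G_\ell$.

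The heart of the argument is to recognize that this data is exactly an occurrence of $\mathbf{u}$ in $\mathbf{w}$ together with a containment of its gaps. Letting $\mathbf{w}_\ell$ be the factor of $\mathbf{w}$ strictly between the increment at $j_{\ell-1}$ and the increment at $j_\ell$ (with the obvious conventions for $\ell=1$ and $\ell=k+1$), the equalities $w_{j_\ell}=u_\ell$ say that $\mathbf{w}=\mathbf{w}_1u_1\mathbf{w}_2\cdots u_k\mathbf{w}_{k+1}$ is an occurrence of $\mathbf{u}$ as a subword in $\mathbf{w}$, while the admissibility of the non-incrementing steps along this path says precisely that every variable of $\mathbf{w}_\ell$ lies in $G_\ell$, that is, $G'_\ell:=\alf(\mathbf{w}_\ell)\subseteq G_\ell$. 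This establishes the ``only if'' direction. For the converse, starting from any occurrence of $\mathbf{u}$ in $\mathbf{w}$ with gaps $G'_\ell\subseteq G_\ell$, I would run the index from $1$ to $k+1$ by incrementing exactly at the positions of the distinguished letters $u_1,\dots,u_k$ and staying put elsewhere; the containments $G'_\ell\subseteq G_\ell$ guarantee that every stay-step is admissible, so the constructed sequence witnesses $\bigl(\mathbf{w}\varphi_{\mathbf{u},\mathbf{v}}\bigr)_{1\,k+1}=1$.

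I expect the only real care to be bookkeeping rather than conceptual difficulty: one must track that the $k$ increments necessarily occur in the rows $1,2,\dots,k$ in this order (forced because the index can only rise by $1$ per step), and handle the degenerate situations where some gap factor $\mathbf{w}_\ell$ is empty as well as the edge case $k=0$, in which the statement reduces to $\bigl(\mathbf{w}\varphi_{\mathbf{u},\mathbf{v}}\bigr)_{1\,1}=1$ holding exactly when $\alf(\mathbf{w})\subseteq G_1$, matching the convention that the empty word occurs with a single gap equal to $\alf(\mathbf{w})$.
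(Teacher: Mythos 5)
Your proposal is correct and follows essentially the same route as the paper's proof: expanding the Boolean product of the matrices $w_j\varphi_{\mathbf{u},\mathbf{v}}$, using their diagonal/superdiagonal structure to force exactly $k$ unit increments along any admissible index sequence, and reading off the correspondence between such sequences and occurrences of $\mathbf{u}$ in $\mathbf{w}$ with gaps contained in the $G_\ell$. The bookkeeping points you flag (increments occurring in order, empty gap factors, and the $k=0$ case) are precisely the details the paper's proof spells out.
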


\begin{proof} Write the word $\mathbf{w}$ as a product of variables, $\mathbf{w}=w_1\cdots w_m$ and, for each $s=1,\dots,m$, let $w_s\varphi_{\mathbf{u},\mathbf{v}}:=\bigl(\alpha_{ij}^{(s)}\bigr)_{n\times n}$. Since all matrices $\bigl(\alpha_{ij}^{(s)}\bigr)_{n\times n}$ are upper triangular, we get the following expansion of the entry of the matrix $\mathbf{w}\varphi_{\mathbf{u},\mathbf{v}}$ in the position $(1,k+1)$:
\begin{equation}
\label{eq:expansion}
\bigl(\mathbf{w}\varphi_{\mathbf{u},\mathbf{v}}\bigr)_{1\,k+1}=\sum_{1\le j_1\le\dots\le j_{m-1}\le k+1}\alpha_{1j_1}^{(1)}\alpha_{j_1j_2}^{(2)}\cdots\alpha_{j_{m-1}k+1}^{(m)}.
\end{equation}

\smallskip

\noindent\emph{Necessity.} Suppose that $\mathbf{u}$ occurs in $\mathbf{w}$ as a subword with gaps $G'_1,G'_2,\dots,G'_{k+1}$ such that $G'_\ell\subseteq G_\ell$ for~each $\ell=1,2,\dots,k{+}1$.

If $k=0$ (that is, the word $\mathbf{u}$ is empty), then the sum  in the right-hand side of~\eqref{eq:expansion} reduces to the product $\alpha_{11}^{(1)}\alpha_{11}^{(2)}\cdots\alpha_{11}^{(m)}$. Recall that, by our convention, the empty word occurs as a subword in every word with gap equal to the alphabet of this word. Thus, we have $G_1=\alf(\mathbf{v})$ and $G'_1=\alf(\mathbf{w})$. By the rule \eqref{eq:substitution}, $x\varphi_{\mathbf{u},\mathbf{v}}=e_{11}$ for all $x\in G_1$. As $G'_1\subseteq G_1$, we have $x\varphi_{\mathbf{u},\mathbf{v}}=e_{11}$ for all $x\in G'_1$ whence $\alpha_{11}^{(s)}=1$ for each $s=1,\dots,m$. Therefore, $\bigl(\mathbf{w}\varphi_{\mathbf{u},\mathbf{v}}\bigr)_{11}=\alpha_{11}^{(1)}\alpha_{11}^{(2)}\cdots\alpha_{11}^{(m)}=1$.

Now assume $k>0$. Since $\mathbf{u}=u_1\cdots u_k$ occurs in $\mathbf{w}$ with gaps $G'_1,G'_2,\dots,G'_{k+1}$ such that $G'_\ell\subseteq G_\ell$ for~each $\ell=1,2,\dots,k{+}1$, there exists a sequence $1\le s_1<\dots<s_k\le m$ such that $u_\ell=w_{s_\ell}$ for each $\ell=1,\dots,k$, and moreover, if we let $s_0:=0$ and $s_{k+1}:=m+1$, then $\alf(w_{s_{\ell-1}+1}\cdots w_{s_\ell-1})=G'_\ell\subseteq G_\ell$ for each $\ell=1,\dots,k+1$. Then the rule \eqref{eq:substitution} implies that all factors in the product
\begin{multline*}
\alpha_{11}^{(1)}\cdots\alpha_{11}^{(s_1-1)}\alpha_{12}^{(s_1)}\alpha_{22}^{(s_1+1)}\cdots\alpha_{22}^{(s_2-1)}\alpha_{23}^{(s_2)}\cdots\\
\cdots\alpha_{k\,k}^{(s_{k-1}+1)}\cdots\alpha_{k\,k}^{(s_k-1)}\alpha_{k\,k+1}^{(s_k)}\alpha_{k+1\,k+1}^{(s_k+1)}\cdots\alpha_{k+1\,k+1}^{(m)}
\end{multline*}
are equal to 1, whence the product itself equals 1 and so does the sum in the right-hand side of~\eqref{eq:expansion} of which this product is one of the summands. Thus, $\bigl(\mathbf{w}\varphi_{\mathbf{u},\mathbf{v}}\bigr)_{1\,k+1}=1$.

\medskip

\noindent\emph{Sufficiency.} Suppose that $\bigl(\mathbf{w}\varphi_{\mathbf{u},\mathbf{v}}\bigr)_{1\,k+1}=1$, that is, the sum in the expansion \eqref{eq:expansion} equals 1.  Then the sum must have a summand equal to 1. Fix some indices $j_1,\dots,j_{m-1}$ such that
\begin{equation}
\label{eq:product}
\alpha_{1j_1}^{(1)}\alpha_{j_1j_2}^{(2)}\cdots\alpha_{j_{m-1}\ k+1}^{(m)}=1.
\end{equation}
Recall that we set $x\varphi_{\mathbf{u},\mathbf{v}}=0$ for all $x\notin\alf(\mathbf{v})$. From \eqref{eq:product}, we see that $x\varphi_{\mathbf{u},\mathbf{v}}\ne0$ for all $x\in\alf(\mathbf{w})$. We conclude that $\alf(\mathbf{w})\subseteq\alf(\mathbf{v})$. This proves our claim for the case $k=0$, that is, for $\mathbf{u}$ being empty. Indeed, by the convention we adopted, the empty word occurs as a  subword in $\mathbf{v}$ with gap $G_1=\alf(\mathbf{v})$ and in $\mathbf{w}$ with gap $G'_1=\alf(\mathbf{w})$ whence $G'_1\subseteq G_1$ as required.

So, assume $k>0$. As the matrices $x\varphi_{\mathbf{u},\mathbf{v}}$ can have 1s in the main diagonal and the superdiagonal only, $\alpha_{ij}^{(t)}=0$ whenever $j\ne i,i+1$. Therefore, for the equality \eqref{eq:product} to hold, all strict inequalities in the sequence $j_0:=1\le j_1\le\dots\le j_{m-1}\le k+1 =:j_{m}$ must be of the form $j_r<j_{r+1}=j_r+1$. To ascend from $j_0=1$ to $j_m = k+1$, exactly $k$ steps of the form $j_{r+1} = j_r + 1$ are required. For $\ell = 1, \dots, k$, let $t_\ell$ denote the position of the $\ell$-th such step from the left. We see that the product $\alpha_{1j_1}^{(1)}\alpha_{j_1j_2}^{(2)}\cdots\alpha_{j_{m-1}k+1}^{(m)}$ in \eqref{eq:product} must have the form
\begin{multline}
\label{eq:longproduct}
\alpha_{11}^{(1)}\cdots\alpha_{11}^{(t_1-1)}\alpha_{12}^{(t_1)}\alpha_{22}^{(t_1+1)}\cdots\alpha_{22}^{(t_2-1)}\alpha_{23}^{(t_2)}\cdots\\
\cdots\alpha_{k\,k}^{(t_{k-1}+1)}\cdots\alpha_{k\,k}^{(t_k-1)}\alpha_{k\,k+1}^{(t_k)}\alpha_{k+1\,k+1}^{(t_k+1)}\cdots\alpha_{k+1\,k+1}^{(m)},
\end{multline}
with all factors equal to 1. By the rule~\eqref{eq:substitution}, the equality $\alpha_{\ell\,\ell+1}^{(t_\ell)}=1$ is only possible if $w_{t_\ell}=u_\ell$. This fixes an occurrence of the word $\mathbf{u}=u_1\cdots u_k$ as a subword of the word $\mathbf{w}$. For convenience, let $t_0:=0$ and $t_{k+1}:=m{+}1$. Now for each $\ell=1,\dots,k{+}1$, consider the segment $\alpha_{\ell\,\ell}^{(t_{\ell-1}+1)}\cdots\alpha_{\ell\,\ell}^{(t_\ell-1)}$ of the product \eqref{eq:longproduct}, that is, the $\ell$-th from the left maximal segment of \eqref{eq:longproduct} whose entries come from the main diagonals of the matrices $w_t\varphi_{\mathbf{u},\mathbf{v}}$. The segment $\alpha_{\ell\,\ell}^{(t_{\ell-1}+1)}\cdots\alpha_{\ell\,\ell}^{(t_\ell-1)}$ corresponds to the factor $w_{t_{\ell-1}+1}\cdots w_{t_\ell-1}$ of the word $\mathbf{w}$. If $G'_\ell:=\alf(w_{t_{\ell-1}+1}\cdots w_{t_\ell-1})$, then the sets $G'_1,G'_2,\dots,G'_{k+1}$ constitute gaps corresponding to the occurrence of $\mathbf{u}$ in $\mathbf{w}$ that we just fixed. We have $\alpha_{\ell\,\ell}^{(t)}=1$ for all $t_{\ell-1}<t<t_\ell$ but by the rule~\eqref{eq:substitution}, this is only possible if all variables $w_{t_{\ell-1}+1},\dots,w_{t_\ell-1}$ belong to the set $G_\ell$. Hence, $G'_\ell\subseteq G_\ell$ for each $\ell=1,\dots,k+1$.
\end{proof}

\begin{proof}[Proof of Theorem~\ref{thm:ais-inequality}]
Semigroup inequalities are special instances of semiring ones, so we consider only the latter in the proof.

\medskip

\noindent\emph{Necessity.} Take an inequality $\mathbf{W}\preccurlyeq\mathbf{W}'$ holding in the \ais{} $(T_n,+,\cdot)$ of all Boolean upper triangular $n\times n$-matrices and consider an arbitrary word $\mathbf{u}$ of length $k<n$ that occurs in $\mathbf{W}$ with some gaps $G_1,G_2,\dots,G_{k+1}$. This means that $\mathbf{u}$ occurs with these gaps in some summand of the polynomial $\mathbf{W}$. We fix such a summand $\mathbf{w}$ and an occurrence of $\mathbf{u}$ in the word $\mathbf{w}$ with gaps $G_1,G_2,\dots,G_{k+1}$.

By Lemma~\ref{lem:substitution} (with $\mathbf{w}$ in the role of the word $\mathbf{v}$), we have $\bigl(\mathbf{w}\varphi_{\mathbf{u},\mathbf{w}}\bigr)_{1\,k+1}=1$. Hence, $\bigl(\mathbf{W}\varphi_{\mathbf{u},\mathbf{w}}\bigr)_{1\,k+1}=1$ since the matrix $\mathbf{w}\varphi_{\mathbf{u},\mathbf{w}}$ is one of the summands of the matrix $\mathbf{W}\varphi_{\mathbf{u},\mathbf{w}}$.

Since inequality $\mathbf{W}\preccurlyeq\mathbf{W}'$ holds in the \ais{} $(T_n,+,\cdot)$, we have $\mathbf{W}\varphi_{\mathbf{u},\mathbf{w}}\le\mathbf{W'}\varphi_{\mathbf{u},\mathbf{w}}$. The meaning of the order \eqref{eq:order} for Boolean matrices is quite transparent: $A\le B$ if and only if the matrix $B$ has 1 in every position in which the matrix $A$ has 1. We conclude that $\bigl(\mathbf{W'}\varphi_{\mathbf{u},\mathbf{w}}\bigr)_{1\,k+1}=1$, whence $\bigl(\mathbf{w'}\varphi_{\mathbf{u},\mathbf{w}}\bigr)_{1\,k+1}=1$ for a summand $\mathbf{w'}$ of the polynomial $\mathbf{W'}$. By Lemma~\ref{lem:substitution}, $\mathbf{u}$ occurs in  $\mathbf{w'}$ as a subword with gaps $G'_1,G'_2,\dots,G'_{k+1}$ such that $G'_\ell\subseteq G_\ell$.

\medskip

\noindent\emph{Sufficiency.} Take two polynomials $\mathbf{W}$ and $\mathbf{W'}$ with the property that every word of length $k<n$ that occurs in $\mathbf{W}$ with gaps $G_1,G_2,\dots,G_{k+1}$ occurs in $\mathbf{W'}$ with gaps $G'_1,G'_2,\dots,G'_{k+1}$ such that $G'_\ell\subseteq G_\ell$ for each $\ell=1,2,\dots,k+1$. We aim to show that the \ais{} $(T_n,+,\cdot)$ of all Boolean upper triangular $n\times n$-matrices satisfies the inequality $\mathbf{W}\preccurlyeq\mathbf{W}'$, that is,  $\mathbf{W}\varphi\le\mathbf{W'}\varphi$ for an arbitrary substitution $\varphi\colon\alf(\mathbf{W})\cup\alf(\mathbf{W}')\to T_n$. According to the  meaning of the order \eqref{eq:order} for Boolean matrices, we need to show that $\bigl(\mathbf{W'}\varphi\bigr)_{p\,q}=1$ whenever $\bigl(\mathbf{W}\varphi\bigr)_{p\,q}=1$.

Fix a pair of indices $(p,q)$ such that $\bigl(\mathbf{W}\varphi\bigr)_{p\,q}=1$. There must be a summand $\mathbf{w}$ of the polynomial $\mathbf{W}$ such that $\bigl(\mathbf{w}\varphi\bigr)_{p\,q}=1$; we fix such a summand.
The matrix $\mathbf{w}\varphi$ is upper triangular, so we have $p\le q$.

First consider the case where $q=p$. Write the word $\mathbf{w}$ as a product of variables, $\mathbf{w}=w_1\cdots w_m$, and let $w_s\varphi:=\bigl(\alpha_{ij}^{(s)}\bigr)_{n\times n}$ for each $s=1,\dots,m$. Clearly, for each $p=1,\dots,n$,
\[
\bigl(\mathbf{w}\varphi\bigr)_{p\,p}=\prod_{1\le s\le m}\alpha_{p\,p}^{(s)},
\]
and therefore, $\bigl(\mathbf{w}\varphi\bigr)_{p\,p}=1$ implies $\bigl(x\varphi\bigr)_{p\,p}=1$ for each variable $x\in\alf(\mathbf{w})$. By our convention, the empty word occurs as a  subword in $\mathbf{w}$ with gap $G_1=\alf(\mathbf{w})$. Then it must occur in some summand $\mathbf{w'}$ of the polynomial $\mathbf{W}'$ with gap $G'_1=\alf(\mathbf{w}')$ such that $G'_1\subseteq G_1$. Hence $\alf(\mathbf{w}')\subseteq\alf(\mathbf{w})$ which ensures $\bigl(\mathbf{w'}\varphi\bigr)_{p\,p}=1$.

Thus, we may assume that $p<q$ for the rest of the proof. Then
\[
\bigl(\mathbf{w}\varphi\bigr)_{p\,q}=\sum_{p\le j_1\le\dots\le j_{m-1}\le q} \alpha_{p\,j_1}^{(1)}\alpha_{j_1j_2}^{(2)}\cdots\alpha_{j_{m-1}\,q}^{(m)},
\]
and since the sum on the right-hand side is equal to 1, so is one of its summands. Let the product $\alpha_{p\,j_1}^{(1)}\alpha_{j_1j_2}^{(2)}\cdots\alpha_{j_{m-1}\,q}^{(m)}$ be such a summand. For convenience of notation, let $j_0:=p$ and $j_m:=q$. Since $p<q$, among the inequalities $j_0\le j_1\le\dots\le j_{m-1}\le j_m$, there must be some strict ones. Let $j_{s_1-1}< j_{s_1}$,  $j_{s_2-1}< j_{s_2}$, \dots, $j_{s_k-1}< j_{s_k}$ be all such strict inequalities and let $d_\ell:=j_{s_\ell}-j_{s_\ell-1}$ for each $\ell=1,\dots,k$. Then the chosen product can be written as
\begin{multline}
\label{eq:longproduct1}
\alpha_{p\,p}^{(1)}\cdots\alpha_{p\,p}^{(s_1-1)}\alpha_{p\ p+d_1}^{(s_1)}\alpha_{p+d_1\,p+d_1}^{(s_1+1)}\cdots\alpha_{p+d_1\,p+d_1}^{(s_2-1)}\alpha_{p+d_1\,p+d_1+d_2}^{(s_2)}\cdots\\
\cdots\alpha_{q-d_k\ q-d_k}^{(s_{k-1}+1)}\cdots\alpha_{q-d_k\ q-d_k}^{(s_{k-1}-1)}\alpha_{q-d_k\ q}^{(s_k)}\alpha_{q\,q}^{(s_k+1)}\cdots\alpha_{q\,q}^{(m)}.
\end{multline}
Since the product is equal to 1, so is each of its factors. In particular,
\begin{equation}
\label{eq:conclusion}
\alpha_{p\ p+d_1}^{(s_1)}=\alpha_{p+d_1\,p+d_1+d_2}^{(s_2)}=\dots=\alpha_{q-d_k\ q}^{(s_k)}=1.
\end{equation}
Consider the  subword $\mathbf{u}:=w_{s_1}w_{s_2}\cdots w_{s_k}$ of the word $\mathbf{w}$. The length $k$ of $\mathbf{u}$ is the number of strict inequalities among $j_0\le j_1\le\dots\le j_{m-1}\le j_m$; hence $k\le j_m-j_0=q-p\le n-1$. The word $\mathbf{u}$ occurs in $\mathbf{w}$ with gaps $G_\ell:=\alf(\prod_{s_{\ell-1}<s<s_\ell}w_s)$, where $\ell=1,2,\dots,k+1$ and we set $s_0:=0$ and $s_{k+1}:=m+1$ so that for $\ell=1$ and $\ell=m+1$, the expression $\prod_{s_{\ell-1}<s<s_\ell}w_s$ represents the prefix $w_1\cdots w_{s_1-1}$ and, respectively, the suffix $w_{s_k+1}\cdots w_m$ of the word $\mathbf{w}$. By the condition, $\mathbf{u}$ must occur in some summand of the polynomial $\mathbf{W}'$ with gaps $G'_1,G'_2,\dots,G'_{k+1}$ such that $G'_\ell\subseteq G_\ell$ for each $\ell=1,2,\dots,k+1$. Abusing notation, we denote this summand by $\mathbf{w'}$ even though it need not be the same as the one used in the above analysis of the case where $q=p$.

Fix an occurrence of $\mathbf{u}$ as a  subword in $\mathbf{w'}$ with $G'_1,G'_2,\dots,G'_{k+1}$. Write the word $\mathbf{w'}$ as a product of variables, $\mathbf{w'}=w'_1\cdots w'_{m'}$, and let $t_1,t_2,\dots,t_k$ be the positions corresponding to the chosen occurrence of $\mathbf{u}$ in $\mathbf{w'}$. This means that, first, $w'_{t_\ell}=w_{s_\ell}$ for each $\ell=1,2,\dots,k$, and second, $G'_\ell:=\alf(\prod_{t_{\ell-1}<t<t_\ell}w'_t)\subseteq G_\ell$, where $\ell=1,2,\dots,k+1$ and, similarly to the above, we set $t_0:=0$ and $t_{k+1}:=m'+1$ in order to accommodate all gaps in a uniform expression, including the extremes $G'_1$ and $G'_{k+1}$. In the below scheme, the words $\mathbf{w}$ and $\mathbf{w'}$ are aligned at their common  subword $\mathbf{u}$, with both gap sequences explicitly written down.
\begin{center}
\begin{tabular}{rcccccl}
$\overbrace{w_1\cdots w_{s_1-1}}^{G_1=\alf(w_1\cdots w_{s_1-1})}\cdot$ & $w_{s_1}$ & $\cdot\overbrace{w_{s_1+1}\cdots w_{s_2-1}}^{G_2=\alf(w_{s_1+1}\cdots w_{s_2-1})}\cdot$ & $w_{s_2}$ & \dots & $w_{s_k}$ & $\cdot\overbrace{w_{s_k+1}\cdots w_m}^{G_{k+1}=\alf(w_{s_k+1}\cdots w_m)}$ \\
                     & $\shortparallel\rule{3pt}{0pt}$ &                       & $\shortparallel\rule{3pt}{0pt}$ & \dots & $\shortparallel\rule{3pt}{0pt}$ &  \\
$\underbrace{w'_1\cdots w'_{t_1-1}}_{G_1'=\alf(w'_1\cdots w'_{t_1-1})}\cdot$ & $w'_{t_1}$ & $\cdot\underbrace{w'_{t_1+1}\cdots w'_{t_2-1}}_{G_2'=\alf(w'_{t_1+1}\cdots w'_{t_2-1})}\cdot$ & $w'_{t_2}$ & \dots & $w'_{t_k}$ & $\cdot\underbrace{w'_{t_k+1}\cdots w'_{m'}}_{G'_{k+1}=\alf(w'_{t_k+1}\cdots w'_{m'})}$
\end{tabular}
\end{center}

Let $w'_t\varphi:=\bigl(\beta_{ij}^{(t)}\bigr)_{n\times n}$ for each $t=1,\dots,m'$ and consider the product
\begin{multline}
\label{eq:longproduct2}
\beta_{p\,p}^{(1)}\cdots\beta_{p\,p}^{(t_1-1)}\beta_{p\ p+d_1}^{(t_1)}\beta_{p+d_1\,p+d_1}^{(t_1+1)}\cdots\beta_{p+d_1\,p+d_1}^{(t_2-1)}\beta_{p+d_1\,p+d_1+d_2}^{(t_2)}\cdots\\
\cdots\beta_{q-d_k\ q-d_k}^{(t_{k-1}+1)}\cdots\beta_{q-d_k\ q-d_k}^{(t_{k-1}-1)}\beta_{q-d_k\ q}^{(t_k)}\beta_{q\,q}^{(t_k+1)}\cdots\beta_{q\,q}^{(m')},
\end{multline}
that appears as one of the summands in the expansion of $\bigl(\mathbf{w'}\varphi\bigr)_{p\,q}$ via entries of the matrices $\bigl(\beta_{ij}^{(t)}\bigr)_{n\times n}$, $t=1,\dots,m'$. We aim to show that the product \eqref{eq:longproduct2} is equal to 1, which will yield the equality $\bigl(\mathbf{w'}\varphi\bigr)_{p\,q}=1$.

The equalities $w'_{t_\ell}=w_{s_\ell}$ imply the matrix equalities $\bigl(\beta_{ij}^{(t_\ell)}\bigr)_{n\times n}=\bigl(\alpha_{ij}^{(s_\ell)}\bigr)_{n\times n}$ for each $\ell=1,2,\dots,k$. Hence, from \eqref{eq:conclusion} we get
\[
\beta_{p\ p+d_1}^{(t_1)}=\beta_{p+d_1\,p+d_1+d_2}^{(t_2)}=\dots=\beta_{q-d_k\ q}^{(t_k)}=1,
\]
that is, all factors of the product \eqref{eq:longproduct2} that are not on the main diagonals of the matrices involved in the product are equal to 1. It remains to understand why the factors from the main diagonals are also equal to 1.

From the fact that the product \eqref{eq:longproduct1} is equal to 1, we conclude that if $x$ is a variable belonging to gap $G_\ell$, then $\bigl(x\varphi)_{j\,j}=1$ for every $j_{s_{\ell-1}}\le j\le j_{s_{\ell}-1}$ and $\ell=1,2,\dots,k{+}1$. (Recall that we have set $s_0=0$ and $s_{k+1}=m+1$ while $j_0=p$ and $j_m=q$ so that $j_{s_0}=j_0=p$ and $j_{s_{k+1}-1}=j_m=q$.) Since $G'_\ell\subseteq G_\ell$ for each $\ell=1,2,\dots,k+1$ and $G'_\ell=\alf(\prod_{t_{\ell-1}<t<t_\ell}w'_t)$, we see that every variable $w'_t$ with $t_{\ell-1}<t<t_\ell$ belongs to $G_\ell$. Hence $\bigl(w'_t\varphi)_{j\,j}=\beta^{t}_{j\,j}=1$ for all $t_{\ell-1}<t<t_\ell$ and $j_{t_{\ell-1}}\le j\le j_{t_{\ell}-1}$. In the notation used in \eqref{eq:longproduct2}, this means
\begin{align*}
  &\beta_{p\,p}^{(t)}=1 && \text{for}\ t=1,\dots,t_1-1,\\
  &\beta_{p+d_1\,p+d_1}^{(t)}=1 && \text{for}\ t=t_1+1,\dots,t_2-1,\\
  &\makebox[2.1cm]{\dotfill} &&\\
  &\beta_{q\,q}^{(t)}=1 && \text{for}\ t=t_k+1,\dots,m'.
\end{align*}
Thus, all factors in the product \eqref{eq:longproduct2} are equal to 1 whence $\bigl(\mathbf{w'}\varphi\bigr)_{p\,q}=1$. This implies that $\bigl(\mathbf{W'}\varphi\bigr)_{p\,q}=1$ as required.
\end{proof}

A characterization of identities holding in the \ais{} $(T_n,+,\cdot)$ or the semigroup $(T_n,\cdot)$ follows immediately.

\begin{corollary}
\label{cor:ais-identity}
The \ais{} \textup[semigroup\textup] of all Boolean upper triangular $n\times n$-matrices satisfies a semiring \textup[respectively, semigroup\textup] identity if and only if every word of length $k<n$ that occurs in one side of the identity with gaps $G_1,G_2,\dots,G_{k+1}$ occurs in the other side of the identity with gaps $G'_1,G'_2,\dots,G'_{k+1}$ such that $G'_\ell\subseteq G_\ell$ \ for each $\ell=1,2,\dots,k{+}1$.
\end{corollary}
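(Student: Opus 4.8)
The plan is to obtain the corollary directly from Theorem~\ref{thm:ais-inequality}, using the observation recorded at the end of the subsection on semiring identities and inequalities: an identity $\mathbf{W}\approx\mathbf{W}'$ holds in an \ais{} (respectively, in an ordered semigroup) if and only if the \ais{} satisfies both inequalities $\mathbf{W}\preccurlyeq\mathbf{W}'$ and $\mathbf{W}'\preccurlyeq\mathbf{W}$. Thus the characterization of identities will simply be the conjunction of two applications of the theorem, one for each inequality.

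First I would apply Theorem~\ref{thm:ais-inequality} to the inequality $\mathbf{W}\preccurlyeq\mathbf{W}'$. It asserts that this inequality holds in $(T_n,+,\cdot)$ exactly when every word of length $k<n$ occurring in the lower side $\mathbf{W}$ with gaps $G_1,G_2,\dots,G_{k+1}$ occurs in the upper side $\mathbf{W}'$ with gaps $G'_1,G'_2,\dots,G'_{k+1}$ satisfying $G'_\ell\subseteq G_\ell$ for each $\ell$. Then I would apply the same theorem to the opposite inequality $\mathbf{W}'\preccurlyeq\mathbf{W}$, where now $\mathbf{W}'$ serves as the lower side and $\mathbf{W}$ as the upper side; this yields the analogous condition with the roles of the two polynomials interchanged, again with the inclusions of gaps pointing from the lower side to the upper side.

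Finally, I would note that the conjunction of these two directional conditions is precisely the symmetric statement of the corollary: every word of length $k<n$ occurring in \emph{one} side of $\mathbf{W}\approx\mathbf{W}'$ with gaps $G_1,G_2,\dots,G_{k+1}$ occurs in the \emph{other} side with gaps $G'_1,G'_2,\dots,G'_{k+1}$ such that $G'_\ell\subseteq G_\ell$. Since semigroup identities are the special case of semiring identities in which each polynomial is a single word, the semigroup assertion follows from the semiring one with no extra argument, exactly as in the proof of the theorem.

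I expect essentially no obstacle here: the content is entirely carried by Theorem~\ref{thm:ais-inequality}, and the only point meriting a moment's care is to verify that the asymmetric ``lower side / upper side'' phrasing of the theorem, invoked in both directions, assembles into the symmetric ``one side / other side'' phrasing of the corollary without any mismatch in the direction of the gap inclusions $G'_\ell\subseteq G_\ell$.
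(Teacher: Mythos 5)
Your proposal is correct and matches the paper exactly: the paper gives no separate proof, stating only that the corollary ``follows immediately'' from Theorem~\ref{thm:ais-inequality} together with the observation that an identity holds in an \ais{} if and only if both opposite inequalities hold, which is precisely your two-directional application of the theorem.
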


We conclude the subsection with a concrete application of Corollary~\ref{cor:ais-identity}. In addition to illustrating our techniques, the identities established in the next example are used in~\cite{Vo:embedding}.

\begin{example}
\label{ex:two identities}
(i) The identity
\begin{equation}
\label{eq:semigroup identity in Tn}x^n\approx x^{n+1},
\end{equation}
holds in the semigroup $(T_n,\cdot)$.

(ii) The identity
\begin{equation}
\label{eq:semiring identity in Tn} x^{n-1}y^{n-1}\approx x^ny^{n-1}+x^{n-1}y^n
\end{equation}
holds in the \ais{} $(T_n,+,\cdot)$.
\end{example}

\begin{proof}
(i) Take any subword $\mathbf{u}=u_1\cdots u_k$ of $x^n$ with $k<n$, and consider its occurrence
\begin{equation}\label{eq:decompositionx}
x^n=\mathbf{v}_1u_1\mathbf{v}_2\cdots\mathbf{v}_ku_k\mathbf{v}_{k+1}.
\end{equation}
Here $u_1=\dots=u_k=x$ and each of the words $\mathbf{v}_\ell$, $\ell=1,2,\dots,k{+}1$, is either empty or a power of $x$. Hence each gap in \eqref{eq:decompositionx} is equal to either $\varnothing$ or $\{x\}$. Since $k<n$, not all of the words $\mathbf{v}_\ell$ are empty. If $i$ is such that the factor $\mathbf{v}_i$ is not empty, consider the decomposition of the word $x^{n+1}$ resulting by inserting $x$ after the factor $\mathbf{v}_i$ in the right-hand side of \eqref{eq:decompositionx}:
\[
x^{n+1}=\mathbf{v}_1u_1\mathbf{v}_2\cdots u_{i-1}\mathbf{v}_ixu_i\cdots\mathbf{v}_ku_k\mathbf{v}_{k+1}.
\]
This is an occurrence of the subword $\mathbf{u}$ in $x^{n+1}$ with the same gaps as in \eqref{eq:decompositionx}.

Conversely, any occurrence
\begin{equation}\label{eq:decompositionxx}
x^{n+1}=\mathbf{w}_1u_1\mathbf{w}_2\cdots\mathbf{w}_ku_k\mathbf{w}_{k+1}
\end{equation}
of $\mathbf{u}$ in $x^{n+1}$ has some nonempty factor $\mathbf{w}_i$. Then the decomposition
\[
x^n=\mathbf{w}_1u_1\mathbf{w}_2\cdots u_{i-1}\mathbf{w}'_iu_i\cdots\mathbf{w}_ku_k\mathbf{w}_{k+1},
\]
where $\mathbf{w}'_i$ is obtained from $\mathbf{w}_i$ by deleting a single instance of $x$, constitutes an occurrence of $\mathbf{u}$ in $x^n$ with gaps contained in the corresponding gaps of \eqref{eq:decompositionxx}.

We have thus verified the identity \eqref{eq:semigroup identity in Tn} fulfils the criterion of Corollary~\ref{cor:ais-identity}.

\medskip

(ii) Take any subword $\mathbf{u}=u_1\cdots u_k$ of $x^{n-1}y^{n-1}$ with $k<n$. Clearly, $\mathbf{u}=x^{p}y^{k-p}$ for some $0\le p\le k$. Any occurrence of $\mathbf{u}$ in $x^{n-1}y^{n-1}$ can be written as
\begin{equation}\label{eq:decompositionxy}
x^{n-1}y^{n-1}=\overbrace{\mathbf{v}_1u_1\mathbf{v}_2\cdots\mathbf{v}_pu_p\mathbf{v}'_{p+1}}^{x^{n-1}}\cdot\overbrace{\mathbf{v}''_{p+1}u_{p+1}\mathbf{v}_{p+2}\cdots\mathbf{v}_ku_k\mathbf{v}_{k+1}}^{y^{n-1}},
\end{equation}
where $u_1=\dots=u_p=x$, $u_{p+1}=\dots=u_k=y$, the words $\mathbf{v}_1$, \dots, $\mathbf{v}_p$, $\mathbf{v}'_{p+1}$ are either empty or powers of $x$, and the words $\mathbf{v}''_{p+1}$, $\mathbf{v}_{p+2}$, \dots, $\mathbf{v}_{k+1}$ are either empty or powers of $y$.

If $0<p<k$ or $k<n-1$, then there is a nonempty factor amongst $\mathbf{v}_1$, \dots, $\mathbf{v}_p$, $\mathbf{v}'_{p+1}$, and inserting $x$ after this factor in the right-hand side of \eqref{eq:decompositionxy}, we obtain a decomposition of the word $x^ny^{n-1}$, which is an occurrence of $\mathbf{u}$ in $x^ny^{n-1}$ with the same gaps as in \eqref{eq:decompositionxy}.

If $p=k=n-1$, then $\mathbf{u}=x^{n-1}$, and this subword occurs in $x^{n-1}y^{n-1}$ and $x^{n-1}y^n$ with the same gaps $\underbrace{\varnothing,\dots,\varnothing}_{\text{$n-1$ times}},\{y\}$.

If $p=0$, $k=n-1$, then $\mathbf{u}=y^{n-1}$, and this subword occurs in $x^{n-1}y^{n-1}$ and $x^ny^{n-1}$ with the same gaps $\{x\},\underbrace{\varnothing,\dots,\varnothing}_{\text{$n-1$ times}}$.

Conversely, take any word $\mathbf{u}=u_1\cdots u_k$ with $k<n$ that occurs as a subword in the polynomial $x^ny^{n-1}+x^{n-1}y^n$. By definition, this means that $\mathbf{u}$ is a subword in one of the words $x^ny^{n-1}$ or $x^{n-1}y^n$. Because of symmetry, it suffices to consider one case; so, assume that $\mathbf{u}$ is a subword of $x^ny^{n-1}$. We have $\mathbf{u}=x^{p}y^{k-p}$ for some $0\le p\le k$; therefore, any occurrence of $\mathbf{u}$ in $x^ny^{n-1}$ can be written as
\begin{equation}\label{eq:decompositionxxy}
x^ny^{n-1}=\overbrace{\mathbf{w}_1u_1\mathbf{w}_2\cdots\mathbf{w}_pu_p\mathbf{w}'_{p+1}}^{x^n}\cdot\overbrace{\mathbf{w}''_{p+1}u_{p+1}\mathbf{w}_{p+2}\cdots\mathbf{w}_ku_k\mathbf{w}_{k+1}}^{y^{n-1}},
\end{equation}
where $u_1=\dots=u_p=x$, $u_{p+1}=\dots=u_k=y$, the words $\mathbf{w}_1$, \dots, $\mathbf{w}_p$, $\mathbf{w}'_{p+1}$ are either empty or powers of $x$, and the words $\mathbf{w}''_{p+1}$, $\mathbf{w}_{p+2}$, \dots, $\mathbf{w}_{k+1}$ are either empty or powers of $y$. Since $p\le k<n$, there is a nonempty factor amongst $\mathbf{w}_1$, \dots, $\mathbf{w}_p$, $\mathbf{w}'_{p+1}$. Deleting a single instance of $x$ from this factor yields  a decomposition of the word $x^{n-1}y^{n-1}$, which is an occurrence of $\mathbf{u}$ in $x^{n-1}y^{n-1}$ with gaps contained in the corresponding gaps of \eqref{eq:decompositionxxy}.

We have thus verified the identity \eqref{eq:semiring identity in Tn} fulfils the criterion of Corollary~\ref{cor:ais-identity}.
\end{proof}

\subsection{Using inclusions is unavoidable}
While inclusions between gaps seem natural for characterizing inequalities of the \ais{} $(T_n,+,\cdot)$ and the ordered semigroup $(T_n,\cdot,\le)$, one might expect that identities $(T_n,+,\cdot)$ and $(T_n,\cdot)$ could be characterized in terms of equalities between gaps rather than inclusions. This, however, is not the case, as we discuss now. We restrict to semigroup identities for simplicity but the situation with semiring identities is the same.

Corollary~\ref{cor:ais-identity} easily implies that every identity $\mathbf{w}\approx\mathbf{w'}$ holding in the semigroup of all Boolean upper triangular $n\times n$-matrices satisfies the following condition $\exists\mathrm{EG}_n$ (existence of equal gaps):
\begin{itemize}
  \item[$\exists\mathrm{EG}_n$:] every common subword of $\mathbf{w}$ and $\mathbf{w'}$ of length $k<n$ has occurrences in $\mathbf{w}$ and $\mathbf{w'}$ with the same gaps.
\end{itemize}
Moreover, we can specify occurrences with equal gaps for each common subword of length $k<n$. An occurrence
\[
\mathbf{v} = \mathbf{v}_1u_1\mathbf{v}_2\cdots \mathbf{v}_ku_k\mathbf{v}_{k+1}
\]
of $\mathbf{u}=u_1\cdots u_k$ in $\mathbf{v}$ is said to be \emph{leftmost} if $u_i\notin\alf(\mathbf{v}_i)$ for all $i=1,\dots,k$. The leftmost occurrence of $\mathbf{u}$ in $\mathbf{v}$ is unique; that is, its factors
$\mathbf{v}_1,\mathbf{v}_2,\dots,\mathbf{v}_k,\mathbf{v}_{k+1}$ are uniquely determined. Indeed, $\mathbf{v}_1$ is uniquely determined as the longest prefix of the word $\mathbf{v}$ in which the variable $u_1$ does not occur. Assuming that $\mathbf{v}_i$ has already been uniquely determined for all $i<k$, the factor $\mathbf{v}_{i+1}$ is uniquely determined as the longest prefix not containing the variable $u_{i+1}$ of the word obtained from $\mathbf{v}$ by removing the prefix $\mathbf{v}_1u_1\mathbf{v}_2\cdots \mathbf{v}_iu_i$. Finally, $\mathbf{v}_{k+1}$ is uniquely determined as the suffix of $\mathbf{v}$ following $\mathbf{v}_1u_1\mathbf{v}_2\cdots \mathbf{v}_ku_k$.

\begin{corollary}
\label{cor:leftmost}
If an identity $\mathbf{w}\approx\mathbf{w'}$ holds in the semigroup $(T_n,\cdot)$, then in $\mathbf{w}$ and $\mathbf{w'}$, the leftmost occurrences of each common subword of length $k<n$ have the same gaps.
\end{corollary}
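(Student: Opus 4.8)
The plan is to prove the two inclusions $G'_\ell\subseteq G_\ell$ and $G_\ell\subseteq G'_\ell$ separately, where $G_1,\dots,G_{k+1}$ and $G'_1,\dots,G'_{k+1}$ denote the gaps of the leftmost occurrence of a common subword $\mathbf{u}=u_1\cdots u_k$ of length $k<n$ in $\mathbf{w}$ and in $\mathbf{w'}$, respectively. The engine is Corollary~\ref{cor:ais-identity} applied in each direction, and the decisive extra ingredient is the fact recorded just above: an occurrence of $\mathbf{u}$ in a word is leftmost \emph{precisely when} $u_i\notin\alf(\mathbf{v}_i)$ for each $i=1,\dots,k$, i.e.\ precisely when the variable $u_i$ lies in none of the gaps preceding it. Since this defining condition concerns only the membership of $u_i$ in the $i$-th gap, it is inherited by any occurrence whose gaps are contained componentwise in those of a leftmost one.

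For the inclusion $G'_\ell\subseteq G_\ell$, I would start from the leftmost occurrence of $\mathbf{u}$ in $\mathbf{w}$, with gaps $G_1,\dots,G_{k+1}$. By Corollary~\ref{cor:ais-identity}, $\mathbf{u}$ occurs in $\mathbf{w'}$ with some gaps $H_1,\dots,H_{k+1}$ satisfying $H_\ell\subseteq G_\ell$ for each $\ell$. Because the chosen occurrence in $\mathbf{w}$ is leftmost, $u_i\notin G_i$ for $i=1,\dots,k$; combined with $H_i\subseteq G_i$ this forces $u_i\notin H_i$ for $i=1,\dots,k$. Hence the occurrence of $\mathbf{u}$ in $\mathbf{w'}$ with gaps $H_1,\dots,H_{k+1}$ satisfies the leftmost criterion and therefore \emph{is} the leftmost occurrence of $\mathbf{u}$ in $\mathbf{w'}$, which is unique. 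Consequently $H_\ell=G'_\ell$, and the inclusion $H_\ell\subseteq G_\ell$ becomes $G'_\ell\subseteq G_\ell$.

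The reverse inclusion $G_\ell\subseteq G'_\ell$ follows by the symmetric argument, interchanging the roles of $\mathbf{w}$ and $\mathbf{w'}$, since the identity $\mathbf{w}\approx\mathbf{w'}$ and the criterion of Corollary~\ref{cor:ais-identity} are symmetric in the two sides. Combining the two inclusions yields $G_\ell=G'_\ell$ for every $\ell=1,\dots,k+1$, as claimed. The one genuinely substantive step is the middle one: recognizing that the subset relation supplied by Corollary~\ref{cor:ais-identity} cannot merely \emph{shrink} the gaps of a leftmost occurrence but must reproduce a leftmost occurrence on the other side, because the property ``no $u_i$ in the $i$-th gap'' is preserved under passing to subsets of the gaps. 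Uniqueness of the leftmost occurrence then turns the one-sided containment into the desired equality. Everything else is a direct invocation of the corollary, so I expect no computational difficulties.
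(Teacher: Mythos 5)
Your proof is correct and follows essentially the same route as the paper's: both hinge on the observation that the componentwise gap inclusion supplied by Corollary~\ref{cor:ais-identity} preserves the leftmost criterion ($u_i$ absent from the $i$-th gap), so the occurrence produced on the other side must, by uniqueness, be the leftmost one there. The only difference is organizational --- you run two symmetric one-way arguments to obtain the inclusions $G'_\ell\subseteq G_\ell$ and $G_\ell\subseteq G'_\ell$ separately, whereas the paper chains the two applications of the corollary into a round trip yielding $G_\ell=G''_\ell\subseteq G'_\ell\subseteq G_\ell$; the mathematical content is identical.
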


\begin{proof}
Let a word $\mathbf{u}=u_1\cdots u_k$ with $k<n$ occur in $\mathbf{w}$, and let
\begin{equation}\label{eq:leftmost1}
\mathbf{w} = \mathbf{w}_1u_1\mathbf{w}_2\cdots \mathbf{w}_ku_k\mathbf{w}_{k+1}
\end{equation}
be its leftmost occurrence in $\mathbf{w}$. For $\ell=1,\dots,k+1$, denote $\alf(\mathbf{w}_\ell)$ by $G_\ell$.  By Corollary~\ref{cor:ais-identity}, $\mathbf{u}$ must occur in $\mathbf{w'}$ with gaps $G'_1,G'_2,\dots,G'_{k+1}$ such that $G'_\ell\subseteq G_\ell$ for each $\ell=1,2,\dots,k+1$. Let
\begin{equation}\label{eq:leftmost2}
\mathbf{w'} = \mathbf{w'}_1u_1\mathbf{w'}_2\cdots \mathbf{w'}_ku_k\mathbf{w'}_{k+1}
\end{equation}
be an occurrence of $\mathbf{u}$ in $\mathbf{w'}$ such that $\alf(\mathbf{w'}_\ell)=G'_\ell$ for each $\ell=1,\dots,k+1$. Then $u_i\notin\alf(\mathbf{w'}_i)$ for all $i=1,\dots,k$, because $\alf(\mathbf{w'}_i)=G'_i\subseteq G_i=\alf(\mathbf{w}_i)$ and $u_i\notin\alf(\mathbf{w}_i)$ as \eqref{eq:leftmost1} is the leftmost occurrence of $\mathbf{u}$ in $\mathbf{w}$. We conclude that \eqref{eq:leftmost2} is the leftmost occurrence of $\mathbf{u}$ in $\mathbf{w'}$. Applying Corollary~\ref{cor:ais-identity} to \eqref{eq:leftmost2}, we obtain an occurrence of $\mathbf{u}$ in $\mathbf{w}$ with gaps $G''_1,G''_2,\dots,G''_{k+1}$ such that $G''_\ell\subseteq G'_\ell$ for each $\ell=1,2,\dots,k+1$. The same argument shows that it must be the leftmost occurrence of $\mathbf{u}$ in $\mathbf{w}$. As discussed prior to the formulation of this corollary, the leftmost occurrence is unique so that the occurrence of $\mathbf{u}$ in $\mathbf{w}$ with gaps $G''_1,G''_2,\dots,G''_{k+1}$ coincides with the occurrence \eqref{eq:leftmost1} with gaps $G_1,G_2,\dots,G_{k+1}$. Hence, $G_\ell=G''_\ell\subseteq G'_\ell\subseteq G_\ell$ for each $\ell=1,2,\dots,k+1$, and thus, the leftmost occurrences of $\mathbf{u}$ in $\mathbf{w}$ and $\mathbf{w'}$ have the same gaps.
\end{proof}

\begin{remark} As a brief digression from our discussion, we record the following consequence of Corollary~\ref{cor:leftmost}.
\begin{corollary}
\label{cor:subword-n}
If an identity $\mathbf{w}\approx\mathbf{w'}$ holds in the semigroup $(T_n,\cdot)$, then the words $\mathbf{w}$ and $\mathbf{w'}$ have the same subwords of length $n$.
\end{corollary}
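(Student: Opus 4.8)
The plan is to reduce the length-$n$ case to the length-$(n{-}1)$ case already settled by Corollary~\ref{cor:leftmost}. Since the asserted conclusion is symmetric in $\mathbf{w}$ and $\mathbf{w'}$, it suffices to prove that every subword of $\mathbf{w}$ of length $n$ is also a subword of $\mathbf{w'}$; the reverse inclusion then follows by interchanging the roles of the two words. So I would fix such a subword $\mathbf{u}=u_1\cdots u_n$ of $\mathbf{w}$ and peel off its last letter, setting $\mathbf{u}':=u_1\cdots u_{n-1}$, a word of length $n{-}1<n$ to which Corollary~\ref{cor:leftmost} does apply.

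The first real step is to locate $u_n$ relative to the leftmost occurrence of $\mathbf{u}'$. Write that leftmost occurrence in $\mathbf{w}$ as $\mathbf{w}=\mathbf{a}_1u_1\cdots\mathbf{a}_{n-1}u_{n-1}\mathbf{a}_n$, and let $p_1<\dots<p_{n-1}$ be the positions of $\mathbf{w}$ at which $u_1,\dots,u_{n-1}$ are matched. The key claim is that $u_n\in\alf(\mathbf{a}_n)$, i.e.\ that the final letter occurs inside the last gap. To establish it, I would take any occurrence of the full word $\mathbf{u}$ in $\mathbf{w}$ (one exists, since $\mathbf{u}$ is a subword of $\mathbf{w}$) with matched positions $q_1<\dots<q_n$, and run the standard greedy induction showing $p_i\le q_i$ for all $i\le n{-}1$: the leftmost matching never lags behind any competing matching. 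In particular $p_{n-1}\le q_{n-1}<q_n$, so the occurrence of $u_n$ at position $q_n$ lies strictly to the right of $p_{n-1}$, hence inside $\mathbf{a}_n$, giving $u_n\in\alf(\mathbf{a}_n)$.

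Finally I would transport this to $\mathbf{w'}$. By Corollary~\ref{cor:leftmost}, the leftmost occurrence of $\mathbf{u}'$ in $\mathbf{w'}$, say $\mathbf{w'}=\mathbf{b}_1u_1\cdots\mathbf{b}_{n-1}u_{n-1}\mathbf{b}_n$, has the same gaps, so $\alf(\mathbf{b}_\ell)=\alf(\mathbf{a}_\ell)$ for each $\ell$. In particular $\alf(\mathbf{b}_n)=\alf(\mathbf{a}_n)\ni u_n$, whence $u_n$ occurs in $\mathbf{b}_n$ and we may factor $\mathbf{b}_n=\mathbf{c}\,u_n\,\mathbf{d}$. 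Substituting this in exhibits $\mathbf{w'}=\mathbf{b}_1u_1\cdots\mathbf{b}_{n-1}u_{n-1}\mathbf{c}\,u_n\,\mathbf{d}$ as an occurrence of $\mathbf{u}=u_1\cdots u_{n-1}u_n$, so $\mathbf{u}$ is a subword of $\mathbf{w'}$, completing the argument. The degenerate case $n=1$ (where $\mathbf{u}'$ is empty) is covered by the same reasoning, using the convention that the empty word occurs with gap equal to the full alphabet, so that $\alf(\mathbf{w})=\alf(\mathbf{w'})$ carries $u_1$ from one word to the other.

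I expect the only non-formal point to be the greedy inequality $p_i\le q_i$ that underlies the claim $u_n\in\alf(\mathbf{a}_n)$; once that is in hand, the remainder is bookkeeping together with a direct appeal to Corollary~\ref{cor:leftmost}. It is worth emphasizing in the write-up that it is genuinely the \emph{leftmost} occurrence that makes the claim work: an arbitrary occurrence of $\mathbf{u}'$ could consume $u_n$ into one of its own matched positions, leaving nothing in the last gap.
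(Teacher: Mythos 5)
Your proposal is correct and follows essentially the same route as the paper: peel off the last letter, take the leftmost occurrence of $u_1\cdots u_{n-1}$ in $\mathbf{w}$, note that $u_n$ must lie in the final gap, and transport this to $\mathbf{w'}$ via Corollary~\ref{cor:leftmost}. The only difference is that you spell out, via the greedy inequality $p_i\le q_i$, the step the paper asserts without proof (that $u_n$ occurs in the last gap of the leftmost occurrence), which is a worthwhile addition but not a change of method.
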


\begin{proof}
Let a word $\mathbf{u}=u_1\cdots u_n$ occur in $\mathbf{w}$, and let
\[
\mathbf{w} = \mathbf{w}_1u_1\mathbf{w}_2\cdots \mathbf{w}_{n-1}u_{n-1}\mathbf{w}_{n}
\]
be the leftmost occurrence of the word $u_1\cdots u_{n-1}$ in $\mathbf{w}$. Then the variable $u_n$ occurs in the suffix $w_n$. By Corollary~\ref{cor:leftmost}, the leftmost occurrence of $u_1\cdots u_{n-1}$ in $\mathbf{w'}$,
\[
\mathbf{w'} = \mathbf{w'}_1u_1\mathbf{w'}_2\cdots \mathbf{w'}_{n-1}u_{n-1}\mathbf{w'}_{n}
\]
has the same gaps; in particular,  $\alf(\mathbf{w'}_n)=\alf(\mathbf{w}_n)$. Hence, the variable $u_n$ occurs in the suffix $\mathbf{w'}_n$, and the word $\mathbf{u}$ occurs in $\mathbf{w'}$.
\end{proof}

The result of Corollary~\ref{cor:subword-n} was somewhat unexpected, as it relates identities of Boolean upper triangular $n\times n$-matrices to those of Boolean upper \textbf{unitriangular} matrices of larger size. (A triangular matrix is called \emph{unitriangular} if all of its diagonal entries equal 1.) Let $U_{n+1}$ stand for the set of all Boolean upper unitriangular $(n+1)\times(n+1)$-matrices; clearly, this set is closed under matrix addition and multiplication. The identities of the semigroup $(U_{n+1},\cdot)$ were characterized in \cite[Theorem 2]{Vo04}: an identity $\mathbf{w}\approx\mathbf{w'}$ holds in $(U_{n+1},\cdot)$ if and only if the words $\mathbf{w}$ and $\mathbf{w'}$ have the same subwords of length at most $n$. Consequently, Corollaries~\ref{cor:subword-n} and~\ref{cor:ais-identity} imply that the semigroup $(U_{n+1},\cdot)$ satisfies every identity holding in the semigroup $(T_n,\cdot)$. The converse, however, fails for every $n>1$; for instance, the identity $(xy)^n\approx(yx)^n$ holds in $(U_{n+1},\cdot)$, but is violated in $(T_n,\cdot)$. We also notice that the established relation does not extend to semiring identities: for instance, the identity $xy\approx x^2y+xy^2$ holds in the \ais{} $(T_2,+,\cdot)$; see Example~\ref{ex:two identities}(ii), but this identity fails in the \ais{} $(U_3,+,\cdot)$.
\end{remark}

Returning to our discussion of the condition $\exists\mathrm{EG}_n$, we observe that, contrary to what one might expect, this condition is not sufficient for an identity to hold in the semigroup $(T_n,\cdot)$. For instance, the identity $xyx^2yx\approx xyxyx$ fails in $(T_3,\cdot)$ since, under the substitution
\[
x\mapsto\begin{pmatrix}1&1&0\\0&0&1\\0&0&1\end{pmatrix},\quad y\mapsto\begin{pmatrix}1&0&0\\0&0&0\\0&0&1\end{pmatrix},
\]
the value of the word $xyx^2yx$ is the matrix $\left(\begin{smallmatrix}1&1&1\\0&0&1\\0&0&1\end{smallmatrix}\right)$ while the value of the word $xyxyx$ is the matrix $\left(\begin{smallmatrix}1&1&0\\0&0&1\\0&0&1\end{smallmatrix}\right)$. (Alternatively, one can use Corollary~\ref{cor:ais-identity}: the word $xyx^2yx$ has an occurrence of the subword $x^2$ with gaps $\{x,y\},\varnothing,\{x,y\}$ while the subword does not occur in $xyxyx$ with empty `middle' gap.)  At the same time, a direct inspection shows that the identity satisfies the condition $\rule{0pt}{12pt}\exists\mathrm{EG}_3$, and moreover, for every common subword of  $xyx^2yx$ and $xyxyx$ with length at most 2, its leftmost occurrences in these words have the same gaps (and so do its dually defined rightmost occurrences).

On the other hand, by Corollary~\ref{cor:ais-identity}, the following condition $\forall\mathrm{EG}_n$ (all gaps are equal) is sufficient for an identity $\mathbf{w}\approx\mathbf{w'}$ to hold in $(T_n,\cdot)$:
\begin{itemize}
  \item[$\forall\mathrm{EG}_n$:] for every occurrence of a subword of length $k<n$ in one of the words $\mathbf{w}$ and $\mathbf{w'}$, there exists an occurrence of the subword in the other word with the same gaps.
\end{itemize}
However, the condition $\forall\mathrm{EG}_n$ is not necessary. For instance, the identity $x^2yx\approx xyx$ is easily seen to hold in the semigroup $(T_2,\cdot)$ and at the same time, the word $x$ of length 1 occurs with the gaps $\{x\},\{x,y\}$ in the word $x^2yx=x\cdot\underline{x}\cdot yx$, but not in the word $xyx$. So, $\forall\mathrm{EG}_2$ fails for this identity.

\subsection{A simplification in the semigroup case}
We aim to slightly simplify the criterion of Theorem~\ref{thm:ais-inequality} for semigroup inequalities. For this, we need two lemmas.

\begin{lemma}
\label{lem:long words}
Let $k$ be a positive integer and $\mathbf{w}$ a word of length at least $k$. Suppose that a word $\mathbf{w'}$ is such that every word of length $k$ that occurs in $\mathbf{w}$ with gaps $G_1,G_2,\dots,G_{k+1}$ also occurs in $\mathbf{w'}$ with gaps $G'_1,G'_2,\dots,G'_{k+1}$ such that $G'_\ell\subseteq G_\ell$ for each $\ell=1,2,\dots,k+1$. Then every word of length $k-1$ that occurs in $\mathbf{w}$ with gaps $H_1,H_2,\dots,H_k$ occurs in $\mathbf{w'}$ with gaps $H'_1,H'_2,\dots,H'_k$ such that $H'_j\subseteq H_j$ for each $j=1,2,\dots,k$.
\end{lemma}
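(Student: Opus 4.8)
The plan is to bootstrap from length $k$ down to length $k-1$ by inserting a single variable, applying the hypothesis, and then deleting it again. Fix a word $\mathbf{u}=u_1\cdots u_{k-1}$ together with an occurrence
\[
\mathbf{w}=\mathbf{w}_1u_1\mathbf{w}_2\cdots\mathbf{w}_{k-1}u_{k-1}\mathbf{w}_k
\]
realizing the gaps $H_\ell=\alf(\mathbf{w}_\ell)$. Since $|\mathbf{w}|\ge k>k-1$, this occurrence leaves at least one letter of $\mathbf{w}$ unused, so some factor $\mathbf{w}_i$ is nonempty. Choose a single instance of a variable $x$ inside it and write $\mathbf{w}_i=\mathbf{a}x\mathbf{b}$.

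Promoting this instance of $x$ to a selected letter turns $\mathbf{u}$ into the length-$k$ word $\mathbf{u}'=u_1\cdots u_{i-1}\,x\,u_i\cdots u_{k-1}$, which occurs in $\mathbf{w}$ with the gap sequence obtained from $H_1,\dots,H_k$ by splitting $H_i$ into the two pieces $\alf(\mathbf{a})$ and $\alf(\mathbf{b})$; note that both pieces are contained in $H_i$. Now I apply the hypothesis of the lemma to $\mathbf{u}'$: it occurs in $\mathbf{w'}$ with gaps contained gap-by-gap in those of $\mathbf{u}'$ in $\mathbf{w}$. In particular, the two gaps flanking the selected $x$ are contained in $\alf(\mathbf{a})$ and $\alf(\mathbf{b})$ respectively, while every other gap is contained in the corresponding $H_\ell$.

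The final step is to delete the selected $x$ from this occurrence of $\mathbf{u}'$ in $\mathbf{w'}$. This produces an occurrence of $\mathbf{u}$ in $\mathbf{w'}$ whose $i$-th gap is the union of the two flanking gaps together with $\{x\}$, while all other gaps are carried over unchanged. The one point requiring care—and the crux of the argument—is verifying that this merged $i$-th gap is still contained in $H_i$: the two flanking gaps lie inside $\alf(\mathbf{a})\cup\alf(\mathbf{b})\subseteq H_i$, and the inserted variable satisfies $x\in\alf(\mathbf{w}_i)=H_i$ as well, so the union is contained in $H_i$. The remaining gaps already lie in the matching $H_j$, so the resulting occurrence of $\mathbf{u}$ in $\mathbf{w'}$ has gaps $H'_j\subseteq H_j$ for every $j=1,\dots,k$, as required. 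The extreme choices $i=1$ or $i=k$ (a nonempty outer gap) and the degenerate case $k=1$ (where $\mathbf{u}$ is the empty word and the claim reduces to $\alf(\mathbf{w'})\subseteq\alf(\mathbf{w})$) are all handled by this same bookkeeping, so no separate treatment is needed.
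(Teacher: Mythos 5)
Your proposal is correct and follows essentially the same route as the paper's proof: insert a variable chosen from a nonempty gap to promote the length-$(k-1)$ word to a length-$k$ word, apply the hypothesis, then delete that variable and observe that the merged gap $G'_i\cup\{x\}\cup G'_{i+1}$ stays inside $H_i$. Your treatment of the boundary cases (outer gaps, $k=1$) is also consistent with how the paper handles them.
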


\begin{proof}
Let $\mathbf{u}=u_1\cdots u_{k-1}$ occur as a  subword in $\mathbf{w}$ with gaps $H_1,H_2,\dots,H_k$. This means that there exist words $\mathbf{w}_1, \mathbf{w}_2, \dots,  \mathbf{w}_k$ (some of which may be empty) such that
\[
\mathbf{w} = \mathbf{w}_1 u_1 \mathbf{w}_2\cdots u_{k-1}\mathbf{w}_k
\]
and $H_i=\alf(\mathbf{w}_i)$ for each $i=1,2,\dots,k$. As $|\mathbf{w}|\ge k$, at least one of the gaps $H_1,H_2,\dots,H_k$ is nonempty. Fix an index $i$ such that the gap $H_i$ is nonempty and pick up an arbitrary variable $t\in H_i$. Then the word $\mathbf{w}_i$ can be decomposed as $\mathbf{w}_i=\mathbf{y}t\mathbf{z}$ for some (possibly empty) words $\mathbf{y}$ and $\mathbf{z}$. The word
\[
\mathbf{v}:=\begin{cases} tu_1\cdots u_{k-1}&\text{if \ $i=1$,}\\
u_1\cdots u_{i-1}tu_i\cdots u_{k-1}&\text{if \ $1<i<k$,}\\
u_1\cdots u_{k-1}t&\text{if \ $i=k$}
\end{cases}
\]
has length $k$ and occurs as a subword in the word $\mathbf{w}$ with gaps $G_1,G_2,\dots,G_{k+1}$, where for each $\ell=1,2,\dots,k+1$,
\[
G_\ell=\begin{cases}
H_\ell&\text{if \ $\ell<i$,}\\
\alf(\mathbf{y})&\text{if \ $\ell=i$,}\\
\alf(\mathbf{z})&\text{if \ $\ell=i+1$,}\\
H_{\ell-1}&\text{if \ $\ell>i+1$.}\\
\end{cases}
\]
The top row of the scheme below shows the occurrences of $\mathbf{u}$ and $\mathbf{v}$ in the word $\mathbf{w}$ and the gaps corresponding to these occurrences for the case $1<i<k$. In this scheme, a symbol placed over or under a brace denotes the alphabet of the overbraced or underbraced word.
\begin{center}
\begin{tabular}{rccccccccccccc}
$\mathbf{w}=$ & $\underbrace{\overbrace{\mathbf{w}_1}^{H_1}}_{G_1}$ & $\cdot$ & $u_1$ & \dots & $u_{i-1}$ & $\cdot$ & $\overbrace{\underbrace{\mathbf{y}}_{G_i}\cdot\ t\cdot \underbrace{\mathbf{z}}_{G_{i+1}}}^{H_i}$ & $\cdot$ & $u_i$ & \dots & $u_k$ & $\cdot$ & $\underbrace{\overbrace{\mathbf{w}_k}^{H_k}}_{G_{k+1}}$ \\
&\ \begin{rotate}{90}$\subseteq$\end{rotate} &&& \dots &&&\ \begin{rotate}{90}$\subseteq$\end{rotate} \qquad\quad  \begin{rotate}{90}$\subseteq$\end{rotate}&&& \dots &&&\ \begin{rotate}{90}$\subseteq$\end{rotate} \\
$\mathbf{w'}=$ & $\underbrace{\overbrace{\mathbf{w'}_1}^{G'_1}}_{H'_1}$ & $\cdot$ & $u_1$ & \dots & $u_{i-1}$ & $\cdot$ & $\underbrace{\overbrace{\mathbf{y'}}^{G'_i}\cdot\ t\cdot \overbrace{\mathbf{z'}}^{G'_{i+1}}}_{H'_i}$ & $\cdot$ & $u_i$ & \dots & $u_k$ & $\cdot$ & $\underbrace{\overbrace{\mathbf{w'}_k}^{G'_{k+1}}}_{H'_k}$
\end{tabular}
\end{center}
By the condition of the lemma, the word $\mathbf{v}$ occurs as a subword in the word $\mathbf{w'}$ with gaps $G'_1,G'_2,\dots,G'_{k+1}$ such that $G'_\ell\subseteq G_\ell$ for each $\ell=1,2,\dots,k+1$.
The occurrence of $\mathbf{v}$ in $\mathbf{w'}$ induces an occurrence of $\mathbf{u}$ in $\mathbf{w'}$ with gaps $H'_1,H'_2,\dots,H'_k$, where for each $j=1,2,\dots,k$,
\[
H'_j=\begin{cases}
G'_j&\text{if \ $j<i$,}\\
G'_i\cup\{t\}\cup G'_{i+1}&\text{if \ $j=i$,}\\
G'_{j+1}&\text{if \ $j>i$.}
\end{cases}
\]
For the case $1<i<k$, the occurrences of $\mathbf{u}$ and $\mathbf{v}$ in the word $\mathbf{w'}$ and the gaps corresponding to these occurrences are shown in the bottom row of the scheme above. Clearly, the inclusions  $G'_\ell\subseteq G_\ell$ for $\ell=1,2,\dots,k+1$ imply the inclusions $H'_j\subseteq H_j$ for $j=1,2,\dots,k$.
\end{proof}

A word $\mathbf{w}$ is \emph{minimal} [\emph{maximal}] for an ordered semigroup $(S,\cdot,\le)$ if the only word $\mathbf{w'}$ such that $(S,\cdot,\le)$ satisfies the inequality $\mathbf{w'}\preccurlyeq\mathbf{w}$ [respectively, $\mathbf{w}\preccurlyeq\mathbf{w'}$] is the word $\mathbf{w}$ itself.

\begin{lemma}
\label{lem:short words}
Words of length less than $n$ are maximal for the ordered semigroup $(T_n,\cdot,\le)$.
\end{lemma}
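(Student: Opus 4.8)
The plan is to invoke the semigroup case of Theorem~\ref{thm:ais-inequality} and apply its criterion to the word $\mathbf{w}$ regarded as a subword of itself. Unwinding the definition of maximality, I must show that whenever the ordered semigroup $(T_n,\cdot,\le)$ satisfies an inequality $\mathbf{w}\preccurlyeq\mathbf{w'}$ for some word $\mathbf{w'}$, one necessarily has $\mathbf{w'}=\mathbf{w}$.

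First I would set $k:=|\mathbf{w}|$, so that $k<n$ by hypothesis, and write $\mathbf{w}=w_1\cdots w_k$. The key observation is that $\mathbf{w}$ occurs as a subword in itself via the trivial decomposition in which every gap factor is empty; that is, $\mathbf{w}$ occurs in $\mathbf{w}$ with gaps $G_1=G_2=\dots=G_{k+1}=\varnothing$. Since $k<n$, this occurrence falls within the scope of the criterion of Theorem~\ref{thm:ais-inequality}, which is the whole point of the length restriction.

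Next, assuming that $\mathbf{w}\preccurlyeq\mathbf{w'}$ holds in $(T_n,\cdot,\le)$, the criterion (with $\mathbf{w}$ as the lower side and $\mathbf{w'}$ as the upper side) forces $\mathbf{w}$ to occur in $\mathbf{w'}$ with gaps $G'_1,\dots,G'_{k+1}$ satisfying $G'_\ell\subseteq G_\ell=\varnothing$, whence every $G'_\ell$ is empty. An occurrence of $\mathbf{w}=w_1\cdots w_k$ in $\mathbf{w'}$ in which all gap factors are empty means precisely that $\mathbf{w'}=w_1\cdots w_k=\mathbf{w}$, so the only candidate for $\mathbf{w'}$ is $\mathbf{w}$ itself, which is exactly maximality.

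I expect no genuine obstacle here beyond selecting the right word to feed into the criterion: once one notices that a word of length $<n$ is, viewed as a subword of itself, short enough for Theorem~\ref{thm:ais-inequality} to apply, the conclusion is immediate. The only point deserving an explicit remark is that empty gaps leave no room for additional variables in $\mathbf{w'}$; this is what rules out $\mathbf{w'}$ being a proper extension of $\mathbf{w}$ and pins down the upper side completely.
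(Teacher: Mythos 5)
Your proposal is correct and follows exactly the paper's own argument: take the trivial occurrence of $\mathbf{w}$ in itself with all gaps empty (possible since $|\mathbf{w}|<n$), invoke Theorem~\ref{thm:ais-inequality} to force an occurrence of $\mathbf{w}$ in $\mathbf{w'}$ with all gaps empty, and conclude $\mathbf{w'}=\mathbf{w}$. Nothing is missing, and the reasoning matches the paper step for step.
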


\begin{proof}
Take any word $\mathbf{w}=w_1\cdots w_k$ with $k<n$, and suppose that a word $\mathbf{w'}$ is such that the inequality $\mathbf{w}\preccurlyeq\mathbf{w'}$ holds in $(T_n,\cdot,\le)$. The word $\mathbf{w}$ occurs as a  subword of itself with all gaps $G_1,G_2,\dots,G_{k+1}$ being empty. By Theorem~\ref{thm:ais-inequality}, $\mathbf{w}$ must occur as a subword in $\mathbf{w'}$ with gaps $G'_1,G'_2,\dots,G'_{k+1}$ such that $G'_\ell\subseteq G_\ell$ for each $\ell=1,2,\dots,k+1$. Since $G_\ell$ is empty, so is $G'_\ell$. Thus, there is an occurrence of $w_1\cdots w_k$ as a subword in $\mathbf{w'}$ in which nothing precedes $w_1$, nothing follows $w_k$, and nothing occurs between $w_i$ and $w_{i+1}$ for each $i=1,2,\dots,k-1$. This is only possible if $\mathbf{w'}=w_1\cdots w_k$.
\end{proof}

\begin{theorem}
\label{thm:sem-inequality}
The ordered semigroup of all Boolean upper triangular $n\times n$-matrices satisfies an inequality $\mathbf{w}\preccurlyeq\mathbf{w'}$ if and only if either $\mathbf{w}=\mathbf{w'}$ or $|\mathbf{w}|\ge n$ and every word of length $n-1$ that occurs in $\mathbf{w}$ with gaps $G_1,G_2,\dots,G_n$ occurs in $\mathbf{w'}$ with gaps $G'_1,G'_2,\dots,G'_n$ such that $G'_\ell\subseteq G_\ell$ for each $\ell=1,2,\dots,n$.
\end{theorem}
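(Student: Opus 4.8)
The plan is to derive the statement directly from Theorem~\ref{thm:ais-inequality} by combining it with the two preparatory lemmas: the point is that for a lower side $\mathbf{w}$ of length at least $n$, verifying the gap-inclusion condition for subwords of length exactly $n-1$ already forces it for subwords of all smaller lengths, while short lower sides are pinned down by maximality. So the whole argument amounts to checking that the simplified condition is equivalent to the full criterion of Theorem~\ref{thm:ais-inequality}.

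For necessity, suppose $(T_n,\cdot,\le)$ satisfies $\mathbf{w}\preccurlyeq\mathbf{w'}$, and split on $|\mathbf{w}|$. If $|\mathbf{w}|<n$, then by Lemma~\ref{lem:short words} the word $\mathbf{w}$ is maximal for $(T_n,\cdot,\le)$, so the only upper side it admits is itself, giving $\mathbf{w}=\mathbf{w'}$. If $|\mathbf{w}|\ge n$, then the asserted length-$(n-1)$ condition is exactly the instance $k=n-1$ of the criterion of Theorem~\ref{thm:ais-inequality}, hence it holds. In either case the right-hand side of the claimed equivalence is true.

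For sufficiency, the case $\mathbf{w}=\mathbf{w'}$ is immediate from reflexivity of $\le$. So assume $|\mathbf{w}|\ge n$ together with the hypothesis that every length-$(n-1)$ subword of $\mathbf{w}$ occurs in $\mathbf{w'}$ with componentwise smaller gaps. I would then run a downward induction on subword length via Lemma~\ref{lem:long words}: applied with $k=n-1,n-2,\dots,1$ in turn, it transfers the gap-inclusion property from length $k$ to length $k-1$, so starting from length $n-1$ one obtains it for every length in $\{0,1,\dots,n-1\}$, that is, for all $k<n$. At each step the length requirement $|\mathbf{w}|\ge k$ demanded by Lemma~\ref{lem:long words} is met, since $|\mathbf{w}|\ge n>k$. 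Once the full criterion is established for all $k<n$, Theorem~\ref{thm:ais-inequality} yields $\mathbf{w}\preccurlyeq\mathbf{w'}$.

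The argument is almost entirely bookkeeping, and I expect no genuine obstacle; the only point meriting a moment's care is the bottom of the induction. Applying Lemma~\ref{lem:long words} with $k=1$ produces the length-$0$ statement, which by the empty-word convention reads $\alf(\mathbf{w'})\subseteq\alf(\mathbf{w})$ and is precisely the $k=0$ clause required by Theorem~\ref{thm:ais-inequality}, so no separate treatment of the empty subword is needed. The dichotomy in the statement then falls into place cleanly: the clause $|\mathbf{w}|\ge n$ is exactly what both enables the descent through Lemma~\ref{lem:long words} and excludes the short-word regime, where Lemma~\ref{lem:short words} collapses the inequality to the equality $\mathbf{w}=\mathbf{w'}$.
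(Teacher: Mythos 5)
Your proof is correct and follows essentially the same route as the paper: necessity via the maximality of short words (Lemma~\ref{lem:short words}) together with the $k=n-1$ instance of Theorem~\ref{thm:ais-inequality}, and sufficiency via the downward iteration of Lemma~\ref{lem:long words} from $k=n-1$ to $k=1$ followed by an appeal to Theorem~\ref{thm:ais-inequality}. Your extra remark that the $k=1$ application of Lemma~\ref{lem:long words} already yields the empty-subword clause $\alf(\mathbf{w'})\subseteq\alf(\mathbf{w})$ is a correct observation that the paper leaves implicit.
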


\begin{proof}
\emph{Necessity.} Suppose that $\mathbf{w}\ne\mathbf{w'}$. Then Lemma~\ref{lem:short words} ensures that $|\mathbf{w}|\ge n$. That every word of length $n-1$ occurring in $\mathbf{w}$ with gaps $G_1,G_2,\dots,G_n$ occurs in $\mathbf{w'}$ with gaps $G'_1,G'_2,\dots,G'_n$ such that $G'_\ell\subseteq G_\ell$ for each $\ell=1,2,\dots,n$ is a special instance of the condition established in Theorem~\ref{thm:ais-inequality}.

\medskip

\noindent\emph{Sufficiency.} If $\mathbf{w}=\mathbf{w'}$, there is nothing to prove. So, suppose that $|\mathbf{w}|\ge n$ and every word of length $n-1$ that occurs in $\mathbf{w}$ with gaps $G_1,G_2,\dots,G_n$ occurs in $\mathbf{w'}$ with gaps $G'_1,G'_2,\dots,G'_n$ such that $G'_\ell\subseteq G_\ell$ for each $\ell=1,2,\dots,n$. Applying Lemma~\ref{lem:long words} for $k=n-1,n-2,\dots,1$ in succession, we get that every word of length $k<n$ occurring in $\mathbf{w}$ with gaps $H_1,H_2,\dots,H_{k+1}$ also occurs in $\mathbf{w'}$ with gaps $H'_1,H'_2,\dots,H'_{k+1}$ such that $H'_j\subseteq H_j$ for each $j=1,2,\dots,k{+}1$. Then the inequality $\mathbf{w}\preccurlyeq\mathbf{w'}$ holds in $(T_n,\cdot,\le)$ by Theorem~\ref{thm:ais-inequality}.
\end{proof}

A characterization of identities holding in the semigroup $(T_n,\cdot)$ is immediate.

\begin{corollary}
\label{cor:sem-identity}
The semigroup of all Boolean upper triangular $n\times n$-matrices satisfies an identity $\mathbf{w}\approx\mathbf{w'}$ if and only if either $\mathbf{w}=\mathbf{w'}$ or $|\mathbf{w}|,|\mathbf{w'}|\ge n$ and every word of length $n-1$ that occurs in one of the words $\mathbf{w}$ and $\mathbf{w'}$ with gaps $G_1,G_2,\dots,G_n$ occurs in the other word with gaps $G'_1,G'_2,\dots,G'_n$ such that $G'_\ell\subseteq G_\ell$ \ for each $\ell=1,2,\dots,n$.
\end{corollary}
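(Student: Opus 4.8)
The plan is to derive this corollary directly from Theorem~\ref{thm:sem-inequality}, exploiting the equivalence recorded in the preliminaries: an identity $\mathbf{w}\approx\mathbf{w'}$ holds in an ordered semigroup precisely when both inequalities $\mathbf{w}\preccurlyeq\mathbf{w'}$ and $\mathbf{w'}\preccurlyeq\mathbf{w}$ hold. Since the order $\le$ on $T_n$ is antisymmetric, equality $\mathbf{w}\varphi=\mathbf{w'}\varphi$ of two values is equivalent to each being $\le$ the other; hence the plain semigroup $(T_n,\cdot)$ satisfies the identity $\mathbf{w}\approx\mathbf{w'}$ if and only if the ordered semigroup $(T_n,\cdot,\le)$ satisfies both of these inequalities. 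It therefore suffices to characterize when each inequality holds and intersect the two conditions.

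First I would dispose of the trivial case $\mathbf{w}=\mathbf{w'}$, which matches the first disjunct and in which both inequalities hold automatically. For the main case assume $\mathbf{w}\ne\mathbf{w'}$. Applying Theorem~\ref{thm:sem-inequality} to $\mathbf{w}\preccurlyeq\mathbf{w'}$ yields $|\mathbf{w}|\ge n$ together with the one-directional gap condition: every word of length $n-1$ occurring in $\mathbf{w}$ with gaps $G_1,\dots,G_n$ occurs in $\mathbf{w'}$ with gaps $G'_1,\dots,G'_n$ satisfying $G'_\ell\subseteq G_\ell$. Applying the same theorem to the opposite inequality $\mathbf{w'}\preccurlyeq\mathbf{w}$ yields $|\mathbf{w'}|\ge n$ and the reverse-directional gap condition, with the roles of $\mathbf{w}$ and $\mathbf{w'}$ interchanged. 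Taking the conjunction of these two characterizations gives exactly $|\mathbf{w}|,|\mathbf{w'}|\ge n$ together with the symmetric gap condition stated in the corollary, in which the phrasing ``one of the words $\dots$ the other word'' encodes both directions at once.

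For the converse I would read the argument backwards: given $|\mathbf{w}|,|\mathbf{w'}|\ge n$ and the symmetric gap condition, its two directional halves feed into Theorem~\ref{thm:sem-inequality} to establish $\mathbf{w}\preccurlyeq\mathbf{w'}$ and $\mathbf{w'}\preccurlyeq\mathbf{w}$ separately, whence the identity holds. I do not anticipate a genuine obstacle here—the corollary is essentially a bookkeeping merge of the two inequality characterizations. The only point requiring care is the handling of the length hypotheses: neither single inequality in Theorem~\ref{thm:sem-inequality} forces both words to be long, but the necessity direction of each inequality contributes one of the two bounds $|\mathbf{w}|\ge n$ and $|\mathbf{w'}|\ge n$, so together they deliver both, while on the sufficiency side both bounds are assumed outright.
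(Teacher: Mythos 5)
Your proposal is correct and is exactly the derivation the paper has in mind: the paper states the corollary as ``immediate'' from Theorem~\ref{thm:sem-inequality}, relying on the fact (recorded in the preliminaries, together with antisymmetry of the order \eqref{eq:order}) that an identity holds precisely when both inequalities $\mathbf{w}\preccurlyeq\mathbf{w'}$ and $\mathbf{w'}\preccurlyeq\mathbf{w}$ hold, and then intersecting the two characterizations. Your handling of the case split on $\mathbf{w}=\mathbf{w'}$ and of how each necessity direction supplies one of the length bounds $|\mathbf{w}|\ge n$, $|\mathbf{w'}|\ge n$ is exactly the bookkeeping the paper leaves to the reader.
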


\section{Applications}
\label{sec:applications}

\subsection{Complexity of identity checking}\label{subsec:check-id}\label{subsec:complexity} Here we assume the reader's familiarity with a few basic notions of computational complexity; they all can be found, e.g., in the early chapters of Papadimitriou's textbook \cite{Pa94}.

Given a semigroup $(S,\cdot)$, its \emph{identity checking problem}\footnote{Also called the `\emph{term equivalence problem}' in the literature.} \textsc{Check-Id}$(S,\cdot)$ is the following decision problem. An instance of \textsc{Check-Id}$(S,\cdot)$ is a semigroup identity $\mathbf{w}\approx\mathbf{w'}$. The answer to the instance is `YES' whenever the identity $\mathbf{w}\approx\mathbf{w'}$ holds in $(S,\cdot)$; otherwise, the answer is `NO'.
We stress that here the semigroup $(S,\cdot)$ is fixed and it is the identity $\mathbf{w}\approx\mathbf{w'}$ that serves as the input so that the time/space complexity of \textsc{Check-Id}$(S,\cdot)$ is measured in terms of the size of the identity, that is, in $|\mathbf{w}\mathbf{w'}|$. Studying computational complexity of identity checking in semigroups was proposed by Mark Sapir \cite[Problem~2.4]{KS95}.

For a finite semigroup $(S,\cdot)$, the problem \textsc{Check-Id}$(S,\cdot)$ belongs to the complexity class $\mathsf{coNP}$. Indeed, if an identity $\mathbf{w}\approx\mathbf{w'}$ fails in $(S,\cdot)$ and the words $\mathbf{w}$, $\mathbf{w'}$ involve $m$ variables in total, then a nondeterministic algorithm guesses an $m$-tuple of elements in $S$ witnessing the failure and then verifies the guess by computing the values of the words $\mathbf{w}$ and $\mathbf{w'}$ under the substitution that sends the variables in $\alf(\mathbf{w})\cup\alf(\mathbf{w}')$ to the entries of the guessed $m$-tuple. With multiplication in $(S,\cdot)$ assumed to be performed in unit time, the algorithm takes linear in $|\mathbf{w}\mathbf{w'}|$ time.

There are numerous examples of finite semigroups (and even finite groups) whose identity checking problem is $\mathsf{coNP}$-complete. We refer the reader to \cite{AVG09} and references therein for specific examples and a general discussion of the complexity viewpoint on verifying identities in finite semigroups. Here we only mention that the identity checking problem for the semigroup of \textbf{all} Boolean $n\times n$-matrices is $\mathsf{coNP}$-complete whenever $n\ge 5$. This does not seem to have been explicitly stated in the literature, but is an immediate consequence of the two following facts:
\begin{itemize}
  \item if a finite semigroup possesses a nonsolvable subgroup, then identity checking in the semigroup is $\mathsf{coNP}$-complete \cite[Corollary 1]{AVG09};
  \item for each $n\ge 5$, the semigroup of all Boolean $n\times n$-matrices has a nonsolvable subgroup, namely, the subgroup consisting of all permutation $n\times n$-matrices.
\end{itemize}

In contrast, Corollary~\ref{cor:sem-identity} leads to the following result:
\begin{proposition}
\label{prop:polynomial}
For any $n$, there is a polynomial time algorithm to decide if an identity holds in the semigroup of all upper triangular Boolean $n\times n$-matrices.
\end{proposition}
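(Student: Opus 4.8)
The plan is to convert the combinatorial criterion of Corollary~\ref{cor:sem-identity} into an explicit procedure and to observe that, for a \emph{fixed} $n$, every quantity the criterion refers to is polynomially bounded in the input length $|\mathbf{w}\mathbf{w'}|$. The decisive feature is that the subwords one must inspect all have the constant length $n-1$, so there are only polynomially many of them to examine, together with their gap data.

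First I would dispose of the cheap cases. Testing $\mathbf{w}=\mathbf{w'}$ takes linear time, and equality gives the answer `YES'. Otherwise Corollary~\ref{cor:sem-identity} shows the identity can hold only if $|\mathbf{w}|,|\mathbf{w'}|\ge n$; this is checked in linear time, and a `NO' is output if it fails. In the surviving case $\mathbf{w}\ne\mathbf{w'}$ and both words have length at least $n$, so it remains to verify the gap-domination condition of Corollary~\ref{cor:sem-identity} in both directions.

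For one direction I would run through every occurrence of a length-$(n-1)$ subword in $\mathbf{w}$. Each such occurrence is determined by a choice of $n-1$ positions in $\mathbf{w}$, so there are at most $\binom{|\mathbf{w}|}{n-1}=O(|\mathbf{w}|^{n-1})$ of them, a polynomial number because $n$ is fixed; from an occurrence one reads off the subword $\mathbf{u}=u_1\cdots u_{n-1}$ and its gap tuple $(G_1,\dots,G_n)$ in linear time. The task attached to each pair $\bigl(\mathbf{u},(G_1,\dots,G_n)\bigr)$ is to decide whether $\mathbf{u}$ occurs in $\mathbf{w'}$ with gaps $(G'_1,\dots,G'_n)$ satisfying $G'_\ell\subseteq G_\ell$ for all $\ell$; equivalently, whether $\mathbf{w'}$ factors as $\mathbf{w'}=\mathbf{p}_1u_1\mathbf{p}_2\cdots u_{n-1}\mathbf{p}_n$ with $\alf(\mathbf{p}_\ell)\subseteq G_\ell$ for each $\ell$. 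The opposite direction, testing occurrences in $\mathbf{w'}$ against $\mathbf{w}$, is handled in the same way, and the identity holds precisely when both directions succeed.

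The heart of the matter, and the step I expect to be the main obstacle, is this feasibility test, because the natural greedy strategy is wrong. Matching each $u_\ell$ as early as possible fails already for $\mathbf{u}=x$, $\mathbf{w'}=xx$ and bounds $G_1=\{x\}$, $G_2=\varnothing$: the valid factorization matches the \emph{second} $x$ and leaves an empty final gap, whereas the greedy matches the first $x$ and strands the remaining $x$ in the forbidden gap $G_2$. I would instead settle the test with a small nondeterministic automaton on the states $0,1,\dots,n-1$, where state $\ell$ encodes that $u_1,\dots,u_\ell$ have been matched and subsequent letters are being absorbed into gap $\ell+1$. Reading a letter $c$ in state $\ell$ admits a self-loop whenever $c\in G_{\ell+1}$ (placing $c$ in gap $\ell+1$) and a move to state $\ell+1$ whenever $\ell\le n-2$ and $c=u_{\ell+1}$ (matching $u_{\ell+1}$); the desired factorization exists iff state $n-1$ is active once all of $\mathbf{w'}$ has been consumed. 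As the automaton has only $n$ states, its subset simulation scans $\mathbf{w'}$ in time $O(n\,|\mathbf{w'}|)$. Multiplying this by the polynomially many occurrences to be tested, and adding the symmetric pass, keeps the entire algorithm polynomial in $|\mathbf{w}\mathbf{w'}|$, as required.
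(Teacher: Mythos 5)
Your proposal is correct, and its skeleton coincides with the paper's: both arguments reduce the problem to the criterion of Corollary~\ref{cor:sem-identity}, check the trivial cases first, and then exploit the fact that for fixed $n$ a word of length $m$ has at most $\binom{m}{n-1}$ occurrences of subwords of length $n-1$, so all occurrences together with their gap tuples can be enumerated in polynomial time. Where you diverge is in the inner test. The paper enumerates the occurrences (with gaps) in \emph{both} words, producing two polynomial-size lists, and then simply compares the lists entrywise: for each pair $\bigl(\mathbf{u},(G_1,\dots,G_n)\bigr)$ on one list it searches the other list for a pair $\bigl(\mathbf{u},(G'_1,\dots,G'_n)\bigr)$ with $G'_\ell\subseteq G_\ell$ for all $\ell$. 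This sidesteps entirely what you call ``the heart of the matter'': since the existence of a dominated occurrence is checked against an exhaustive list, no on-the-fly matching is needed, and the failure of the greedy strategy (your example with $\mathbf{u}=x$, $\mathbf{w'}=xx$ is a pertinent observation) never becomes an issue. Your alternative of deciding, for each occurrence extracted from one word, whether the other word factors as $\mathbf{p}_1u_1\mathbf{p}_2\cdots u_{n-1}\mathbf{p}_n$ with $\alf(\mathbf{p}_\ell)\subseteq G_\ell$ by simulating an $n$-state nondeterministic automaton is sound, and it buys a better exponent --- roughly $O(n\,m^{n})$ overall versus the roughly $O(m^{2(n-1)})$ pair comparisons implicit in the paper's list matching. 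What the paper's route buys is brevity and transparency: once both lists are written down, correctness is immediate from Corollary~\ref{cor:sem-identity}, with no need to argue that an automaton accepts exactly the factorizations with dominated gaps. Since the proposition only asserts polynomiality for fixed $n$, either implementation suffices.
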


\begin{proof}
Take an arbitrary identity $\mathbf{w}\approx\mathbf{w'}$ and let $m:=|\mathbf{w}\mathbf{w'}|$. To fix an occurrence of a subword of length $n-1$ in a word, one has to choose $n-1$ positions in the word. Hence, the number of all possible occurrences of subwords of length $n-1$ in each of the words $\mathbf{w}$ and $\mathbf{w'}$ is at most $\binom{m-1}{n-1}$. For each occurrence, one scan of $\mathbf{w}$ and $\mathbf{w'}$ suffices to compute corresponding gaps. Thus, in time $\le2(m-1)\binom{m-1}{n-1}$, which is a polynomial of $m$, we can compose two lists containing all subwords of length $n-1$ occurring in $\mathbf{w}$ and $\mathbf{w'}$ with gaps for each occurrence. Comparing the lists for $\mathbf{w}$ and $\mathbf{w'}$, we can verify the condition of Corollary~\ref{cor:sem-identity}, and hence, we can check if the identity $\mathbf{w}\approx\mathbf{w'}$ holds in the semigroup $(T_n,\cdot)$  in time polynomial in $m$.
\end{proof}

To place Proposition~\ref{prop:polynomial} in a broader context, we mention an analogous result for semigroups of upper triangular tropical $n\times n$-matrices. The \emph{tropical} ai-semiring is formed by the real numbers augmented with the symbol $-\infty$ under the operations $a\oplus b:=\max\{a,b\}$ and $a\otimes b:=a+b$, for which $-\infty$ plays the role of zero: $a\oplus-\infty=-\infty\oplus a=a$ and $a\otimes-\infty=-\infty\otimes a=-\infty$. A square matrix over the tropical \ais{} is considered \emph{upper triangular} if its entries below the main diagonal are all $-\infty$. The identity checking problem for the semigroup of all upper triangular tropical $n\times n$-matrices was deeply studied in \cite{DJK18,JT19} and was proved polynomial time decidable. Compared to our algorithm for \textsc{Check-Id}$(T_n,\cdot)$, algorithms developed in \cite{DJK18,JT19} are more cumbersome and their polynomial-time implementation crucially depends on the existence of a polynomial time algorithm for (real) linear programming. Still, the underlying logic is the same and the use of triangularity in our algorithm and those in \cite{DJK18,JT19} is quite similar.

What we have just said should not be understood as claiming that semigroups consisting of upper triangular matrices always admit polynomial time algorithms for checking identities. To show that not all semigroups of triangular matrices have identity checking problem in $\mathsf{P}$, we exhibit a subsemigroup $(C_4,\cdot)$ of $(T_4,\cdot)$ with $\mathsf{coNP}$-complete identity checking. The set $C_4$ comprises the following nine matrices:
\begin{align*}
e&=\begin{pmatrix}
1 & 0 & 0 & 0\\
0 & 1 & 0 & 0\\
0 & 0 & 1 & 0\\
0 & 0 & 0 & 1
\end{pmatrix},\quad
&a&=\begin{pmatrix}
1 & 1 & 0 & 0\\
0 & 0 & 0 & 1\\
0 & 0 & 0 & 0\\
0 & 0 & 0 & 1
\end{pmatrix},\quad
&b&=\begin{pmatrix}
1 & 0 & 1 & 0\\
0 & 0 & 0 & 0\\
0 & 0 & 0 & 1\\
0 & 0 & 0 & 1
\end{pmatrix},\\
a^2&=\begin{pmatrix}
1 & 1 & 0 & 1\\
0 & 0 & 0 & 1\\
0 & 0 & 0 & 0\\
0 & 0 & 0 & 1
\end{pmatrix},\quad
&ab&=\begin{pmatrix}
1& 0& 1& 0\\
0& 0& 0& 1\\
0& 0& 0& 0\\
0& 0& 0& 1
\end{pmatrix},\quad
&ba&=\begin{pmatrix}
1 & 1 & 0 & 0\\
0 & 0 & 0 & 0\\
0 & 0 & 0 & 1\\
0 & 0 & 0 & 1
\end{pmatrix},\\
b^2&=\begin{pmatrix}
1 & 0 & 1 & 1\\
0 & 0 & 0 & 0\\
0 & 0 & 0 & 1\\
0 & 0 & 0 & 1
\end{pmatrix},\quad
&a^2b&=\begin{pmatrix}
1 & 0 & 1 & 1\\
0 & 0 & 0 & 1\\
0 & 0 & 0 & 0\\
0 & 0 & 0 & 1
\end{pmatrix},\quad
&ba^2&=\begin{pmatrix}
1 & 1 & 0 & 1\\
0 & 0 & 0 & 0\\
0 & 0 & 0 & 1\\
0 & 0 & 0 & 1
\end{pmatrix}.
\end{align*}

We register several properties of $(C_4,\cdot)$. They all can be verified by direct computations.
\begin{enumerate}
  \item[$0^\circ$] The equalities $a^3=a^2$, $b^3=b^2$, and $ab^2=a^2b=a^2b^2$ hold.
  \item[$1^\circ$] The equalities $aba=a$ and $bab=b$ hold.
  \item[$2^\circ$] Except for $a$ and $b$, all matrices $c\in C_4$ are \emph{idempotent}, that is, satisfy $c^2=c$.
  \item[$3^\circ$] For each $c\in C_4$, its square $c^2$ is idempotent.
  \item[$4^\circ$] Any product of idempotent matrices from $C_4$ is idempotent.
  \item[$5^\circ$] The set $I:=\{a^2,b^2,a^2b,ba^2\}$ of all matrices having 1 in the position $(1,4)$ is an \emph{ideal} in $(C_4,\cdot)$, that is, $cd,dc\in I$ for all $c\in C_4$ and $d\in I$.
\end{enumerate}

\begin{proposition}
\label{prop:c4}
The problem \textsc{Check-Id}$(C_4,\cdot)$ is $\mathsf{coNP}$-complete.
\end{proposition}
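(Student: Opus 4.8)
The plan is to establish membership in $\mathsf{coNP}$ and then $\mathsf{coNP}$-hardness; only the latter requires real work. Membership is immediate from the general observation recorded earlier for finite semigroups: to witness that an identity $\mathbf{w}\approx\mathbf{w'}$ \emph{fails} in $(C_4,\cdot)$, a nondeterministic algorithm guesses a substitution of the at most $|\mathbf{w}\mathbf{w'}|$ variables into the nine-element set $C_4$ and checks in linear time that $\mathbf{w}$ and $\mathbf{w'}$ receive different values. Hence the nontrivial half is a polynomial-time reduction of an $\mathsf{NP}$-complete problem to the \emph{complement} of \textsc{Check-Id}$(C_4,\cdot)$, i.e.\ to deciding whether a given identity fails in $(C_4,\cdot)$.

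Before describing the reduction I would isolate the combinatorial gadgets hidden in properties $0^\circ$--$5^\circ$. The decisive feature is that, by $5^\circ$, the corner position $(1,4)$ acts as a flag: $\mathbf{w}\varphi$ lies in the ideal $I$ exactly when $\bigl(\mathbf{w}\varphi\bigr)_{14}=1$, and $I$ is absorbing. Two local detectors follow. First, on the palette $\{a,b\}$ one has $a^2,b^2\in I$, whereas $ab,ba\notin I$ and $aba=a,\ bab=b\notin I$ by $0^\circ$ and $1^\circ$; thus a product of letters from $\{a,b\}$ enters $I$ precisely when two equal letters occur consecutively (any such word literally contains $a^2$ or $b^2$, while a strictly alternating word collapses to one of $a,b,ab,ba$). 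This is an \emph{equality detector}. Secondly, on the palette $\{ab,ba,e\}$ the element $e$ is neutral, $ab,ba$ are idempotent, but $ab\cdot ba,\ ba\cdot ab\in I$; such a product avoids $I$ exactly when its non-$e$ factors are all equal, giving a \emph{distinctness detector}. A quick check with $0^\circ$--$5^\circ$ shows that no element outside $I$ squares into $I$ (the only candidates $e,ab,ba$ are idempotent), so at most two genuine mutually-annihilating colors are available. The target problem must therefore be a two-color, ternary-constraint problem, and the natural choice is the $\mathsf{NP}$-complete problem \textsc{Not-All-Equal $3$-Sat} (equivalently, $2$-colorability of $3$-uniform hypergraphs).

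The reduction then takes an instance of \textsc{Not-All-Equal $3$-Sat} and builds, in polynomial time, two words $\mathbf{w},\mathbf{w'}$. The truth value of each propositional variable is carried by a semigroup variable intended to take the value $a$ or $b$, and short guard factors are attached so that sending a guarded variable outside $\{a,b\}$ cannot create a discrepancy. Each clause is encoded by a factor interacting with the corner flag through the detectors above, and the separate clause factors are chained through the Brandt-type composition supplied by $1^\circ$ (with $ab,ba$ playing the role of the two diagonal idempotents and $a,b$ that of the two off-diagonal units): a product of such matrix-unit-like elements stays out of $I$ iff every junction composes, so the conjunction ``all clauses are simultaneously not-all-equal'' is realized as ``the whole product keeps its $(1,4)$-entry off $I$,'' while $I$ absorbs any violated clause. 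Choosing a companion word $\mathbf{w'}$ that agrees with $\mathbf{w}$ in every entry but is pinned inside $I$ at the corner, one gets that $\mathbf{w}\approx\mathbf{w'}$ fails under a substitution into $\{a,b\}$ exactly when that substitution is a valid not-all-equal assignment. Consequently $\mathbf{w}\approx\mathbf{w'}$ holds in $(C_4,\cdot)$ iff the instance has no solution, which is the desired reduction.

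The hard part is correctness, specifically the direction that a \emph{failing} substitution yields a genuine solution. A substitution ranges over all nine elements of $C_4$, and the two sides must be compared in every entry, not merely at the corner and over the palette. One therefore has to prove that the guard factors render any off-palette assignment — using $e$, the idempotents $ab,ba,a^2,b^2,\dots$, or the ideal elements — either harmless (producing identical values on the two sides) or equivalent to a palette assignment. This elimination of spurious solutions is exactly where properties $3^\circ$ and $4^\circ$ (squares and products of idempotents are idempotent) together with the ideal property $5^\circ$ are used to bound the possible behaviors of arbitrary substitutions, and engineering the guards so that the only productive substitutions are the intended two-colorings is the principal technical obstacle of the proof.
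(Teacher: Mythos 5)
Your $\mathsf{coNP}$ membership argument is fine and matches the paper. The hardness half, however, is a plan rather than a proof: you never construct the words $\mathbf{w}$, $\mathbf{w'}$, never specify the ``guard factors,'' and you explicitly defer the correctness argument (``engineering the guards \dots is the principal technical obstacle of the proof''). Since that engineering \emph{is} the entire content of the hardness proof, what remains is a genuine gap, not a compressed proof. The paper closes exactly this gap with a concrete construction: it reduces from \textsc{Hitting Set}, sets $\mathbf{v}_j:=(x_jy_jzy_jx_jz)^2$ for each element $s_j$, $\mathbf{u}_i:=(x_{j_1}\cdots x_{j_p}z)^2$ for each set $U_i$, and --- crucially --- takes $\mathbf{w'}:=\mathbf{w}^2$. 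This last choice is what makes the elimination of spurious substitutions (the step you flag as the obstacle) essentially automatic: the identity $\mathbf{w}\approx\mathbf{w}^2$ fails iff $\mathbf{w}\varphi$ is non-idempotent, and since every gadget is a square (hence idempotent by property $3^\circ$) and products of idempotents are idempotent (property $4^\circ$), any failing substitution must send $z$ to $a$ or $b$, after which the ideal argument pins $x_j\varphi,y_j\varphi$ to $\{b,e\}$ with no guards needed. Your plan instead requires a companion word $\mathbf{w'}$ that ``agrees with $\mathbf{w}$ in every entry but is pinned inside $I$ at the corner,'' and it is not at all clear such a word can be built; the paper's $\mathbf{w}^2$ trick is precisely the device that avoids having to.

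There is also a mismatch between your chosen source problem and the detectors actually available in $(C_4,\cdot)$. Your ``equality detector'' on the palette $\{a,b\}$ flags \emph{consecutive} equal letters, not ``all letters equal'': e.g.\ the not-all-equal pattern $a,a,b$ still lands in $I$ because $a^2$ occurs as a factor. And once you insert separators ($z\mapsto a$ between variables valued in $\{b,e\}$), the detector becomes ``exactly one $b$,'' not ``not all equal'': $(x_{j_1}\varphi\cdots x_{j_p}\varphi\, a)^2$ avoids $I$ precisely when exactly one $x_{j_k}\varphi$ equals $b$ (zero $b$'s gives $a^2$, two or more give a $b^2$ factor). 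Your ``distinctness detector'' on $\{ab,ba,e\}$ has the wrong polarity for NAE as well: it avoids $I$ when the non-$e$ factors are all \emph{equal}, whereas in the idempotency framework a satisfying assignment must be the one that keeps the product \emph{out} of $I$. So the constraint these gadgets naturally express is ``exactly one,'' which is why the paper reduces from \textsc{Hitting Set} (singleton intersection with each $U_i$) rather than from \textsc{Not-All-Equal $3$-Sat}; making NAE work would require new gadgets you have not exhibited.
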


\begin{proof}
As mentioned at the beginning of Subsect.~\ref{subsec:complexity}, the problem \textsc{Check-Id}$(S,\cdot)$ lies in the class $\mathsf{coNP}$ for any finite semigroup $(S,\cdot)$. Hence, it suffices to show that the problem \textsc{Check-Id}$(C_4,\cdot)$ is $\mathsf{coNP}$-hard. For this, we build a polynomial time reduction to the negation of \textsc{Check-Id}$(C_4,\cdot)$ from the problem \textsc{Hitting Set} which is known to be $\mathsf{NP}$-complete \cite[Main Theorem, item 15]{Ka72}.

An instance of \textsc{Hitting Set} is a family $\{U_i\mid i=1,\dots,q\}$ of subsets of a finite set $S=\{s_j\mid j=1,\dots,r\}$, and the question is whether there exists a \emph{hitting set} $H\subseteq S$ such that $|H\cap U_i|=1$ for each $i=1,\dots,q$. We are going to construct words $\mathbf{w}$ and $\mathbf{w'}$ such that:

1)~$|\mathbf{w}\mathbf{w'}|$ is upper bounded by a polynomial in $r$ and $q$, and

2) the identity $\mathbf{w}\approx\mathbf{w'}$ fails in the semigroup $(C_4,\cdot)$ if and only if a hitting set for the family $\{U_i\}$ exists.

We introduce $2r+1$ variables: two variables $x_j$ and $y_j$ for each $j=1,\dots,r$ and an extra variable $z$. For each $j=1,\dots,r$, let $\mathbf{v}_j:=(x_jy_jzy_jx_jz)^2$. Further, for each $i=1,\dots,q$, if $U_i=\{s_{j_1},\dots,s_{j_p}\}$ where $p=|U_i|\le r$, then we let $\mathbf{u}_i:=(x_{j_1}\cdots x_{j_p}z)^2$. Finally, define
\[
\mathbf{w}:=z\mathbf{u}_1\cdots \mathbf{u}_q\mathbf{v}_1\cdots\mathbf{v}_r\ \text{ and }\ \mathbf{w'}:=\mathbf{w}^2.
\]
Since $|\mathbf{v}_j|=12$ for each $j=1,\dots,r$ and $|\mathbf{u}_i|\le 2(r+1)$ for each $i=1,\dots,q$, we conclude that $|\mathbf{w}\mathbf{w'}|\le 3(1+2(r+1)q+12r)$, so claim 1) is established.

For the `if' part of claim 2), let $H$ be a hitting set for the family $\{U_i\mid i=1,\dots,q\}$. Define a substitution $\varphi\colon\{x_1,y_1,\dots,x_r,y_r,z\}\to C_4$ as follows:
\[
x_j\varphi:=\begin{cases}
b&\text{if $s_j\in H$},\\
e&\text{if $s_j\notin H$};
\end{cases}\qquad
y_j\varphi:=\begin{cases}
e&\text{if $s_j\in H$},\\
b&\text{if $s_j\notin H$};
\end{cases}\qquad
z\varphi:=a.
\]
One readily computes that $\mathbf{v}_j\varphi=(ba)^4$ for all $j=1,\dots,r$. Moreover, since $|H\cap U_i|=1$ for each $i=1,\dots,q$, one has $\mathbf{u}_i\varphi=(ba)^2$. By property $1^\circ$ one gets $\mathbf{w}\varphi=a(ba)^{2q}(ba)^{4r}=a$. Hence, the matrix $\mathbf{w}\varphi$ is not idempotent and the identity $\mathbf{w}\approx\mathbf{w'}$ fails in $(C_4,\cdot)$.

For the `only if' part, suppose that $\mathbf{w}\approx\mathbf{w'}$ fails in the semigroup $(C_4,\cdot)$. Then there exists a substitution $\varphi\colon\{x_1,y_1,\dots,x_r,y_r,z\}\to C_4$ with $\mathbf{w}\varphi\ne\mathbf{w'}\varphi$ which means that the matrix $\mathbf{w}\varphi$ is not idempotent. By the construction, all matrices $\mathbf{u}_i\varphi$ and $\mathbf{v}_j\varphi$ are squares whence they are idempotent
(property $3^\circ$). Since any product of idempotent matrices from $C_4$ is idempotent (property $4^\circ$), for $\mathbf{w}\varphi$ not to be idempotent, the matrix $z\varphi$ must not be idempotent. By property $2^\circ$ this means either $z\varphi=a$ or $z\varphi=b$. Consider the case $z\varphi=a$; the other case is fully analogous.

All matrices in the set $I=\{a^2,b^2,a^2b,ba^2\}$ are idempotent while $\mathbf{w}\varphi$ is not. Hence $\mathbf{w}\varphi\notin I$, and since $I$ is an ideal (property $5^\circ$), no factor of $\mathbf{w}\varphi$ can lie in $I$. In particular,
\begin{equation}\label{eq:elements}
\mathbf{v}_j\varphi=\left((x_jy_jzy_jx_jz)^2\right)\varphi=\left((x_j\varphi)(y_j\varphi)a(y_j\varphi)(x_j\varphi)a\right)^2\notin I,
\end{equation}
for every  $j=1,\dots,r$. For \eqref{eq:elements} to hold, one of the matrices $x_j\varphi,y_j\varphi$ must be $b$ and the other must be $e$. Indeed, both $x_j\varphi$ and $y_j\varphi$ must lie in $C_4\setminus I=\{e,a,b,ab,ba\}$. If $x_j\varphi$ or $y_j\varphi$ lies in $\{a,ab,ba\}$, then $a^2$ would occur as a factor in $\left((x_j\varphi)(y_j\varphi)a(y_j\varphi)(x_j\varphi)a\right)^2$, contradicting \eqref{eq:elements}. Hence $x_j\varphi,y_j\varphi\in\{b,e\}$ but the matrices $x_j\varphi,y_j\varphi$ cannot be equal as either $a^2$ or $b^2$ would occur as a factor in $\left((x_j\varphi)(y_j\varphi)a(y_j\varphi)(x_j\varphi)a\right)^2$ otherwise.

Thus, $x_j\varphi\in\{b,e\}$ for every $j$. For each $i=1,\dots,q$, we have
\begin{equation}\label{eq:sets}
\mathbf{u}_i\varphi=\left((x_{j_1}\cdots x_{j_p}z)^2\right)\varphi=\left((x_{j_1}\varphi)\cdots (x_{j_p}\varphi)a\right)^2\notin I.
\end{equation}
If at least two of the matrices $x_{j_k}\varphi$, $k=1,\dots,p$, are equal to $b$, then $b^2$ occurs as a factor of $\left((x_{j_1}\varphi)\cdots (x_{j_p}\varphi)a\right)^2$, and if $x_{j_k}\varphi=e$ for all $k=1,\dots,p$, then $\left((x_{j_1}\varphi)\cdots (x_{j_p}\varphi)a\right)^2=a^2$. Both options contradict \eqref{eq:sets}. Hence, \eqref{eq:sets} implies that exactly one  of the matrices $x_{j_k}\varphi$, $k=1,\dots,p$, is equal to $b$. Therefore, the set $H:=\{s_j\in S\mid x_j\varphi=b\}$ has a singleton intersection with each set $U_i$, $i=1,\dots,q$, so $H$ is a hitting set for the family $\{U_i\}$.
\end{proof}

\begin{remark}
The semigroup $(C_4,\cdot)$ is tightly related to the 6-element \emph{Brandt semigroup} $(B_2^1,\cdot)$, which is, quoting from \cite{JZ21}, `perhaps the most ubiquitous harbinger of complex behaviour in all finite semigroups'. Here $B_2^1$ stands for the set comprised of the following six $2\times 2$-matrices:
\[
\begin{pmatrix}
1 & 0 \\ 0 & 1
\end{pmatrix}, ~~
\begin{pmatrix}
1 & 0 \\ 0 & 0
\end{pmatrix}, ~~
\begin{pmatrix}
0 & 1 \\ 0 & 0
\end{pmatrix}, ~~
\begin{pmatrix}
0 & 0 \\ 1 & 0
\end{pmatrix}, ~~
\begin{pmatrix}
0 & 0 \\ 0 & 1
\end{pmatrix}, ~~
\begin{pmatrix}
0 & 0 \\ 0 & 0
\end{pmatrix}
\]
and $\cdot$ is the usual matrix multiplication. The map $C_4\to B_2^1$ defined by
\begin{gather*}
e\mapsto\begin{pmatrix}
1 & 0 \\ 0 & 1
\end{pmatrix}, ~~
ab\mapsto\begin{pmatrix}
1 & 0 \\ 0 & 0
\end{pmatrix}, ~~
a\mapsto\begin{pmatrix}
0 & 1 \\ 0 & 0
\end{pmatrix}, ~~
b\mapsto\begin{pmatrix}
0 & 0 \\ 1 & 0
\end{pmatrix}, ~~
ba\mapsto\begin{pmatrix}
0 & 0 \\ 0 & 1
\end{pmatrix},\\
a^2,b^2,a^2b,ba^2\mapsto\begin{pmatrix}
0 & 0 \\ 0 & 0
\end{pmatrix}
\end{gather*}
is easily seen to be a semigroup homomorphism, and this homomorphism induces an isomorphism between the Rees quotient of $(C_4,\cdot)$ over the ideal $\{a^2,b^2,a^2b,ba^2\}$ and $(B_2^1,\cdot)$.

It was proved by Seif~\cite{Se05} and, independently, Kl\'{\i}ma \cite{Kl09} that the semigroup $(B_2^1,\cdot)$ has $\mathsf{coNP}$-complete identity checking. Kl\'{\i}ma's argument carries over $(C_4,\cdot)$ with minimal adjustments. Still, for the reader's convenience, we have supplied Proposition~\ref{prop:c4} with a complete proof. We reduce from an $\mathsf{NP}$-complete problem other than that utilized by Kl\'{\i}ma, but re-use Kl\'{\i}ma's construction of semigroup words employed in the reduction.
\end{remark}

Studying the complexity of the identity checking problem is also interesting for algebras other than semigroups. In the context of the present paper, it appears natural to look at the class of finite \ais{}s.

Dealing with identity checking in the presence of the distributive law (as in rings or semirings) requires particular care in defining what is meant by the size of an instance. For example, the expression
\begin{equation}\label{eq:n-foldproduct}
(x_1 + y_1)\cdots(x_n + y_n)
\end{equation}
involves just $2n$ variables, yet when expanded using distributivity, it becomes a sum of $2^{n}$ words, each of length $n$. Should the size of \eqref{eq:n-foldproduct} be considered $2n$ or $n2^n$? Here we adopt the convention consistent with our definition of a semiring polynomial as a sum of words: the size of an instance $\mathbf{W}\approx\mathbf{W'}$ of the problem \textsc{Check-Id}$(T_n,+,\cdot)$, where $\mathbf{W}=\mathbf{w}_1+\dots+\mathbf{w}_p$ and $\mathbf{W'}=\mathbf{w}'_1+\dots+\mathbf{w}'_{p'}$ are semiring polynomials, is defined as
\[
\sum_{i=1}^p|\mathbf{w}_i|+\sum_{i=1}^{p'}|\mathbf{w}'_i|,
\]
that is, the total length of all words $\mathbf{w}_1,\dots,\mathbf{w}_p,\mathbf{w}'_1,\dots,\mathbf{w}'_{p'}$. Under this convention, checking semiring identities of triangular Boolean $n\times n$-matrices is almost as easy as checking semigroup identities.

\begin{proposition}
\label{prop:polynomial-ais}
For any $n$, there is a polynomial time algorithm to decide if an identity holds in the semiring of all upper triangular Boolean $n\times n$-matrices.
\end{proposition}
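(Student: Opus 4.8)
The plan is to reduce checking a semiring identity $\mathbf{W}\approx\mathbf{W'}$ to the combinatorial criterion of Corollary~\ref{cor:ais-identity}, and to show this criterion can be verified in time polynomial in the instance size $N:=\sum_i|\mathbf{w}_i|+\sum_i|\mathbf{w}'_i|$. First I would recall that, by the corollary, the identity holds in $(T_n,+,\cdot)$ if and only if every word $\mathbf{u}$ of length $k<n$ occurring in $\mathbf{W}$ with some gap sequence $G_1,\dots,G_{k+1}$ occurs in $\mathbf{W'}$ with gaps $G'_1,\dots,G'_{k+1}$ satisfying $G'_\ell\subseteq G_\ell$, and symmetrically with the roles of $\mathbf{W}$ and $\mathbf{W'}$ interchanged. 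Since an occurrence of a subword of length $k<n$ in a polynomial is, by definition, an occurrence in one of its constituent words, it suffices to enumerate such occurrences word by word.

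The key observation that keeps the running time polynomial is that $n$ is a fixed constant, so a subword of length $k\le n-1$ is determined by choosing at most $n-1$ positions inside a single word $\mathbf{w}_i$. For a fixed word of length $m\le N$, the number of occurrences of subwords of length exactly $k$ is $\binom{m}{k}\le m^{\,n-1}$, and summing over all words of the polynomial and over $k=0,1,\dots,n-1$ leaves the total number of occurrences across $\mathbf{W}$ (and likewise across $\mathbf{W'}$) bounded by $n\,N^{\,n-1}$, a polynomial in $N$ since $n$ is constant. For each chosen occurrence, a single left-to-right scan of the underlying word computes the $k+1$ gaps in time $O(N)$, so I can build, in time polynomial in $N$, two finite lists: one recording every triple (subword, occurrence, gap sequence) arising in $\mathbf{W}$, and one recording the analogous data for $\mathbf{W'}$.

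It then remains to verify the inclusion condition in both directions by comparing the two lists. For each occurrence recorded in the $\mathbf{W}$-list, with subword $\mathbf{u}$ and gaps $G_1,\dots,G_{k+1}$, I search the $\mathbf{W'}$-list for an occurrence of the same $\mathbf{u}$ whose gaps $G'_1,\dots,G'_{k+1}$ satisfy $G'_\ell\subseteq G_\ell$ for every $\ell$; the identity fails unless such a matching occurrence is found, and then I repeat the search with the roles reversed. Since each list has polynomially many entries and each pairwise comparison (equality of subwords together with $n$ set-inclusion tests on subsets of the variable alphabet) costs time polynomial in $N$, the whole comparison phase is polynomial. Thus the entire procedure decides whether $\mathbf{W}\approx\mathbf{W'}$ holds in $(T_n,+,\cdot)$ in time polynomial in $N$.

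I do not anticipate a genuine mathematical obstacle here, as the correctness is immediate from Corollary~\ref{cor:ais-identity}; the only point requiring care is the bookkeeping that establishes polynomiality. The crucial quantitative step is the bound $\binom{m}{k}\le m^{\,n-1}$ on the number of length-$k$ occurrences, which is polynomial precisely because the matrix dimension $n$ is a fixed parameter rather than part of the input. This mirrors the argument for the semigroup case in Proposition~\ref{prop:polynomial}, the only new feature being that occurrences must be collected across all the summand words of each polynomial rather than a single word, which changes the polynomial bound by at most a factor equal to the number of summands.
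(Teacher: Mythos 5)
Your proposal is correct and follows essentially the same route as the paper's own proof: both reduce the problem to the criterion of Corollary~\ref{cor:ais-identity}, enumerate all occurrences of subwords of every length $k<n$ across the summand words of each polynomial (bounded polynomially since $n$ is a fixed constant), compute the gaps by scanning, and compare the two resulting lists with set-inclusion tests in both directions. Your closing remark also matches the paper's own observation that, unlike the semigroup case of Proposition~\ref{prop:polynomial}, one must consider all lengths below $n$ because no analogue of Corollary~\ref{cor:sem-identity} is available for semiring identities.
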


\begin{proof} We adapt the argumentation from the proof of Proposition~\ref{prop:polynomial}. Take an arbitrary identity $\mathbf{W}\approx\mathbf{W'}$ and let $m$ be its size as defined above. Since $\mathbf{W}$ and $\mathbf{W'}$ each contain at least one word of length $\ge1$, the total length of the words in each polynomial is at most $m-1$. Hence both the number of words in each of the semiring polynomials $\mathbf{W}$ and $\mathbf{W'}$ and the length of every such word are at most $m-1$. To specify an occurrence of a subword of length $k<n$ in a word, one has to choose $k$ positions in that word. Hence, the number of all possible occurrences of subwords of length $k< n-1$ in every word in each of the semiring polynomials $\mathbf{W}$ and $\mathbf{W'}$ is at most $\sum_{k=0}^{n-1}\binom{m-1}{k}$. For each such occurrence, a single scan of the word suffices to compute corresponding gaps.  Thus, in time $\le2(m-1)^2\sum_{k=0}^{n-1}\binom{m-1}{k}$, which is a polynomial of $m$, we can compose two lists containing all subwords of length $k<n$ occurring in $\mathbf{W}$ and $\mathbf{W'}$ with gaps for each occurrence. Comparing the lists for $\mathbf{W}$ and $\mathbf{W'}$ lets us verify the condition of Corollary~\ref{cor:ais-identity}, and hence, we can check  whether the identity $\mathbf{W}\approx\mathbf{W'}$ holds in the \ais{} $(T_n,+,\cdot)$ in time polynomial in $m$.
\end{proof}

In the above proof of Proposition~\ref{prop:polynomial-ais} we had to consider subwords of all lengths $<n$ and not only those of length $n-1$ as we did when proving Proposition~\ref{prop:polynomial}. This is due to the absence, for semiring identities, of an analogue of Corollary~\ref{cor:sem-identity}, whose use simplifies the counting in the semigroup case.

\begin{remark}
If one considers identities whose sides are arbitrary terms built from variables by repeated applications of the operations + and $\cdot$ (such as the expression \eqref{eq:n-foldproduct}, for instance) and defines the size of such an identity as the sum of lengths of the left-hand and right-hand terms, the problem \textsc{Check-Id}$(T_n,+,\cdot)$ becomes $\mathsf{coNP}$-complete for each $n\ge 1$. This follows from a general result of Bloniarz, Hunt, and Rosenkrantz \cite[Theorem 3.1]{BHR84}.
\end{remark}

\subsection{Finite Basis Problem}\label{subsec:finbas} Here we use some concepts from equational logic and universal algebra; we provide the necessary definitions.

The \emph{variety} defined by a set $\Sigma$ of identities is the class of all algebras satisfying every identity from $\Sigma$. A variety is \emph{\fb} [\emph{\nfb}] if it can [respectively, cannot] be defined by a finite set of identities. Given an algebra $\mA$, the variety defined by the set of all identities satisfied by $\mA$ is called the \emph{variety generated by $\mA$} and denoted by $\var\mA$. We say that $\mA$ is \fb{} or \nfb{} if so is the variety $\var\mA$. The \emph{Finite Basis Problem} for a class of algebras is the question of classifying algebras in this class for being finitely or \nfb. For the classes of finite semigroups and finite semirings, the Finite Basis Problem has been intensely studied since the 1960s and, respectively, 2000s. We refer to \cite{GSV25} and \cite{JRZ22} as recent samples of advances in these areas.

A variety is said to be \emph{locally finite} if each of its finitely generated members is finite. A finite algebra $\mA$ is called \emph{inherently \nfb} if $\mA$ is not contained in any finitely based locally finite variety. It is well known that the variety generated by a finite algebra is locally finite (see, e.g., \cite[Theorem II.10.16]{BuSa81}). Hence, a finite algebra $\mA$ is \nfb{} whenever the variety $\var\mA$ contains an inherently \nfb\ algebra. In particular, an inherently \nfb\ algebra is \nfb.

A word $\mathbf{w}$ is called an \emph{isoterm} for a semigroup $(S,\cdot)$ if the only word $\mathbf{w'}$ such that $(S,\cdot)$ satisfies the identity $\mathbf{w}\approx\mathbf{w'}$ is the word $\mathbf{w}$ itself. Observe that if a word $\mathbf{w}$ is minimal or maximal for an ordered semigroup $(S,\cdot,\le)$, then $\mathbf{w}$ is an isoterm for the semigroup $(S,\cdot)$.

Recall a combinatorial characterization of inherently \nfb\ semigroups due to Mark Sapir~\cite{Sa87}. It uses the sequence $\{Z_m\}_{m=1,2,\dots}$ of \emph{Zimin words} defined inductively by $Z_1:=x_1$, $Z_{m+1}:=Z_mx_{m+1}Z_m$ where $x_1,x_2,\dots,x_m,\dots$ are distinct variables.

\begin{proposition}[\!{\mdseries\cite[Proposition 7]{Sa87}}]
\label{prop:ziminsemigroup}
A finite semigroup $(S,\cdot)$ is inherently \nfb\ if and only if all Zimin words are isoterms for $(S,\cdot)$.
\end{proposition}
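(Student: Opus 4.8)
The statement is Sapir's theorem, so the plan is to reproduce the structure of its proof, reducing both implications to a single bridge between \emph{local finiteness} of finitely based semigroup varieties and the satisfaction of a nontrivial \emph{Zimin identity} $Z_m\approx\mathbf{w}$ (one with $\mathbf{w}\neq Z_m$). The pivotal observation is purely logical: if $(S,\cdot)$ lies in a variety $\mathcal{V}=[\Sigma]$ and $\mathcal{V}$ satisfies some nontrivial $Z_m\approx\mathbf{w}$, then $(S,\cdot)$ satisfies it too, so $Z_m$ fails to be an isoterm for $(S,\cdot)$. Contrapositively, if all Zimin words are isoterms for $(S,\cdot)$, then no variety containing $(S,\cdot)$ can satisfy a nontrivial Zimin identity. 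Using the excerpt's definition of inherent non-finite-basedness as non-membership in any finitely based locally finite variety, the two directions then become: (i) every finitely based locally finite variety satisfies some nontrivial Zimin identity; and (ii) whenever $(S,\cdot)$ satisfies a nontrivial Zimin identity, it is a member of some finitely based locally finite variety.

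For the \emph{sufficiency} direction (all Zimin words isoterms $\Rightarrow$ inherently \nfb) I would argue by contradiction using (i). Assume $(S,\cdot)$ belongs to a finitely based locally finite variety $\mathcal{V}$. The combinatorial engine is the \emph{unavoidability} of the Zimin words, established by Zimin and independently by Bean, Ehrenfeucht and McNulty: over any finite alphabet every sufficiently long word contains an instance of the pattern $Z_m$. Local finiteness makes each relatively free semigroup $F_t$ of $\mathcal{V}$ on $t$ generators finite; feeding sufficiently long words through $F_t$ and exploiting that products of length exceeding $|F_t|$ must repeat values, one forces $\mathcal{V}$ to identify a $Z_m$-instance with a strictly shorter word, and pulling this back along the pattern substitution yields a nontrivial Zimin identity valid in $\mathcal{V}$. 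By the pivotal observation this contradicts $Z_m$ being an isoterm for $(S,\cdot)$. Hence $(S,\cdot)$ lies in no finitely based locally finite variety, i.e.\ it is inherently \nfb.

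For the \emph{necessity} direction I would prove the contrapositive: if some Zimin word $Z_m$ is not an isoterm for $(S,\cdot)$, then $(S,\cdot)$ is not inherently \nfb. Here $(S,\cdot)$ satisfies a nontrivial identity $Z_m\approx\mathbf{w}$. The task is to exhibit a \emph{finite} set $\Sigma_m$ of identities, all holding in $(S,\cdot)$ and including a suitable consequence of $Z_m\approx\mathbf{w}$, such that the variety $[\Sigma_m]$ is locally finite; membership $(S,\cdot)\in[\Sigma_m]$ then certifies that $(S,\cdot)$ sits inside a finitely based locally finite variety. Local finiteness of $[\Sigma_m]$ would be obtained from the structural theory of finite semigroups: a nontrivial Zimin collapse caps the depth of iterated factorizations, and together with periodicity (a Burnside-type bound, automatic for the finite $S$) this yields a uniform bound on the size of finitely generated members, in the spirit of the McNaughton--Zalcstein and Brown finiteness theorems.

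The main obstacle is the constructive heart of direction (ii): producing the finite axiom set $\Sigma_m$ and proving that $[\Sigma_m]$ is locally finite. This is where the real difficulty of Sapir's theorem resides, since it requires controlling how a single Zimin collapse propagates to bound arbitrarily long products, essentially a quantitative unavoidability statement fused with the solution of the Burnside problem for the relevant exponent. The unavoidability input for direction (i) is comparatively clean once cited, but the local-finiteness certification in (ii) is delicate and is the step I would expect to consume most of the work. Given its depth, in the present paper it is appropriate to invoke the result as \cite{Sa87} rather than to reprove it.
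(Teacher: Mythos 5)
The paper contains no proof of Proposition~\ref{prop:ziminsemigroup} to compare against: the statement is imported verbatim from Sapir's work with the attribution \cite[Proposition 7]{Sa87}, and the paper's own contribution is confined to verifying its hypothesis for $(T_n,\cdot)$ via Proposition~\ref{prop:ziminminimal}. Since your bottom line is precisely that the result should be invoked by citation rather than reproved, your proposal and the paper agree on the only decision that matters here, and your identification of which direction is deep (a nontrivial Zimin identity forcing membership in a finitely based locally finite variety) matches the actual state of affairs.

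Two steps in your sketch, however, are not merely ``delicate'' but incorrect as stated, and it is worth recording why. First, in your direction (i), the move ``pulling this back along the pattern substitution yields a nontrivial Zimin identity valid in $\mathcal{V}$'' is illegitimate: satisfaction of identities is \emph{preserved} under substitution, not \emph{reflected}, so a collapse of an instance $\sigma(Z_m)$ gives no collapse of $Z_m$ itself. Concretely, in the variety defined by $x^2\approx x^3$, the instance $y^2$ of $Z_1=x_1$ collapses via $y^2\approx y^3$, yet $x_1$ is an isoterm for that variety. Bridging exactly this gap --- transferring a collapse of a carefully chosen bounded-alphabet instance of a Zimin word back to the Zimin word, via an analysis of one-step derivations from the finite basis and the self-similar structure of $Z_m$ --- is the combinatorial heart of Sapir's argument, not a routine consequence of unavoidability. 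Second, in your direction (ii), a nontrivial Zimin collapse together with periodicity does \emph{not} bound finitely generated members: the variety defined by $x_1\approx x_1x_2^n$ and $x\approx x^{n+1}$ satisfies both (the first identity makes $Z_1$ fail to be an isoterm), yet it contains the free Burnside group $B(2,n)$, which is finitely generated and infinite for large odd $n$ by Novikov--Adian. The group part is a genuine obstruction that must be handled separately; this is exactly where finiteness of $S$ enters (its subgroups are finite, so they generate locally finite, finitely based group varieties by Oates--Powell), and Sapir combines this with his own Burnside-type theorem for semigroup varieties --- not with McNaughton--Zalcstein or Brown, which concern matrix semigroups and ideal extensions rather than arbitrary members of a variety. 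None of this affects your final recommendation, which coincides with what the paper does.
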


Our description of inequalities of the ordered semigroup $(T_n,\cdot,\le)$ allows to establish the following property of Zimin words.

\begin{proposition}
\label{prop:ziminminimal}
Zimin words are minimal for the ordered semigroup $(T_n,\cdot,\le)$ if $n\ge 3$.
\end{proposition}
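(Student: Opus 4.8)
The plan is to invoke the inequality criterion of Theorem~\ref{thm:sem-inequality} and then induct on $m$, exploiting the self-similarity of the Zimin words. Writing $\nu_2(i)$ for the $2$-adic valuation of $i$, a one-line induction from $Z_m=Z_{m-1}x_mZ_{m-1}$ gives $Z_m=z_1\cdots z_{2^m-1}$ with $z_i=x_{\nu_2(i)+1}$. Two consequences will drive everything: (a) $z_i=x_1$ exactly when $i$ is odd, so every length-$2$ factor of $Z_m$ contains precisely one occurrence of $x_1$; and (b) the subsequence of $Z_m$ on its even positions is $Z_{m-1}$ with every variable index raised by one.

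Fix $n\ge 3$ and, aiming at minimality, suppose $\mathbf{w'}\preccurlyeq Z_m$ with $\mathbf{w'}\ne Z_m$. By Theorem~\ref{thm:sem-inequality} we have $|\mathbf{w'}|\ge n$, and by repeated use of Lemma~\ref{lem:long words} every word of length $\le n-1$ occurring in $\mathbf{w'}$, in particular of length $1$ and $2$, occurs in $Z_m$ with contained gaps. The base case $m=1$ is immediate, since $Z_1=x_1$ has no subword of length $n-1\ge2$ while $\mathbf{w'}$ does. For the inductive step I would first pin down the shape of $\mathbf{w'}$: its first letter has empty left gap, so it matches in $Z_m$ with empty left gap, forcing $w'_1=x_1$, and symmetrically $w'_{|\mathbf{w'}|}=x_1$; moreover each length-$2$ factor of $\mathbf{w'}$ has empty middle gap, hence matches $Z_m$ as a genuine factor, and by (a) therefore contains exactly one $x_1$. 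Thus $\mathbf{w'}$ strictly alternates between $x_1$ and non-$x_1$ letters and, starting and ending with $x_1$, takes the form $\mathbf{w'}=x_1y_1x_1y_2\cdots y_tx_1$ with each $y_i\in\{x_2,\dots,x_m\}$ (the last membership coming from the length-$1$ condition).

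The heart of the argument is to descend to $\mathbf{y}:=y_1\cdots y_t$, read over the alphabet shifted down by one (so $\mathbf{y}$ is a word over $\{x_1,\dots,x_{m-1}\}$), and prove $\mathbf{y}\preccurlyeq Z_{m-1}$. Here I would verify that the length-$k$ subword conditions of $\mathbf{y}$ against $Z_{m-1}$ coincide with those of $\mathbf{w'}$ against $Z_m$: a non-$x_1$ letter can only be matched at an even position of $Z_m$, and since an $x_1$ sits before, after, and between any chosen non-$x_1$ letters of $\mathbf{w'}$ and likewise in $Z_m$, the ubiquitous $x_1$ cancels from every gap on both sides, leaving exactly the comparison of even subsequences, which by (b) is $\mathbf{y}$ against $Z_{m-1}$. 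Applying this to subwords of length $n-1$ (and disposing of the case $|\mathbf{y}|<n$ by noting that then all of $\mathbf{y}$ is itself a short subword forced, by empty outer gaps, to equal $Z_{m-1}$) yields $\mathbf{y}\preccurlyeq Z_{m-1}$; the inductive hypothesis then gives $\mathbf{y}=Z_{m-1}$, so $y_i=z_{2i}$ and $\mathbf{w'}=Z_m$, contradicting $\mathbf{w'}\ne Z_m$.

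The main obstacle is precisely this faithful reduction $\mathbf{w'}\preccurlyeq Z_m\leadsto\mathbf{y}\preccurlyeq Z_{m-1}$: one must check that the gap-containment relations survive intact after the $x_1$'s are deleted, tracking the correspondence of occurrences in both directions, and must cleanly handle the short-$\mathbf{y}$ corner cases so that the inductive hypothesis applies uniformly. The supporting rigidity — that $\mathbf{w'}$ is forced into the perfectly alternating form, resting on the fact that every length-$2$ factor of $Z_m$ carries exactly one $x_1$ — is what makes the cancellation legitimate, and is the other place where care is needed.
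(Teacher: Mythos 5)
Your proof is correct and completable, but it takes a genuinely different route from the paper's precisely at the step you flag as the main obstacle. The paper has the same skeleton --- induction on $m$, with the descent from $Z_m$ to $Z_{m-1}$ achieved by erasing $x_1$ --- but it obtains the descent by a semantic one-liner rather than a combinatorial transfer: since $T_n$ contains the identity matrix, substituting it for $x_1$ in $\mathbf{w}\preccurlyeq Z_m$ immediately shows that the $x_1$-erased inequality holds in $(T_n,\cdot,\le)$, so the induction hypothesis applies at once and yields $\mathbf{w}=x_1^{\varepsilon_1}x_2x_1^{\varepsilon_2}x_3\cdots x_2x_1^{\varepsilon_{2^m}}$; only afterwards does the paper use length-$2$ gap conditions (no factor $x_1^2$, no factor $x_jx_k$ with $j,k>1$, plus the endpoint conditions) to force every $\varepsilon_i=1$. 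You reverse the two phases --- alternation first, deletion second --- and therefore must prove the deletion step syntactically. I checked that your sketch does go through: lifting an occurrence of a subword $\mathbf{u}$ of $\mathbf{y}$ into $\mathbf{w'}$ turns each gap $H_\ell$ into $H_\ell\cup\{x_1\}$; Theorem~\ref{thm:ais-inequality} then gives an occurrence in $Z_m$ with gaps $G'_\ell\subseteq H_\ell\cup\{x_1\}$; all letters of $\mathbf{u}$ necessarily land on even positions of $Z_m$; and projecting onto the even subsequence produces gaps $G'_\ell\setminus\{x_1\}\subseteq H_\ell$, which is exactly what Theorem~\ref{thm:sem-inequality} requires (your corner case $|\mathbf{y}|<n$ also works, with all projected gaps forced empty so that $\mathbf{y}$ equals the shifted $Z_{m-1}$ outright). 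The trade-off: your argument stays entirely inside the combinatorial world of the gap criterion and never uses that $T_n$ is a monoid, but pays with the occurrence-tracking; the paper's identity-matrix substitution makes that work evaporate and, together with its preliminary reduction to $n=3$ via the corner embedding of $T_3$ into $T_n$ (which you avoid by working with general $n\ge 3$ directly), gives a substantially shorter proof.
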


\begin{proof} Clearly, the map $T_{n-1}\to T_n$ that sends each matrix $A\in T_{n-1}$ to the $n\times n$-matrix
\[
\begin{pmatrix}&&&0\\&A&&\vdots\\&&&0\\0&\cdots&0&1\end{pmatrix}
\]
is an embedding of ordered semigroups. Therefore, it suffices to show that each Zimin word $Z_m$ is minimal for $(T_3,\cdot,\le)$.

We induct on $m$. When $m=1$, we have $Z_1=x_1$. If for some word $\mathbf{w}$, the inequality $\mathbf{w}\preccurlyeq x_1$ holds in $(T_3,\cdot,\le)$, then by Theorem~\ref{thm:sem-inequality}, either $\mathbf{w}=x_1$ or $|\mathbf{w}|\ge3$ and every word of length 2 that occurs in $\mathbf{w}$ must occur in $x_1$. The latter case is obviously impossible.

Now assume that $m>1$ and the Zimin word $Z_{m-1}$ is minimal for $(T_3,\cdot,\le)$. Consider the Zimin word $Z_m$ and let $\mathbf{w}$ be any word such that the inequality $\mathbf{w}\preccurlyeq Z_m$ holds in $(T_3,\cdot,\le)$. Substituting the identity matrix for the variable $x_1$ in the inequality, we get that $(T_3,\cdot,{\le})$ satisfies the inequality $\mathbf{w'}\preccurlyeq Z'_m$ where the words $\mathbf{w'}$ and $Z'_m$ are obtained from  $\mathbf{w}$ and, respectively, $Z_m$ by removing all occurrences of $x_1$. The word $Z'_m$ is nothing but the Zimin word $Z_{m-1}$ with the subscript of each variable increased by 1. By the induction assumption, we have $\mathbf{w'}=Z'_m$. Since the word $\mathbf{w}$ becomes $Z'_m$ after removing all occurrences of $x_1$, we conclude that
\[
\mathbf{w}=x_1^{\varepsilon_1}x_2x_1^{\varepsilon_2}x_3x_1^{\varepsilon_3}x_2x_1^{\varepsilon_4}x_4\cdots x_3x_1^{\varepsilon_{2^m-1}}x_2x_1^{\varepsilon_{2^m}},
\]
where $\varepsilon_i$, $i=1,2,\dots,2^m$, are nonnegative integers. Thus, the word $\mathbf{w}$ is obtained from $Z_m$ by substituting the factor $x_1^{\varepsilon_i}$ for the $i$-th from the left occurrence of the variable $x_1$.

If $\varepsilon_i>1$ for some $i=1,2,\dots,2^m$, then the word $x_1^2$ of length 2 occurs in $\mathbf{w}$ with gaps $G_1,G_2,G_3$ such that $G_2=\varnothing$. Since the inequality $\mathbf{w}\preccurlyeq Z_m$ holds in $(T_3,\cdot,\le)$, Theorem~\ref{thm:sem-inequality} ensures that the word $x_1^2$ occurs in $Z_m$ with gaps $G'_1,G'_2,G'_3$ such that $G'_\ell\subseteq G_\ell$ for each $\ell=1,2,3$. Then $G'_2=\varnothing$ which would mean that $x_1^2$ is as a factor of $Z_m$ but by the construction of Zimin words, every two occurrences of the variable $x_1$ in $Z_m$ are separated by another variable. Thus, the option $\varepsilon_i>1$ is excluded.

If $\varepsilon_i=0$ for some $i=2,\dots,2^m-1$, then for some $x_j$ and $x_k$ with $j,k>1$, the word $x_jx_k$ of length 2 occurs in $\mathbf{w}$ with gaps $G_1,G_2,G_3$ such that $G_2=\varnothing$. As in the preceding paragraph, we conclude that $x_jx_k$ must occur in $Z_m$ with gaps $G'_1,G'_2,G'_3$ such that the ``inner'' gap $G'_2$ is empty. It would mean that $x_jx_k$ is a factor of $Z_m$ but in $Z_m$, the variable $x_1$ appears between any two occurrences of variables with subscripts greater than 1.

If $\varepsilon_1=0$, the word $x_2x_1$ of length 2 occurs in $\mathbf{w}$ with gaps $G_1,G_2,G_3$ such that $G_1=\varnothing$. Then it must occur in $Z_m$ with gaps $G'_1,G'_2,G'_3$ such that $G'_1=\varnothing$, which would mean that $Z_m$ starts with $x_2$ while it starts with $x_1$. A symmetric argument excludes the option $\varepsilon_{2^m}=0$.

We have shown that $\varepsilon_i=1$ for each $i=1,2,\dots,2^m$, and hence $\mathbf{w}=Z_m$.
\end{proof}

Proposition~\ref{prop:ziminminimal} implies that all Zimin words are isoterms for the semigroup $(T_n,\cdot)$ if $n\ge 3$. Combining this observation with Proposition~\ref{prop:ziminsemigroup} yields the following:
\begin{corollary}
\label{cor:infb}
For each $n\ge 3$, the semigroup of all Boolean upper triangular $n\times n$-matrices is inherently \nfb.
\end{corollary}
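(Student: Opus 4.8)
The plan is to deduce Corollary~\ref{cor:infb} directly by chaining together two results that are already available: Proposition~\ref{prop:ziminminimal}, which tells us the Zimin words $Z_m$ are minimal for the ordered semigroup $(T_n,\cdot,\le)$ when $n\ge 3$, and Proposition~\ref{prop:ziminsemigroup}, Sapir's combinatorial criterion saying that a finite semigroup is inherently \nfb{} precisely when every Zimin word is an isoterm for it. The only gap to bridge is the implication from \emph{minimal} to \emph{isoterm}, and this has in fact already been recorded in the text: immediately before Proposition~\ref{prop:ziminsemigroup} it is observed that if a word $\mathbf{w}$ is minimal (or maximal) for an ordered semigroup $(S,\cdot,\le)$, then $\mathbf{w}$ is an isoterm for the underlying semigroup $(S,\cdot)$.

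\begin{proof}
Fix $n\ge 3$. By Proposition~\ref{prop:ziminminimal}, every Zimin word $Z_m$ is minimal for the ordered semigroup $(T_n,\cdot,\le)$. As noted prior to Proposition~\ref{prop:ziminsemigroup}, a word that is minimal for an ordered semigroup $(S,\cdot,\le)$ is an isoterm for the semigroup $(S,\cdot)$: indeed, if $(T_n,\cdot)$ satisfies $Z_m\approx\mathbf{w'}$, then $(T_n,\cdot,\le)$ satisfies both inequalities $\mathbf{w'}\preccurlyeq Z_m$ and $Z_m\preccurlyeq\mathbf{w'}$; the first of these together with minimality of $Z_m$ forces $\mathbf{w'}=Z_m$. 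Hence every Zimin word is an isoterm for $(T_n,\cdot)$. By Proposition~\ref{prop:ziminsemigroup}, the semigroup $(T_n,\cdot)$ is therefore inherently \nfb.
\end{proof}

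The argument is essentially an assembly of the two quoted propositions, so there is no genuine obstacle at this stage: all the real work has already been carried out in establishing minimality of the Zimin words in Proposition~\ref{prop:ziminminimal}. The only point requiring a moment's care is the passage from minimality in the ordered semigroup $(T_n,\cdot,\le)$ to the isoterm property in the plain semigroup $(T_n,\cdot)$, and this is handled by recalling that an identity $\mathbf{w}\approx\mathbf{w'}$ holding in $(T_n,\cdot)$ splits into the two inequalities $\mathbf{w}\preccurlyeq\mathbf{w'}$ and $\mathbf{w'}\preccurlyeq\mathbf{w}$ holding in $(T_n,\cdot,\le)$, after which minimality applied to the inequality with $Z_m$ on the upper side pins down $\mathbf{w'}=Z_m$.
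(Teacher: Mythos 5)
Your proposal is correct and follows exactly the paper's own route: Proposition~\ref{prop:ziminminimal} gives minimality of the Zimin words for $(T_n,\cdot,\le)$, the observation recorded before Proposition~\ref{prop:ziminsemigroup} converts minimality into the isoterm property for $(T_n,\cdot)$, and Sapir's criterion then yields the conclusion. Your explicit unpacking of the minimal-implies-isoterm step (splitting the identity into two inequalities and applying minimality to $\mathbf{w'}\preccurlyeq Z_m$) is a correct elaboration of what the paper leaves implicit.
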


While Proposition~\ref{prop:ziminminimal} appears new, Corollary~\ref{cor:infb} is known. For $n>3$, the result was obtained in \cite{VG04}, and the case $n=3$ was settled in \cite{LL11}. Both \cite{VG04} and \cite{LL11} utilized a semantic approach that did not require characterizing the identities of $(T_n,\cdot)$.

For the sake of completeness, we mention that the semigroup $(T_2,\cdot)$ is \fb; an explicit finite basis for the identities of $(T_2,\cdot)$ is found in~\cite[Theorem 3.5]{LL11}.

What can be said about the Finite Basis Problem for the \ais{} $(T_n,+,\cdot)$? A (partial) semiring analog of Proposition~\ref{prop:ziminsemigroup} was found by Dolinka~\cite{Do09}.

\begin{proposition}[\!{\mdseries\cite[Theorem B]{Do09}}]
\label{prop:ziminsemiring}
A finite \ais{} $(S,+,\cdot)$ with $0$ is inherently \nfb\ if each Zimin word is both minimal and maximal for $(S,+,\cdot)$.
\end{proposition}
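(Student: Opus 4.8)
The statement is the semiring counterpart of Sapir's criterion (Proposition~\ref{prop:ziminsemigroup}), and I would prove it by transplanting Sapir's inherent non-finite-basis machinery from semigroups to \ais{}s. Assume, for contradiction, that a finite \ais{} $(S,+,\cdot)$ with $0$ for which every Zimin word $Z_m$ is simultaneously minimal and maximal nonetheless lies in some \fb{} locally finite variety $V$ of \ais{}s, defined by a finite set $\Sigma$ of semiring identities. The whole proof then reduces to a single \emph{key lemma}: every \fb{} locally finite variety of \ais{}s satisfies, for some $m$, a nontrivial inequality in which one side is the Zimin word $Z_m$ and the other is a polynomial $\mathbf{W}\ne Z_m$. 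Granting this, that inequality holds throughout $V$, hence in $(S,+,\cdot)\in V$, so $Z_m$ is either not minimal (if the inequality reads $\mathbf{W}\preccurlyeq Z_m$) or not maximal (if it reads $Z_m\preccurlyeq\mathbf{W}$) --- contradicting the hypothesis in either case. This is precisely why \emph{both} extremality conditions are required: in the ordered world an identity splits into two inequalities, and a priori the witness may fall on either side.

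To prove the key lemma I would exploit the interplay between the two finiteness assumptions on $V$. Local finiteness makes each relatively free \ais{} of $V$ on a finite alphabet finite, so that as $m$ increases the values of the many factors generated by the recursion $Z_{m+1}=Z_mx_{m+1}Z_m$ are forced to coincide; such a coincidence is a nontrivial relation holding in $V$. The finite axiomatization by $\Sigma$ then controls \emph{how} such relations arise: each is a consequence of $\Sigma$ by a derivation of bounded rank, and Zimin's classical theorem on the unavoidability of the patterns $Z_m$ (every sufficiently long word over a fixed finite alphabet contains an instance of the pattern $Z_m$) guarantees that a genuine collapse must occur once $m$ exceeds a threshold determined by $\Sigma$. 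The role of the zero element is to transport this collapse from the purely multiplicative setting into the additive one: substituting the auxiliary variables to $0$ annihilates the unwanted factors multiplicatively while leaving the additive comparison intact, because $0$ is the least element of the natural order~\eqref{eq:order}. This is what lets one \emph{localize} the abstract collapse to a single nontrivial inequality carrying $Z_m$ on one side.

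I expect this localization step to be the main obstacle, and it is exactly the point at which the semiring case departs from the semigroup argument behind Proposition~\ref{prop:ziminsemigroup}. In the semigroup setting one extracts a nontrivial identity $Z_m\approx\mathbf{w}$; here one must instead produce a one-sided inequality, track which direction survives, and ensure that passing through the polynomial (sum) structure of \ais{}s does not destroy the comparison --- all of which hinges on the absorbing behaviour of $0$ and on the self-similar shape of the Zimin words. Once this single inequality is in hand, the contradiction with minimality or maximality is immediate, and the proposition follows.
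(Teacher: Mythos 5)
The paper does not prove this proposition at all: it is quoted verbatim from Dolinka \cite[Theorem B]{Do09}, so your attempt has to stand on its own as a proof of Dolinka's theorem, and it does not. Your ``key lemma'' --- that every \fb{} locally finite variety of \ais{}s satisfies, for some $m$, a nontrivial inequality with $Z_m$ on one side --- is not a reduction of the problem; it \emph{is} the problem. It is the semiring analogue of the hard half of Sapir's criterion (Proposition~\ref{prop:ziminsemigroup}), and the ingredients you list for it do not assemble into an argument. The pigeonhole you invoke for the recursion $Z_{m+1}=Z_mx_{m+1}Z_m$ does not apply: the alphabet of $Z_m$ grows with $m$, so the relevant relatively free \ais{}s change with $m$, and the repeated factors inside a single Zimin word are literally identical words, so their values coincide trivially rather than by any forced collapse. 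Likewise, ``derivations of bounded rank'' is not a property that a finite basis confers; the actual argument (in Sapir's work and in \cite{Do09}) analyzes a single application of a basis identity inside an unbounded derivation, located via the unavoidability of Zimin words. All of this is the genuine technical content of \cite{Do09}, and asserting it as a lemma does not establish it.

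Moreover, even granting your key lemma as stated, the deduction of a contradiction is broken in the maximality direction. Minimality is fine, because addition is a join for the order \eqref{eq:order}: an inequality $\mathbf{W}\preccurlyeq Z_m$ holds in $(S,+,\cdot)$ if and only if $\mathbf{w}_i\preccurlyeq Z_m$ holds for every summand $\mathbf{w}_i$ of $\mathbf{W}$, so a witness $\mathbf{W}\ne Z_m$ below $Z_m$ does yield a \emph{word} $\mathbf{w}_i\ne Z_m$ with $\mathbf{w}_i\preccurlyeq Z_m$, contradicting minimality. But upper bounds by sums do not split: maximality, as defined in the paper (and as used when the paper tests applicability of this proposition to $(T_n,+,\cdot)$), only forbids \emph{word} inequalities $Z_m\preccurlyeq\mathbf{w}'$ with $\mathbf{w}'\ne Z_m$. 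An inequality $Z_m\preccurlyeq\mathbf{W}$ with $\mathbf{W}$ a polynomial $\ne Z_m$ contradicts nothing: it holds trivially whenever $Z_m$ is a summand of $\mathbf{W}$ (so your undefined word ``nontrivial'' is carrying the entire weight), and even a nontrivially valid such inequality need not dominate $Z_m$ by any single word. The paper itself supplies the counterexample pattern: $xy\approx x^2y+xy^2$ holds in $(T_2,+,\cdot)$ by Example~\ref{ex:two identities}(ii), so $xy\preccurlyeq x^2y+xy^2$ holds there, yet neither $xy\preccurlyeq x^2y$ nor $xy\preccurlyeq xy^2$ holds in $(T_2,\cdot,\le)$. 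So to be usable your key lemma must be strengthened to produce a word on the upper side in the maximality case, and proving that strengthened statement --- pushing a nontrivial comparison through the additive (polynomial) structure, which is where the zero element and its absorbing behaviour really enter --- is exactly the obstacle you flag in your last paragraph but never overcome.
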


Because of Proposition~\ref{prop:ziminminimal}, one could have hoped to apply Proposition~\ref{prop:ziminsemiring} to the \ais{} $(T_n,+,\cdot)$ with $n\ge 3$ by showing that all Zimin words are maximal for the ordered semigroup $(T_n,\cdot,\le)$ with $n\ge 3$. However, in reality, for each $n$, there exists a Zimin word that is not maximal for $(T_n,\cdot,\le)$. For instance, using Theorem~\ref{thm:sem-inequality}, it is easy to verify that the strict inequality $Z_3\prec x_1x_2x_1^2x_3 x_1x_2x_1$ holds in the ordered semigroup $(T_3,\cdot,\le)$.

Since Proposition~\ref{prop:ziminsemiring} (unlike Proposition~\ref{prop:ziminsemigroup}) is only a sufficient condition for being inherently \nfb, the fact that Zimin words, in general, are not maximal for $(T_n,\cdot,\le)$ does not exclude the possibility that the \ais{}s $(T_n,+,\cdot)$ are inherently \nfb\ for sufficiently large $n$. Currently, we only know that the \ais{} $(T_3,+,\cdot)$ is \nfb. This is a recent result due to Sergey Gusev and the author~\cite{GV25} which is achieved via a semantic approach.

As in Subsect.~\ref{subsec:complexity}, we want to put the results just discussed in a broader context. For this, we report the state-of-the-art on the Finite Basis Problem for other semigroups of triangular matrices. In the tropical case, it is known that the semigroup of all upper triangular tropical $n\times n$-matrices is \nfb{} for $n=2$ \cite{CHLS16} and $n=3$ \cite{HZL21} while for $n\ge 4$, the problem remains open. The current situation with the semigroup $(T_n(q),\cdot)$ of all upper triangular $n\times n$-matrices over a finite field with $q$ elements is the opposite in a sense. In~\cite{VG03}, it was proved that the semigroup $(T_n(q),\cdot)$ is inherently \nfb{} if and only if $q>2$ and $n\ge4$, and in~\cite{GSV25}, the semigroup $(T_n(2),\cdot)$ with $n\ge4$ was shown to be \nfb. The Finite Basis Problem for the semigroups $(T_2(q),\cdot)$ and $(T_3(q),\cdot)$ remains open with the only exception of the semigroup $(T_2(2),\cdot)$ for whose identities an explicit finite basis was found in~\cite[Theorem 3.1]{ZhLL12}.

\subsection{Connections with formal language theory}\label{subsec:languages}
A \emph{monoid} is a semigroup possessing an identity element. All matrix semigroups considered above were in fact monoids as each of them contained the identity matrix of an appropriate size. The \emph{pseudovariety} generated by a set of finite monoids is the least class of finite monoids containing the set and closed under taking homomorphic images, submonoids and finite direct products. Pseudovarieties of finite monoids are actively studied because of their tight connections to certain classes of recognizable languages via the Eilenberg correspondence (\!\cite{Ei76}, see also~\cite{Pi86}). The pseudovariety generated by a single finite monoid is known to consist of all finite monoids satisfying a certain system of identities~\cite[Corollary II.1.3]{Pi86}. In other words, such a pseudovariety is nothing but the set of all finite members of a monoid variety.

Let $\Sigma$ be a finite set of variables. Denote by $\Sigma^*$ the set of all words whose variables come from $\Sigma$, including the empty word. Under concatenation, $(\Sigma^*,\cdot)$ is a monoid, with the empty word serving as an identity element. A \emph{language over} $\Sigma$ is any subset of $\Sigma^*$. By a \emph{Boolean combination} of a family $\{L_i\}$ of languages over $\Sigma$ we mean any language that can be produced from $L_i$'s by repeated applications of the set-theoretical operations of union or intersection of two sets and taking the complement of a set. For two languages $L,K\subseteq\Sigma^*$ and a variable $x\in\Sigma$, we write $LxK$ for the language $\{\mathbf{u}x\mathbf{v} \mid \mathbf{u}\in L,\,\mathbf{v}\in K\}$.

A language $L\subseteq\Sigma^*$ is \emph{recognized} by a finite monoid $(M,\cdot)$ if there exist a monoid homomorphism $\varphi\colon\Sigma^*\to M$ and a subset $P\subseteq M$ such that $L$ equals the preimage $P\varphi^{-1}$ of $P$ under $\varphi$. A language is called \emph{recognizable} if it is recognized by some finite monoid.

Let $\mathbb{T}$ stand for the pseudovariety of finite monoids generated by all monoids of upper triangular Boolean $n\times n$-matrices, $n=1,2,\dotsc$. Pin and Straubing \cite{PiSt85} found the following combinatorial characterization of languages recognized by monoids in $\mathbb{T}$.
\begin{proposition}[\!{\mdseries\cite[Theorem 2]{PiSt85}}]
\label{prop:pinstraubing}
A language $L$ over a finite set\/ $\Sigma$ is recognized by a monoid in the pseudovariety $\mathbb{T}$ if and only if $L$ is a Boolean combination of languages of the form
\[
\Sigma_1^*u_1\Sigma_2^*\cdots u_k\Sigma_{k+1}^*
\]
where $k\ge1$, $u_1,\dots,u_k\in\Sigma$ and\/ $\Sigma_1,\Sigma_2,\dots,\Sigma_{k+1}$ are \textup(possibly empty\textup) subsets of\/ $\Sigma$.
\end{proposition}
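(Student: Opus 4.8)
The plan is to reduce the statement to a description of the languages recognized by a \emph{single} matrix monoid $(T_n,\cdot)$, and then to read off both implications from Lemma~\ref{lem:substitution} and from the path interpretation of matrix multiplication already exploited in the proof of Theorem~\ref{thm:ais-inequality}. The reduction rests on two standard facts from Eilenberg theory (see \cite{Ei76,Pi86}): since $\mathbb{T}$ is the pseudovariety \emph{generated} by the monoids $(T_n,\cdot)$, every monoid in $\mathbb{T}$ divides a finite direct product $T_{n_1}\times\cdots\times T_{n_r}$, so every language recognized by a monoid in $\mathbb{T}$ is already recognized by such a product; and a language recognized by a direct product is a Boolean combination of languages recognized by the individual factors, while conversely a Boolean combination of languages recognized by members of $\mathbb{T}$ is recognized by their product, which again lies in $\mathbb{T}$. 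Consequently, the languages recognized by monoids in $\mathbb{T}$ form exactly the Boolean algebra generated by the languages recognized by the individual monoids $(T_n,\cdot)$, and it suffices to prove: (a) each language $\Sigma_1^*u_1\Sigma_2^*\cdots u_k\Sigma_{k+1}^*$ is recognized by $(T_{k+1},\cdot)$; and (b) each language recognized by a single $(T_n,\cdot)$ is a Boolean combination of languages of the stated form.

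For (a), I would take $n:=k+1$ and the homomorphism $\varphi\colon\Sigma^*\to T_n$ determined on letters by the rule \eqref{eq:substitution}, reading $\Sigma_\ell$ in place of the gap $G_\ell$ and $\mathbf{u}=u_1\cdots u_k$ in the role of the distinguished subword; letters outside $\{u_1,\dots,u_k\}\cup\Sigma_1\cup\cdots\cup\Sigma_{k+1}$ are sent to the zero matrix. Choosing an auxiliary word $\mathbf{v}:=\mathbf{v}_1u_1\mathbf{v}_2\cdots u_k\mathbf{v}_{k+1}$ with $\alf(\mathbf{v}_\ell)=\Sigma_\ell$, so that $\varphi=\varphi_{\mathbf{u},\mathbf{v}}$, Lemma~\ref{lem:substitution} yields $\bigl(\mathbf{w}\varphi\bigr)_{1\,k+1}=1$ precisely when $\mathbf{u}$ occurs in $\mathbf{w}$ with gaps contained in the $\Sigma_\ell$, that is, precisely when $\mathbf{w}\in\Sigma_1^*u_1\cdots u_k\Sigma_{k+1}^*$; a stray letter outside the listed alphabet forces $\mathbf{w}\varphi=0$ and also excludes $\mathbf{w}$ from the language, so the two conditions still agree. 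Hence the language equals $P\varphi^{-1}$ for $P:=\{A\in T_n\mid A_{1\,k+1}=1\}$, as required.

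For (b), fix a homomorphism $\varphi\colon\Sigma^*\to T_n$ and a subset $P\subseteq T_n$. Writing $L=P\varphi^{-1}=\bigcup_{A\in P}A\varphi^{-1}$ and expressing each fibre $A\varphi^{-1}$ as the intersection of the sets $E_{ij}:=\{\mathbf{w}\mid(\mathbf{w}\varphi)_{ij}=1\}$ over positions with $A_{ij}=1$ and of their complements over positions with $A_{ij}=0$, it is enough to show that every $E_{ij}$ is a Boolean combination of languages of the stated form. Here I would expand $(\mathbf{w}\varphi)_{ij}$ exactly as in the proof of Theorem~\ref{thm:ais-inequality}: since all matrices $w_s\varphi$ are upper triangular, $(\mathbf{w}\varphi)_{ij}=1$ holds iff there is a monotone path $i=j_0\le j_1\le\cdots\le j_m=j$ through the states $\{1,\dots,n\}$ all of whose steps are realized by the corresponding letters of $\mathbf{w}$. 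Such a path decomposes into diagonal runs separated by strict jumps, so, setting $\Sigma_{ab}:=\{x\in\Sigma\mid(x\varphi)_{ab}=1\}$, one obtains
\[
E_{ij}=\bigcup\ \Sigma_{c_0c_0}^*a_1\Sigma_{c_1c_1}^*a_2\cdots a_t\Sigma_{c_tc_t}^*,
\]
the union ranging over all strictly increasing state sequences $i=c_0<c_1<\cdots<c_t=j$ (with $t\ge1$ when $i<j$) and all choices of jump letters $a_\alpha\in\Sigma_{c_{\alpha-1}c_\alpha}$. As $n$ is fixed and $\Sigma$ is finite, this is a finite union of languages of the stated form; in the diagonal case $i=j$ only $t=0$ survives and $E_{ii}=\Sigma_{ii}^*$, which is itself a Boolean combination (namely $\bigcap_{x\notin\Sigma_{ii}}\overline{\Sigma^*x\Sigma^*}$). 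This gives (b) and completes the argument.

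I expect the main obstacle to be the verification of the displayed equality for $E_{ij}$, i.e., checking that the monotone-path condition is faithfully captured by the union over diagonal-run/jump decompositions; this is the formal-language incarnation of the same combinatorics already carried out for inequalities in the proof of Theorem~\ref{thm:ais-inequality}, so I would model the bookkeeping (the treatment of the extreme gaps, the conventions $c_0=i$ and $c_t=j$, and the degenerate cases $i=j$ and empty transition sets) on that proof. The remaining ingredients — the division/product reduction and the closure of $\mathbb{T}$-recognizability under Boolean operations — are routine consequences of the Eilenberg machinery cited above.
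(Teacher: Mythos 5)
Your proof is correct, but it takes a genuinely different route from the paper's on the hard direction. The paper never proves Proposition~\ref{prop:pinstraubing} directly: it deduces it from the sharper per-$n$ statement, Proposition~\ref{prop:local}, which characterizes languages recognized by monoids in $\mathbb{T}_n$ (with the bound $k<n$), combined with the observation that $\mathbb{T}=\bigcup_n\mathbb{T}_n$. The easy direction there is handled exactly as in your part (a): Lemma~\ref{lem:substitution} applied to $\varphi_{\mathbf{u},\mathbf{v}}$ shows that each language of the form \eqref{eq:language} is recognized by $(T_n,\cdot)$, so on this point you and the paper coincide. For the converse, however, the paper argues syntactically: it passes to the $\mathbb{T}_n$-free monoid $F$ on $\Sigma$, notes that all words in a fibre of the canonical map $\eta\colon\Sigma^*\to F$ satisfy the same identities in $(T_n,\cdot)$, and then uses Lemma~\ref{lem:restate} (the language-theoretic restatement of Theorem~\ref{thm:ais-inequality}) to carve each fibre out of finitely many languages of the form \eqref{eq:language} and their complements. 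You instead reduce, via the standard Eilenberg facts (every member of $\mathbb{T}$ divides a finite product of the generators; a language recognized by a product is a Boolean combination of languages recognized by the factors), to a single homomorphism $\varphi\colon\Sigma^*\to T_n$, and then compute the entry-languages $E_{ij}$ explicitly by decomposing monotone paths into diagonal runs and strict jumps. That computation is correct, including the treatment of $E_{ii}=\Sigma_{ii}^*$ and the degenerate cases (the empty language and $\Sigma^*$ are harmless Boolean combinations), and since a path from $i$ to $j$ has $t\le j-i\le n-1$ jumps it even recovers the $k<n$ bound. Your argument avoids both the free monoid and Theorem~\ref{thm:ais-inequality} entirely, and is in fact closer in spirit to Pin and Straubing's original proof; what the paper's route buys is the refinement itself, since Proposition~\ref{prop:local} is the paper's new contribution (solving the Kl\'ima--Pol\'ak problem) and Proposition~\ref{prop:pinstraubing} falls out as a corollary, whereas your reduction through products of generators of varying sizes, as written, establishes only the union statement --- though it would yield the refinement too if carried out inside a single $\mathbb{T}_n$.
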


For each $n=1,2,\dotsc$, denote by $\mathbb{T}_n$ the pseudovariety of finite monoids generated by the monoid $(T_n,\cdot)$. Theorem~\ref{thm:ais-inequality} easily implies a similar to Proposition~\ref{prop:pinstraubing} characterization of languages recognized by monoids in $\mathbb{T}_n$.
\begin{proposition}
\label{prop:local}
A language $L$ over a finite set\/ $\Sigma$ is recognized by a monoid in the pseudovariety $\mathbb{T}_n$ if and only if $L$ is a Boolean combination of languages of the form
\begin{equation}\label{eq:language}
\Sigma_1^*u_1\Sigma_2^*\cdots u_k\Sigma_{k+1}^*
\end{equation}
where $1\le k<n$, $u_1,\dots,u_k\in\Sigma$ and\/ $\Sigma_1,\Sigma_2,\dots,\Sigma_{k+1}$ are \textup(possibly empty\textup) subsets of\/ $\Sigma$.
\end{proposition}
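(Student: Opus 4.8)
The plan is to mirror the structure of Pin and Straubing's Proposition~\ref{prop:pinstraubing}, but to control the length of the subwords by invoking Theorem~\ref{thm:ais-inequality} in place of the general triangularity argument. The conceptual bridge is the standard Eilenberg-style fact already quoted in the paper: since $\mathbb{T}_n$ is generated by the single finite monoid $(T_n,\cdot)$, it coincides with the finite members of the monoid variety $\var(T_n,\cdot)$, and a language is recognized by a monoid in $\mathbb{T}_n$ exactly when its syntactic monoid lies in $\mathbb{T}_n$. Thus the task reduces to a purely syntactic/combinatorial comparison, and the key object is Corollary~\ref{cor:ais-identity} (equivalently Theorem~\ref{thm:ais-inequality} for the semigroup case), which says that $(T_n,\cdot)$ can only ``see'' subwords of length $k<n$ together with their gaps.

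First I would prove the ``if'' direction. For a fixed word $\mathbf{u}=u_1\cdots u_k$ with $1\le k<n$ and fixed subsets $\Sigma_1,\dots,\Sigma_{k+1}\subseteq\Sigma$, I would show that the language \eqref{eq:language} is recognized by a suitable direct power of $(T_n,\cdot)$. Concretely, I would define a monoid homomorphism $\varphi\colon\Sigma^*\to T_n$ on generators by the rule analogous to \eqref{eq:substitution}: each letter $x\in\Sigma$ is sent to the matrix carrying a $1$ in position $(\ell,\ell+1)$ whenever $x=u_\ell$ and a $1$ in position $(\ell,\ell)$ whenever $x\in\Sigma_\ell$. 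By the computation underlying Lemma~\ref{lem:substitution}, a word $\mathbf{v}$ satisfies $(\mathbf{v}\varphi)_{1\,k+1}=1$ precisely when $\mathbf{u}$ occurs in $\mathbf{v}$ with all gaps contained in $\Sigma_1,\dots,\Sigma_{k+1}$; a short bookkeeping argument then pins down membership in the exact language \eqref{eq:language} as a Boolean combination of such ``contained-gap'' conditions for the finitely many choices of smaller gap-subsets. Since Boolean combinations of languages recognized by monoids in $\mathbb{T}_n$ are again recognized in $\mathbb{T}_n$ (the pseudovariety is closed under finite direct products and the recognizable languages recognized by a fixed pseudovariety form a Boolean algebra), this handles one implication.

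For the ``only if'' direction I would argue that every language recognized by $(T_n,\cdot)$ is a Boolean combination of languages \eqref{eq:language} with $k<n$. Here I would use that such a language $L=P\varphi^{-1}$ is a finite union of classes of the congruence $\ker\varphi$, and that two words lie in the same class of $\ker\varphi$ iff they induce the same matrix in $T_n$. The crucial point, supplied by Theorem~\ref{thm:ais-inequality} (or Lemma~\ref{lem:substitution}), is that the entry $(\mathbf{v}\varphi)_{ij}$ of the image of a word depends only on which subwords of length $<n$ occur in $\mathbf{v}$ and with which gaps; hence each $\ker\varphi$-class, and therefore $L$, is determined by finitely many membership conditions of the form ``$\mathbf{u}$ occurs with gaps inside $(\Sigma_1,\dots,\Sigma_{k+1})$'' with $k<n$, each of which is exactly (a Boolean combination of) a language \eqref{eq:language}.

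The main obstacle I anticipate is the careful translation between the two notions of ``gap'' — Pin--Straubing use languages $\Sigma_\ell^*$ recording \emph{which letters are allowed} in a gap, whereas Theorem~\ref{thm:ais-inequality} records the \emph{actual alphabet} $G_\ell$ of a gap together with a containment $G'_\ell\subseteq G_\ell$. Converting a statement about matrix entries into a \emph{clean} Boolean combination of the languages \eqref{eq:language} (rather than some a priori messier family) requires expressing an exact-gap condition $\alf(\mathbf{v}_\ell)=\Sigma_\ell$ via the contained-gap conditions provided by the lemma, i.e.\ writing equality as an intersection of ``$\subseteq$'' with complements of ``$\subsetneq$''. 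Once this inclusion–exclusion bookkeeping is set up, the only genuinely new ingredient beyond Pin--Straubing is the length bound $k<n$, which is precisely what Theorem~\ref{thm:ais-inequality} furnishes and what distinguishes $\mathbb{T}_n$ from the full pseudovariety $\mathbb{T}$; the rest of the proof is a specialization of the classical argument.
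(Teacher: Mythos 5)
Your proposal is correct in substance and, at its core, reproduces the paper's own argument. The ``if'' direction coincides with the paper's: the substitution \eqref{eq:substitution} together with Lemma~\ref{lem:substitution} exhibits each language \eqref{eq:language} as the preimage, under a homomorphism $\Sigma^*\to T_n$, of the set of matrices with $1$ in position $(1,k{+}1)$, and closure of the $\mathbb{T}_n$-recognizable languages under Boolean operations finishes (a single copy of $(T_n,\cdot)$ suffices; no direct power is needed). Your ``only if'' direction has the same mathematical content as the paper's but is packaged differently: where the paper factors an arbitrary recognizing homomorphism through the relatively free monoid of $\var(T_n,\cdot)$ and writes each class $q\eta^{-1}$ as an intersection of languages \eqref{eq:language} and complements of such languages, you work with kernel classes of the recognizing homomorphism and observe that, by Theorem~\ref{thm:ais-inequality}, two words lying in exactly the same languages \eqref{eq:language} (of which there are finitely many) satisfy an identity of $(T_n,\cdot)$, hence have equal images under every homomorphism into a member of $\mathbb{T}_n$; so every recognized language is a finite union of atoms of the finite Boolean algebra generated by the languages \eqref{eq:language}. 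The two organizations are interchangeable. One step you must make explicit: your kernel argument is written only for $\varphi\colon\Sigma^*\to T_n$, whereas an arbitrary $M\in\mathbb{T}_n$ must be handled --- either run the identical argument with $M$ in place of $T_n$ (it works, since $M$ satisfies all identities of $(T_n,\cdot)$), or insert the standard reduction from divisors of powers of $T_n$ to Boolean combinations of $T_n$-recognized languages.

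However, the ``main obstacle'' you anticipate does not exist, and the remedy you propose for it is wrong, so it should be excised rather than executed. There is no exact-gap condition to translate: membership of a word in \eqref{eq:language} \emph{is} the contained-gap condition, because $\mathbf{w}_\ell\in\Sigma_\ell^*$ means precisely $\alf(\mathbf{w}_\ell)\subseteq\Sigma_\ell$; this is exactly why Lemma~\ref{lem:substitution} applies with no bookkeeping at all. Moreover, exact-gap conditions genuinely cannot be recovered by your inclusion--exclusion scheme: ``$\mathbf{u}$ occurs with gaps exactly $(\Sigma_1,\dots,\Sigma_{k+1})$'' is in general \emph{not} a Boolean combination of languages \eqref{eq:language}. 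The paper's own example witnesses this: $x^2yx\approx xyx$ holds in $(T_2,\cdot)$, so these two words lie in exactly the same languages \eqref{eq:language} for $n=2$, yet $x$ occurs with gaps exactly $(\{x\},\{x,y\})$ in $x^2yx$ and not in $xyx$. For the same reason, your phrase that $(\mathbf{v}\varphi)_{ij}$ ``depends only on which subwords of length $<n$ occur in $\mathbf{v}$ and with which gaps'' names the wrong invariant if read as exact occurrence data (classes of that finer equivalence need not be Boolean combinations of languages \eqref{eq:language}); the invariant that Theorem~\ref{thm:ais-inequality} actually controls, and the one your final clause correctly falls back on, is the set of contained-gap conditions a word satisfies.
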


First, we restate the semigroup part of Theorem~\ref{thm:ais-inequality} in terms of languages \eqref{eq:language}.

\begin{lemma}
\label{lem:restate}
The ordered semigroup $(T_n,\cdot,\le)$ satisfies an inequality $\mathbf{w}\preccurlyeq\mathbf{w'}$ if and only if the word $\mathbf{w'}$ belongs to every language of the form \eqref{eq:language} that contains the word $\mathbf{w}$.
\end{lemma}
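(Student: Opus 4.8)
The plan is to translate the gap-inclusion condition of Theorem~\ref{thm:ais-inequality} into the membership condition of the lemma, by observing that a language of the form \eqref{eq:language} is precisely the set of all words in which the fixed word $u_1\cdots u_k$ occurs as a subword with all gaps contained in the prescribed subsets $\Sigma_1,\dots,\Sigma_{k+1}$. Concretely, for a word $\mathbf{v}$, one has $\mathbf{v}\in\Sigma_1^*u_1\Sigma_2^*\cdots u_k\Sigma_{k+1}^*$ if and only if there is a factorization $\mathbf{v}=\mathbf{v}_1u_1\mathbf{v}_2\cdots u_k\mathbf{v}_{k+1}$ with $\alf(\mathbf{v}_\ell)\subseteq\Sigma_\ell$ for each $\ell$; that is, $u_1\cdots u_k$ occurs in $\mathbf{v}$ with some gaps $G_1,\dots,G_{k+1}$ satisfying $G_\ell\subseteq\Sigma_\ell$. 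This is the key dictionary entry, and I would state and verify it first as a short preliminary observation.

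With that dictionary in hand I would prove both directions. For necessity, suppose $(T_n,\cdot,\le)$ satisfies $\mathbf{w}\preccurlyeq\mathbf{w'}$, and let $L=\Sigma_1^*u_1\Sigma_2^*\cdots u_k\Sigma_{k+1}^*$ with $1\le k<n$ be a language of the form \eqref{eq:language} containing $\mathbf{w}$. By the dictionary, $u_1\cdots u_k$ occurs in $\mathbf{w}$ with gaps $G_1,\dots,G_{k+1}$ where $G_\ell\subseteq\Sigma_\ell$. Since $k<n$, Theorem~\ref{thm:ais-inequality} (semigroup case) applies and yields an occurrence of $u_1\cdots u_k$ in $\mathbf{w'}$ with gaps $G'_\ell\subseteq G_\ell\subseteq\Sigma_\ell$; by the dictionary again, $\mathbf{w'}\in L$. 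For sufficiency, suppose $\mathbf{w'}$ lies in every language of the form \eqref{eq:language} containing $\mathbf{w}$, and let $u_1\cdots u_k$ be any word of length $k<n$ occurring in $\mathbf{w}$ with gaps $G_1,\dots,G_{k+1}$. Setting $\Sigma_\ell:=G_\ell$ produces a language $L$ of the form \eqref{eq:language} with $\mathbf{w}\in L$ (here the case $k=0$, i.e.\ the empty subword, needs a separate trivial remark since \eqref{eq:language} requires $k\ge1$, but the empty-word condition $\alf(\mathbf{w'})\subseteq\alf(\mathbf{w})$ follows from taking a length-one subword or is handled directly). By hypothesis $\mathbf{w'}\in L$, so by the dictionary $u_1\cdots u_k$ occurs in $\mathbf{w'}$ with gaps $G'_\ell\subseteq\Sigma_\ell=G_\ell$, which is exactly the condition of Theorem~\ref{thm:ais-inequality}; hence $\mathbf{w}\preccurlyeq\mathbf{w'}$ holds.

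I expect the main obstacle to be bookkeeping around edge cases rather than any genuine difficulty: one must make sure the quantifier structure matches (``every language containing $\mathbf{w}$'' on the one hand versus ``every subword-occurrence in $\mathbf{w}$'' on the other) and handle the boundary between the languages \eqref{eq:language}, which are indexed by $k\ge1$, and the empty-subword ($k=0$) clause that Theorem~\ref{thm:ais-inequality} also covers. The cleanest way to dispose of the latter is to note that the $k=0$ condition $\alf(\mathbf{w'})\subseteq\alf(\mathbf{w})$ is implied by the length-one conditions, or to absorb it by allowing $\Sigma^*$ itself as a degenerate language; I would pick one convention and state it explicitly so that the equivalence with Theorem~\ref{thm:ais-inequality} is exact. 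Everything else is a direct application of the dictionary together with the already-proved theorem.
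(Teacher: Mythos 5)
Your proposal is correct and follows essentially the same route as the paper: the same dictionary between membership in a language \eqref{eq:language} and occurrence of $u_1\cdots u_k$ with gaps contained in the $\Sigma_\ell$, the same application of Theorem~\ref{thm:ais-inequality} for necessity, and for sufficiency the same witnessing language $G_1^*u_1G_2^*\cdots u_kG_{k+1}^*$ (you argue directly where the paper argues by contraposition). Your explicit patch for the empty subword --- deriving $\alf(\mathbf{w'})\subseteq\alf(\mathbf{w})$ from a length-one subword condition --- is sound and in fact covers an edge case that the paper's own proof passes over silently.
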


\begin{proof}
\emph{Necessity.} The containment $\mathbf{w}\in\Sigma_1^*u_1\Sigma_2^*\cdots u_k\Sigma_{k+1}^*$ means that $\mathbf{w}$ decomposes as
\[
\mathbf{w} = \mathbf{w}_1u_1\mathbf{w}_2\cdots \mathbf{w}_ku_k\mathbf{w}_{k+1}
\]
with $\mathbf{w}_\ell\in\Sigma_\ell^*$ for each $\ell=1,2,\dots,k+1$. Then each gap $G_\ell=\alf(\mathbf{w}_\ell)$ is contained in $\Sigma_\ell$. If the inequality $\mathbf{w}\preccurlyeq\mathbf{w'}$ holds in $(T_n,\cdot,\le)$, then by Theorem~\ref{thm:ais-inequality}, there is an occurrence of $u_1\cdots u_k$ as a subword in the word $\mathbf{w'}$ with gaps $G'_1,G'_2,\dots,G'_{k+1}$ such that $G'_\ell\subseteq G_\ell$ for~each $\ell=1,2,\dots,k+1$, that is, a decomposition
\[
\mathbf{w'} = \mathbf{w'}_1u_1\mathbf{w'}_2\cdots \mathbf{w'}_ku_k\mathbf{w'}_{k+1}
\]
with $G'_\ell=\alf(\mathbf{w'}_\ell)$. Since $G'_\ell\subseteq G_\ell\subseteq\Sigma_\ell$, each factor $\mathbf{w'}_\ell$ lies in $\Sigma_\ell^*$ for $\ell=1,2,\dots,k{+}1$, and we conclude that $\mathbf{w'}\in\Sigma_1^*u_1\Sigma_2^*\cdots u_k\Sigma_{k+1}^*$. Thus, $\mathbf{w'}$ belongs to every language of the form \eqref{eq:language} containing $\mathbf{w}$.

\medskip

\noindent\emph{Sufficiency.} Suppose that the inequality $\mathbf{w}\preccurlyeq\mathbf{w'}$ fails in $(T_n,\cdot,\le)$. By Theorem~\ref{thm:ais-inequality}, this means that there exists a word $\mathbf{u}$ of length $k<n$ that distinguishes between the words $\mathbf{w}$ and $\mathbf{w'}$ as follows. In $\mathbf{w}$, the subword $\mathbf{u}$ occurs with gaps $G_1,G_2,\dots,G_{k+1}$, while in $\mathbf{w'}$, either no occurrence of $\mathbf{u}$ as a subword exists or, for every such occurrence, there is an index $\ell$ such that the $\ell$-th gap of that occurrence (counted from the left) is not contained in~$G_\ell$. In any case, $\mathbf{w}$ lies in the language $G_1^*u_1G_2^*\cdots u_kG_{k+1}^*$ of the form \eqref{eq:language}, where $u_\ell$ is the variable in the $\ell$-th position of $\mathbf{u}$, whereas $\mathbf{w'}$ does not belong to this language.
\end{proof}

\begin{proof}[Proof of Proposition~\ref{prop:local} ]
Denote the class of all Boolean combinations of languages of the form \eqref{eq:language} by $\mathcal{F}_n$.

\medskip

\noindent\emph{Necessity.} It is well known (and easy to verify) that for any pseudovariety $\mathbb{P}$ of finite monoids, the class of languages over $\Sigma$ recognized by monoids in $\mathbb{P}$ is closed under finite unions, finite intersections, and complementation. Therefore, to prove that every language in $\mathcal{F}_n$ is recognized by a monoid in $\mathbb{T}_n$, it suffices to establish this for languages of the form \eqref{eq:language}. We will show that the language \eqref{eq:language} is recognized by the monoid $(T_n,\cdot)$.

Let $\mathbf{u}:=u_1\cdots u_k$ and fix a word $\mathbf{v}$ of the form
\[
\mathbf{v} = \mathbf{v}_1u_1\mathbf{v}_2\cdots \mathbf{v}_ku_k\mathbf{v}_{k+1}
\]
where for each $\ell=1,2,\dots,k+1$, the word $\mathbf{v}_\ell$ is chosen so that $\alf(\mathbf{v}_\ell)=\Sigma_\ell$. Lemma~\ref{lem:substitution} implies that a word $\mathbf{w}$ belongs to the language \eqref{eq:language} if and only if $\bigl(\mathbf{w}\varphi_{\mathbf{u},\mathbf{v}}\bigr)_{1\,k+1}=1$. Hence, the language \eqref{eq:language} is the preimage of the set $\left\{\bigl(\alpha_{ij}\bigr)\in T_n\mid \alpha_{1\,k+1}=1\right\}$ under $\varphi_{\mathbf{u},\mathbf{v}}$.

\medskip

\noindent\emph{Sufficiency.} Let $(F,\cdot)$ be the $\mathbb{T}_n$-free monoid with the set $\Sigma$ of free generators; see \cite[Sections II.10 and II.11]{BuSa81} for a general discussion of free objects and their basic features. We will use the following properties of the free monoid all of which are specializations of well-known universal-algebraic facts.
\begin{enumerate}
  \item[$0^\circ$] The identity map on $\Sigma$ uniquely extends to a monoid homomorphism $\eta\colon\Sigma^*\to F$.
  \item[$1^\circ$] For all words $\mathbf{w},\mathbf{w'}\in\Sigma^*$, the identity $\mathbf{w}\approx\mathbf{w'}$ holds in $(T_n,\cdot)$ if and only if $\mathbf{w}\eta=\mathbf{w'}\eta$ (specialization of \cite[Theorem II.11.4]{BuSa81}).
  \item[$2^\circ$] For each monoid $(M,\cdot)$ in $\mathbb{T}_n$, every map $\Sigma\to M$ uniquely extends to a monoid homomorphism $F\to M$ (specialization of \cite[Theorem II.10.10]{BuSa81}).
  \item[$3^\circ$] The monoid $(F,\cdot)$ is finite (specialization of \cite[Theorem II.10.16]{BuSa81}).
\end{enumerate}

Now take any language $L$ over $\Sigma$ recognized by a monoid $(M,\cdot)$ in the pseudovariety $\mathbb{T}_n$. By definition, this means that $L=P\varphi^{-1}$ for some homomorphism $\varphi\colon\Sigma^*\to M$ and a subset $P\subseteq M$. By property $2^\circ$, there is a a monoid homomorphism $\psi\colon F\to M$ extending the map $\Sigma\to M$ given by $x\mapsto x\varphi$. Then $\varphi =\eta\psi$ and $L=P\varphi^{-1}=Q\eta^{-1}$, where $Q:=P\psi^{-1}$. The set $Q$ is finite (property $3^\circ$), and since $L=Q\eta^{-1}=\bigcup_{q\in Q}q\eta^{-1}$ and the class $\mathcal{F}_n$ is closed under finite unions, it suffices to show that $q\eta^{-1}\in\mathcal{F}_n$ for every $q\in F$.

By the definition, $q\eta^{-1}=\{\mathbf{w}\in\Sigma^*\mid \mathbf{w}\eta=q\}$. By property $1^\circ$, the identity $\mathbf{w}\approx\mathbf{w'}$ holds in $(T_n,\cdot)$ for all words $\mathbf{w},\mathbf{w'}\in q\eta^{-1}$.
Lemma~\ref{lem:restate} implies that $\mathbf{w}$ and $\mathbf{w'}$ lie in exactly the same languages of the form \eqref{eq:language}. Since $k<n$ and the set $\Sigma$ is finite, there are only finitely many choices for $u_1,\dots,u_k$ and for $\Sigma_1,\Sigma_2,\dots,\Sigma_{k+1}$; therefore the set of languages of the form \eqref{eq:language} is finite. Let $L_1,\dots,L_s$ be all languages of the form \eqref{eq:language} that contain a word in $q\eta^{-1}$ (and hence contain every word in in $q\eta^{-1}$). By Lemma~\ref{lem:restate}, the language $\bigcap_{p=1}^sL_p$ equals the set of all words $\mathbf{v}$ such that ordered semigroup $(T_n,\cdot,\le)$ satisfies each inequality $\mathbf{w}\preccurlyeq\mathbf{v}$ where $\mathbf{w}\in q\eta^{-1}$.  For every $\mathbf{v}\in\bigcap_{p=1}^sL_p\setminus q\eta^{-1}$ and every $\mathbf{w}\in q\eta^{-1}$, the inequality $\mathbf{v}\preccurlyeq\mathbf{w}$ fails in $(T_n,\cdot,\le)$. By Lemma~\ref{lem:restate}, there exists a language  of the form \eqref{eq:language} that contains $\mathbf{v}$ but omits $\mathbf{w}$. Let $K_1,\dots,K_t$ be all languages of the form \eqref{eq:language} that omit a word in $q\eta^{-1}$ (and hence omit every word in $q\eta^{-1}$). Then the language $\bigcup_{r=1}^t{K}_r$ is disjoint with $q\eta^{-1}$ and contains every word in $\bigcap_{p=1}^sL_p\setminus q\eta^{-1}$. Using de Morgan law, we conclude that
\[
q\eta^{-1}=\bigcap_{p=1}^sL_p\setminus\bigcup_{r=1}^t{K}_r=\bigcap_{p=1}^sL_p\cap\bigcap_{r=1}^t\overline{K}_r,
\]
where $\overline{K}_r$ denotes the complement of the language $K_r$. Since the class $\mathcal{F}_n$ is closed under finite intersections and complementation, we obtain $q\eta^{-1}\in\mathcal{F}_n$.
\end{proof}

From the definition a pseudovariety, one readily sees that $\mathbb{T}=\bigcup_n\mathbb{T}_n$. Therefore, one can view Proposition~\ref{prop:local} as a refinement of the quoted result by Pin and Straubing, and moreover, Proposition~\ref{prop:pinstraubing} is an immediate consequence of Proposition~\ref{prop:local}. However, it is fair to say that our arguments in the proof of Theorem~\ref{thm:ais-inequality} stem from \cite{PiSt85}.

We have presented Proposition~\ref{prop:local} as a combinatorial characterization of languages defined algebraically as those recognized by monoids in the pseudovariety $\mathbb{T}_n$. From the viewpoint of language theory, it may also be regarded as an algebraic characterization of the class of all Boolean combinations of languages of the form \eqref{eq:language}, thereby providing a solution to a problem posed by Kl\'ima and Pol\'ak in \cite[Remark~4]{KP10}.

Several authors modified the Eilenberg correspondence between classes of recognizable languages and pseudovarieties of finite monoids by relaxing conditions imposed on language classes while enriching monoids with extra relations and/or operations. In particular, Pin~\cite{Pi95} and Pol\'ak~\cite{Po01} developed Eilenberg-type correspondences with ordered monoids and, respectively, \ais{}s on the algebraic side. Using Theorem~\ref{thm:ais-inequality}  allows one to characterize languages recognized by ordered monoids and \ais{} from the pseudovarieties of ordered monoids and \ais{}s generated by the ordered monoid $(T_n,\cdot,\le)$ and, respectively, the \ais{} $(T_n,+,\cdot)$ for each $n=1,2,\dotsc$. The characterizations are similar to that of Proposition~\ref{prop:local} so we omit their formulations.

\subsection*{Acknowledgement} The author is grateful to the anonymous referee for their feedback and helpful suggestions.

\small


\begin{thebibliography}{11}
\bibitem{AVG09}
Almeida, J., Volkov, M.V., Goldberg, S.V. Complexity of the identity checking problem for finite semigroups. Zap. Nauchn. Sem. POMI \textbf{358}, 5--22 (2008) [Russian; Engl.\ translation J. Math. Sci. \textbf{158}(5), 605--614 (2009)]

\bibitem{BHR84}
Bloniarz, P.A., Hunt III, H.B., Rosenkrantz, D.J.: Algebraic structures with hard equivalence and minimization problems. J. Assoc. Comput. Mach. \textbf{31}, 879--904 (1984)

\bibitem{BuSa81}
Burris, S., Sankappanavar, H.P.: A Course in Universal Algebra. Springer, Berlin, Heidelberg, New York (1981)

\bibitem{CHLS16}
Chen, Y.Z., Hu, X., Luo, Y.F., Sapir, O.: The finite basis problem for the monoid of two-by-two upper triangular tropical matrices. Bull. Aust. Math. Soc. \textbf{94}, 54--64 (2016)

\bibitem{DJK18}
Daviaud, L., Johnson, M., Kambites, M.: {Identities in upper triangular tropical matrix semigroups and the bicyclic monoid}. J. Algebra \textbf{501}, 503--525 (2018)

\bibitem{Do09}
Dolinka, I.: A class of inherently nonfinitely based semirings. Algebra Universalis \textbf{60}, 19--35 (2009)

\bibitem{Ei76}
Eilenberg, S.: Automata, Languages and Machines, Vol.\,B. Academic Press, New York (1976)

\bibitem{GSV25}
Gusev, S.V., Sapir, O.B., Volkov, M.V.: Strongly nonfinitely based monoids. J. Combinatorial Algebra \textbf{9}(1-2), 129--144 (2025)

\bibitem{GV25}
Gusev, S.V., Volkov, M.V.: The finite basis problem for semirings of triangular Boolean matrtices. In preparation.

\bibitem{HZL21}
Han, B.B., Zhang, W.T., Luo, Y.F.: Equational theories of upper triangular tropical matrix semigroups. Algebra Universalis \textbf{82}, article no. 44 (2021)

\bibitem{JRZ22}
Jackson, M., Ren, M., Zhao, X.Z.: Nonfinitely based ai-semirings with finitely based semigroup reducts. J. Algebra \textbf{611}, 211--245 (2022)

\bibitem{JZ21}
Jackson, M., Zhang, W.T.: From $A$ to $B$ to $Z$. Semigroup Forum \textbf{103}, 165--190 (2021)

\bibitem{JT19}
Johnson, M., Tran, N.M.: Geometry and algorithms for upper triangular tropical matrix identities. J. Algebra \textbf{530}, 470--507 (2019)

\bibitem{Ka72}
Karp, R.M.: Reducibilty among combinatorial problems.  In: Miller, R.E., Thatcher, J.W., Bohlinger, J.D. (eds.), Complexity of Computer Computations, pp. 85--103. Springer, Boston (1972)

\bibitem{KS95}
Kharlampovich, O.G., Sapir, M.V.: Algorithmic problems in varieties. Int. J. Algebra Comput. \textbf{5}(4-5), 379--602 (1995)

\bibitem{Kl09}
Kl\'{\i}ma, O.: Complexity issues of checking identities in finite monoids. Semigroup Forum \textbf{79}, 435--444 (2009)

\bibitem{KP10}
Kl\'ima, O., Pol\'ak, L.: Hierarchies of piecewise testable languages. Int. J. Found. Comput. Sci. \textbf{21}(4), 517--533  (2010)

\bibitem{LL11}
Li, J.R., Luo, Y.F.: On the finite basis problem for the monoids of triangular boolean matrices. Algebra Universalis \textbf{65}, 353--362 (2011)

\bibitem{Pa94}
Papadimitriou, C.H.: {Computational Complexity}. Addison-Wesley, Reading, MA  (1994)

\bibitem{Pi86}
Pin, J.-\'E.: Vari\'et\'es de Langages Formels. Masson, Paris (1984) [French; Engl.\ translation: Varieties of Formal Languages. North Oxford Academic, London (1986) and Plenum, New York (1986)]

\bibitem{Pi95}
Pin, J.-\'E.: A variety theorem without complementation. Izv. Vyssh. Uchebn. Zav. Mat. 1, 80--90 (1995) [Russian; Engl.\ translation Russian Math. (Iz. VUZ) \textbf{39}(1), 74--83 (1995)]

\bibitem{PiSt85}
Pin, J.-\'E., Straubing, H.: Monoids of upper triangular matrices. In: Poll\'ak, Gy.,\ Schwarz, \v{S}t., Steinfeld, O. (eds), Semigroups. Structure and Universal Algebraic Problems. Colloquia Mathematica Societatis
J\'anos Bolyai, vol. 39, pp. 259--272. North-Holland, Amsterdam (1985)

\bibitem{Po01}
Pol\'ak, L.: Syntactic semiring of a language. In: Sgall, J., Pultr, A., Kolman, P. (eds.), Mathematical Foundations of Computer Science 2001. MFCS 2001. Lecture Notes in Computer Science, vol. 2136, pp. 611--620. Springer, Berlin, Heidelberg (2001)

\bibitem{Sa87}
Sapir, M.V.: {Problems of Burnside type and the finite basis property in varieties of semigroups}. Izv. Akad. Nauk SSSR, Ser. Mat. \textbf{51}, 319--340 (1987) [Russian; Engl. translation Math. USSR--Izv. \textbf{30}, 295--314 (1988)]

\bibitem{Se05}
Seif, S.: The Perkins semigroup has co-NP-complete term-equivalence problem. Int. J. Algebra Comput. \textbf{15}(2), 317--326 (2005)

\bibitem{Vo04}
Volkov, M.V.: Reflexive relations, extensive transformations and piecewise testable languages of a given height.  Int. J. Algebra Comput. \textbf{14}(5-6),  817--827 (2004)

\bibitem{Vo:embedding}
Volkov, M.V.: A new Boolean matrix representation for Catalan semirings. Preprint arXiv:2510.05045 (2025). \url{https://doi.org/10.48550/arXiv.2510.05045}


\bibitem{VG03}
Volkov, M.V., Goldberg, I.A.: Identities of semigroups of triangular matrices over finite fields. Mat. Zametki \textbf{73}(4), 502--510 (2003) [Russian; Engl. translation Math. Notes \textbf{73}(4), 474--481 (2003)]

\bibitem{VG04}
Volkov, M.V., Goldberg, I.A.: The finite basis problems for monoids of triangular boolean matrices. In: Algebraic Systems, Formal Languages, and Conventional and Unconventional Computation Theory. RIMS Kokyuroku, vol. 1366, pp. 205--214. Kyoto University, Kyoto (2004)

\bibitem{ZhLL12}
Zhang, W.T., Li, J.R., Luo, Y.F.: On the variety generated by the monoid of triangular $2\times2$ matrices over a two-element field. Bull. Aust. Math. Soc. \textbf{86}(1), 64--77 (2012)
\end{thebibliography}
\end{document}